\documentclass{amsart}
\usepackage{amsthm,amssymb,amsmath,enumerate,graphicx, tikz}
\usepackage{amscd}
\usepackage{setspace}
\usepackage{comment}
\usepackage{hyperref}
\pagestyle{plain}

\newtheorem{theorem}{Theorem}[section]

\newtheorem* {theorem*}{Theorem} 
\newtheorem{proposition}[theorem]{Proposition}

\newtheorem{lemma}[theorem]{Lemma}

\newtheorem{observation}[theorem]{Observation}

\newtheorem{corollary}[theorem]{Corollary}
\newtheorem{claim}[theorem]{Claim}  
\newtheorem{conjecture}[theorem]{Conjecture}
\theoremstyle{definition}
\newtheorem{definition}[theorem]{Definition}

\theoremstyle{remark}
\newtheorem{remark}[theorem]{Remark}
\newtheorem{example}[theorem]{Example}

\newcommand{\ctau}{\bar{\tau}}

\newcommand{\cnu}{\bar{\nu}}

\newcommand{\rvp}{\mathbb{R}^V_{\ge 0}}

\newcommand{\vh}{\vec{w}}
\newcommand{\vz}{\vec{z}}
\newcommand{\vw}{\vec{w}}

\newcommand{\vone}{\vec{1}}

\newcommand{\cp}{\mathcal{P}}
\newcommand{\D}{\mathcal{D}}
\newcommand{\G}{\mathcal{G}}
\newcommand{\etabar}{\bar{\eta}}

\newcommand{\K}{\mathcal{K}}

\newcommand{\ch}{\mathcal{H}}
\newcommand{\cm}{\mathcal{M}}

\newcommand{\vOne}{\vec{1}}

\newcommand{\C}{\mathcal{C}}

\newcommand{\EL}{\mathcal{L}}

\newcommand{\I}{\mathcal{I}}

\newcommand{\M}{\mathcal{M}}

\newcommand{\A}{\mathcal{A}}
\newcommand{\B}{\mathcal{B}}
\newcommand{\N}{\mathcal{N}}
\newcommand{\ck}{\mathcal{K}}

\newcommand{\R}{\mathbb{R}}

\newcommand{\T}{\mathcal{T}}

\newcommand{\vx}{\vec{x}}

\newcommand{\cT}{\Delta_\eta}
\newcommand{\E}{\Delta_r}

\newcommand{\refT}[1]{Theorem~\ref{#1}}
\newcommand{\refC}[1]{Corollary~\ref{#1}}

\newcommand{\refL}[1]{Lemma~\ref{#1}}
\newcommand{\refR}[1]{Remark~\ref{#1}}
\newcommand{\refS}[1]{Section~\ref{#1}}

\newcommand{\refD}[1]{Definition~\ref{#1}}
\newcommand{\refE}[1]{Example~\ref{#1}}

\newcommand{\refApp}[1]{Appendix~\ref{#1}}
\newcommand{\refCon}[1]{Conjecture~\ref{#1}}

\newcommand{\refOb}[1]{Observation~\ref{#1}}

\newcommand{\refProp}[1]{Proposition~\ref{#1}}

\usepackage{graphicx} 

\title{Coloring, list coloring, and fractional coloring in intersections of matroids}
\author{Ron Aharoni}
\address{Faculty of Mathematics, Technion, Haifa 32000, Israel}
\email{raharoni@gmail.com}
\author{Eli Berger}
\address{Department of Mathematics, University of Haifa, Haifa 31905,  Israel}
\email{berger.haifa@gmail.com}
\author{He Guo}
\address{Department of Mathematics and Mathematical Statistics, Ume\r{a} University, Ume\r{a} 90187, Sweden.}
\email{he.guo@umu.se}
\author{Dani Kotlar}
\address{Computer Science Department, Tel-Hai College, Upper Galilee 12210, Israel}
\email{dannykot@telhai.ac.il}
\date{July 2024; Revised in April, 2025}

\begin{document}

	\begin{abstract}
		It is known that in matroids the difference between the chromatic number and the fractional chromatic number is smaller than $1$, and that the list chromatic number is equal to the chromatic number. We investigate the gap within these pairs of parameters for hypergraphs that are the intersection of a given number $k$ of matroids. 
		We prove that in such hypergraphs the list chromatic number is at most $k$ times the chromatic number and at most  $2k-1$ times the maximum chromatic number among  the~$k$ matroids. We study the relationship between three polytopes associated with $k$-sets of matroids, and connect them to bounds on the fractional chromatic number of the intersection of the members of the $k$-set.  This also connects to bounds on the matroidal matching and covering number of the intersection of the members of the $k$-set.
		The tools used are in part topological.
	\end{abstract}
	\maketitle

	\section{Preliminaries}
	\subsection{Hypergraphs and complexes}

	A \emph{hypergraph}~$\ch$ is a collection of subsets, called {\em edges}, of its {\em vertex set} (or {\em ground set}), a finite set~$V=V(\ch)$. 
	Throughout the paper we assume that there is no isolated vertex in~$\ch$, i.e., every vertex $v\in V(\ch)$ belongs to some edge of~$\ch$. 
	If all edges of a hypergraph~$\ch$ are of the same size $k$ we say that $\ch$ is $k$-{\em uniform}, or that it is a {\em $k$-graph}.
	A hypergraph~$\ch$ is \emph{$k$-partite} if its vertex set can be divided into~$k$ parts~$V_1,\dots, V_k$ so that for every edge $S\in\ch$, $|S\cap V_i|=1$ for each $1\le i\le k$.
	For $U\subseteq V$, $\ch[U]=\{S\in \ch\mid S\subseteq U\}$ is the \emph{subhypergraph} of~$\ch$ \emph{induced} on $U$.

	A {\em matching} in a hypergraph is a set of disjoint edges, and a {\em cover} is a set of vertices meeting all its edges. The set of matchings in~$\ch$ is denoted by~$\M(\ch)$. The maximum size of a matching in~$\ch$ is denoted by~$\nu(\ch)$, and the minimum size of a cover by~$\tau(\ch)$. Obviously, $\tau(\ch) \ge \nu(\ch)$. 
	
	A set of vertices of $\ch$ is said to be {\em independent} if it does not contain an edge. The complex of independent sets in $\ch$ is denoted by $\I(\ch)$. 
	Clearly, both~$\M(\ch)$ and~$\I(\ch)$ are non-empty (both include the empty set) and closed under taking subsets. Hypergraphs satisfying these two conditions are called 
	{\em (abstract simplicial) complexes}, and their edges are also called {\em faces}. 
	This terminology is borrowed from topology. 
	
	We assume (except for one explicit deviation --- Definition \ref{join} of the ``join'' below) that all complexes have the same set, denoted by~$V$, as their ground set.  
	
	For a complex~$\C$ and $U\subseteq V$, let $rank_{\C}(U)=\max_{S\in \C[U]}|S|$. The \emph{rank} of~$\C$, denoted by~$rank(\C)$, is $rank_{\C}(V)$.
	
	A complex $\C$ is said to be a {\em flag complex} if it is $2$-determined, meaning that $e \in \C$ whenever $\binom{e}{2} \subseteq \C$. (Note that $\binom{S}{m}=\{T \subseteq S \mid |T|=m\}$.)

	\begin{definition}\label{join}
		The {\em join} $\C*\D$ of two complexes on disjoint ground sets is  $\{A \cup B \mid A \in \C, B \in \D\}$. If $V(\C) \cap V(\D)\neq \emptyset$, we define $\C*\D$ by first making a copy of~$\D$ on a ground set that is disjoint from that of~$\C$, and then taking the join of~$\C$ and the copy of~$\D$. 
	\end{definition}
	
	\subsection{Colorings, list colorings, and fractional colorings}
	
	Given a complex~$\C$, a \emph{coloring} by $\C$ is a set of faces of~$\C$ whose union is the ground set~$V$.
	The \emph{chromatic number} $\chi(\C)$ of~$\C$ is the minimum size (number of faces) of a coloring. 

	Let~$L_v$ be a set of {\em permissible colors} at every~$v \in V$. A {\em  list coloring} with respect to these is a function $f:V\rightarrow \cup_{v\in V}L_v$ satisfying $f(v)\in L_v$ for every~$v\in V$. It is said to be $\C$-{\em respecting} if $f^{-1}(c) \in \C$ for every color $c\in \cup_{v\in V}L_v$.
	The list chromatic number $\chi_\ell(\C)$ is the minimum number $p$ such that any system of lists $L_v$ of size $p$ has a $\C$-respecting list coloring. 
	
	Obviously, $\chi_\ell(\C) \ge \chi(\C)$. 
	
	The second main concept we shall study is that of fractional colorings.  
	
	\begin{definition}\label{def:fraccoloring}
		Given a complex~$\C$ on~$V$ and~$\vh\in\rvp$, a \emph{$\vh$-fractional coloring} of~$\C$ is a function~$f:\C\rightarrow\mathbb{R}_{\ge 0}$ satisfying $\sum_{S\in \C: v\in S}f(S)\ge w(v)$ for every $v \in V$.  The \emph{fractional $\vh$-chromatic number}, denoted by $\chi^*(\C,\vh)$, is the minimum of $\sum_{S\in\C}f(S)$ over all $\vh$-fractional colorings~$f$ of~$\C$.
		A $\vone$-fractional coloring is plainly called a \emph{fractional coloring}, and  
		$\chi^*(\C,\vone)$ is denoted by 
		$\chi^*(\C)$. 
	\end{definition}

	\subsection{Matroids}
	
	A complex $\M$ is called a {\em matroid} if  for all $S,T\in \M$ satisfying $|S|<|T|$ there exists $v\in T\setminus S$ such that $S\cup\{v\}\in \M$. 
	The edges of a matroid are said to be {\em independent}.  A \emph{base} of a matroid is a maximal independent set. A \emph{circuit}  
	is a minimal dependent set (the name comes from graphic matroids, namely matroids consisting of acyclic sets of edges in a graph). This is compatible with the terminology of independent sets in hypergraphs, once we note that $\M=\I(\ch)$ where $\ch$ is the set of circuits.
	
	For $A\subseteq V(\M)$, let
	\[span_{\M}(A)=A\cup\Big\{x\in V\mid \{x\}\cup S\not\in\M \text{ for some $S\in \M[A]$}\Big\}.\]
	$A\subseteq V$ is called {\em spanning} if $span_\M(A)=V$.

	Throughout the paper we assume that all matroids are loopless, namely all singletons are independent.

	Given a partition $\cp=(P_1, P_2, \ldots ,P_{m})$ of ~$V$,  let $\T(\cp)$ be the set of all subsets of~$V$ meeting each $P_i$ for $1\le i\le m$ in at most one vertex.~$\T(\cp)$ is called {\em partition matroid}.

	In a seminal paper \cite{ed1970},  Edmonds showed how combinatorial duality (min-max results)  can sometimes be formulated in terms of  the intersection of two matroids. The classical case is the K\"onig--Hall theorem, which can be viewed as a statement on the intersection of two partition matroids, defined on the edge set of a bipartite graph. The parts in each are the stars in one of the sides. A matching in this graph is a set of edges belonging to the intersection of the two matroids, and a marriage is a base in one matroid that is independent in the other.   
	
	Intersections of more than two  matroids  are more complex: $\min - \max$ results are no longer available, algorithms for finding maximum size sets in the intersection are no longer polynomial, and necessary and sufficient conditions for the existence of certain objects are replaced by sufficient conditions. 
	
	Let $\D^k$ be the collection of sets $\EL=\{\M_1, \ldots, \M_k\}$ of matroids. 
	Let $MINT_k=\{\bigcap \EL \mid \EL \in\D^k\}$, namely the set of complexes that are the intersection of $k$ matroids on the same ground set.

	The topic of the paper is properties of complexes belonging to $MINT_k$. Though possibly familiar, it is worthwhile mentioning the primary facts about such objects. The first, that was proved more than once, is that every complex is in some $MINT_k$.

	\begin{proposition}\label{prop:every}\cite{Kashiwabara, fekete, kortehausman}
		Every complex is  the intersection of matroids.
	\end{proposition}
	\begin{proof} 
		For $e\subseteq V$, let $\M_e= \{S \subseteq  V \mid S \not \supseteq e\}$. $\M_e$ is easily seen to be a matroid, and $\C=\bigcap_{e \not \in \C}\M_e$.
	\end{proof}
	
	In a beautiful M.Sc thesis~\cite{Imolaythesis}, Andr\'as Imolay addressed the question of how many matroids are needed in the intersection. Let~$\kappa(n)$ be the maximum, over all complexes~$\C$ on~$n$ vertices, of the minimum number of matroids whose intersection is~$\C$. 
	\begin{theorem}[Theorem 6.7 in~\cite{Imolaythesis}]
		$\binom{n-1}{\lfloor (n-1)/2\rfloor} \le \kappa(n) \le \binom{n}{\lfloor n/2\rfloor}$.
	\end{theorem}

	A special role will be played by systems of partition matroids. Given a $k$-partite hypergraph~$\ch$, we construct a collection $\EL=\{\M_1,\dots,\M_k \}$ of~$k$ partition matroids, each having ~$E(\ch)$  as ground set, where each parts of~$\M_i$ is a star $S_x$, namely the set of edges incident to a vertex $x$ in the~$i$th side of~$\ch$. This system of  matroids is denoted  by  $\EL(\ch)$. 
	
	Conversely,  let~$\EL = \{\M_1, \M_2, \ldots ,\M_k\}$ be a system of partition matroids, defined by the partitions $\cp^i=(P^i_1, P^i_2, \ldots ,P^i_{m_i})$ of the ground set~$V$ for $i\in [k]$.  Then the complex $\cap_{i=1}^k\M_i$ is the matching complex of a $k$-partite $k$-uniform hypergraph $\ck=\ck(\EL)$, whose vertices are the parts $P^i_j$, and each of its sides $S_i$ is $\{P^i_j\mid 1\le j \le m_i\}$. Every vertex $v\in V$ corresponds to an edge $e(v)=\{P^i_j \mid 1\le i\le k, v \in P^i_j\}$ of~$\ck$. Then $\cap_{i=1}^k \M_i = \M(\ck)$, the set of matchings in~$\ck$. 
	Clearly, 
	\begin{equation}\label{eq:KL}
		\EL(\K(\EL))=\EL\quad \text{ and }\quad \K(\EL(\K))=\K.
	\end{equation}

	These constructions are useful in characterizing intersections of partition matroids.

	\begin{proposition}\label{prop:four_equiv}
		The following conditions are equivalent:

		\begin{enumerate}[(i)]
			\item \label{eq:statement1} $\C=\I(G)$ for some graph $G$, 
			\item \label{eq:statement2} $\C$ is a flag complex,
			\item \label{eq:statement3} $\C$ is the intersection of $k$ partition matroids for some~$k$,
			\item  \label{eq:statement4} $\C$ is the matching complex of a $k$-partite hypergraph for some~$k$. 
		\end{enumerate}
		Moreover,~$\eqref{eq:statement3}$ and $\eqref{eq:statement4}$ are equivalent for each~$k$ separately. 
	\end{proposition}
	\begin{proof}
		$\eqref{eq:statement1} \Rightarrow \eqref{eq:statement2}$ is true by the definition of an independent set as a set in which each pair of elements is independent.
		To prove $\eqref{eq:statement2} \Rightarrow \eqref{eq:statement1}$ note that if~$\C$ is a flag complex then $\C=\I(G)$ for the graph whose edge set is $\{xy \mid xy \not \in \C\}$. $\eqref{eq:statement4} \Rightarrow \eqref{eq:statement1}$ is true since the matching complex~$M(H)$ of a hypergraph~$H$ is~$\I(L(H))$, where~$L(H)$ is the line graph of the hypergraph~$H$. 
		$\eqref{eq:statement3} \Leftrightarrow \eqref{eq:statement4}$ follows from~\eqref{eq:KL}. To prove $\eqref{eq:statement2} \Rightarrow \eqref{eq:statement3}$ note that if $|e|=2$ then the matroid~$\M_e$ in the proof of~\refProp{prop:every} is a partition matroid, with parts~$e$ and all singletons disjoint from~$e$.
	\end{proof}

	\subsection{Polytopes}
	Another viewpoint on the intersection of matroids, introdcued by Edmonds, is that  of polytopes, which are particularly useful in studying fractional colorings. 
	
	For a subset~$A$ of~$V$ let $\mathbf{1}_A \in \mathbb R^V$ be the \emph{characteristic function} of~$A$, namely the function taking value~$1$ on elements of~$A$ and~$0$ elsewhere.
	Functions will also be viewed as vectors, so a real-valued function $f$ on a set~$V$ is also denoted by $\vec{f}\in \mathbb{R}^V$. We write~$f[V]$ for $\sum_{v\in V}f(v)$.
	
	The polytope $P(\C)$ of a complex $\C$ on~$V$
	is the convex hull in~$\rvp$ of the characteristic vectors of the edges of $\C$. 
	
	A  polytope $Z\subseteq \rvp$ is \emph{closed down} if~$z\in Z$ and $0\le y\le z$ imply  $y\in Z$.
	\begin{observation}\label{ob:PCcloseddown}
		$P(\C)$ is closed down. 
	\end{observation}
	\begin{proof}
		Let     
		\begin{equation}\label{eq:closeddown}
			\vec{v} =\sum_{S\in\C} \lambda_S \mathbf{1}_{S} \in P(\C),
		\end{equation}
		where $\lambda_S\ge 0$ and $\sum_{S\in \C}\lambda_S=1$.
		Let $\vec{0}  \le \vec{u} \le  \vec{v}$. We claim that $\vec{u} \in P(\C)$.   Using induction, it suffices to consider the case that $\vec{u}$ and $\vec{v}$ differ just in one coordinate, namely $\vec{u}=\vec{v}-\alpha e_x$ for some $x \in V$. Furthermore, by the convexity of~$P(\C)$, it is enough to prove the case $\vec{u}(x)=0$. Since $\C$ is a complex, we can replace all $S$'s containing ~$x$ in~\eqref{eq:closeddown} by $S\setminus\{x\}$  to obtain $\vec{u}$.    
	\end{proof}

	\subsection{Matroids intersection vs. matroidal cooperative covers} 
	Given a $k$-set of matroids~$\EL=\{\M_1, \ldots ,\M_k\}\in \D^k$, let $\cnu(\EL)=rank(\bigcap \EL)$ and 
	$\ctau(\EL)=\min\{\sum_{i=1}^k rank_{\M_i}(V_i) \mid 
	\cup_{i=1}^k V_i=V\}$.
	A set belonging to $\bigcap \EL$ is called a \emph{matoridal matching} of~$\EL$, and a $k$-tuple of functions $(\mathbf{1}_{V_1}, \ldots,\mathbf{1}_{V_k})$ for which $\cup_{i=1}^k span_{\M_i}(V_i)=V$ is called a \emph{matroidal cooperative cover}. This is the reason for the notation~$\bar{\nu}$ and~$\bar{\tau}$ (see~\refS{sec:weightedmatroid} for more details).

	Given a maximum size set~$X$ in~$\bigcap \EL$ and any representation of~$V$ as $\cup_{i=1}^k V_i$ we have 
	\[ |X|=|X\cap(\cup_{i=1}^k)V_i|\le \sum_{i=1}^k|X\cap V_i|\le \sum_{i=1}^k rank_{\M_i}(V_i).  \]
	This shows that 
	\begin{equation}
		\cnu(\EL)\le   \ctau(\EL).
	\end{equation}

	Edmonds' famous two-matroids intersection theorem is that  for $k=2$ equality holds.
	\begin{theorem}[\cite{ed1970}]\label{thm:edmondstwomatroids}
		If $k=2$ then $\ctau(\EL)=\cnu(\EL)$.
	\end{theorem}
	We shall later prove that equality holds also for fractional versions of~$\cnu$ and~$\ctau$, for every $k$. It is not hard to prove that $\ctau(\EL) \le k\cnu(\EL)$ --- we shall return to variants of this inequality, and also to the basic conjecture in the field, of which Edmonds' theorem is a special case.  
	\begin{conjecture}[Conjecture 7.1 in~\cite{matcomp}]\label{conj:matroidalryser}
		$\ctau(\EL) \le (k-1)\cnu(\EL)$.
	\end{conjecture}

	\cite[Theorem 7.3]{matcomp} yields the conjecture for $k=3$.

	If $\EL\in \D^k$ is a collection of partition matroids and $H=\K(\EL)$, which is a $k$-partite hypergraph, then $\cnu(\EL)=\nu(H)$ and $\ctau(\EL)=\tau(H)$ (\refOb{ob:cnu=nupartition}). 
	Thus a conjecture of Ryser, stating that in $k$-partite hypergraphs~$\tau \le (k-1)\nu$, is a special case of~\refCon{conj:matroidalryser}.

	
	

	

	\section{Preview}
	The unifying theme of the paper is that belonging to $MINT_k$ entails quantifiably ``tame'' behavior of the complex. This is manifested in two ways. One, studied in Sections~\ref{sec:toptools}--\ref{sec:main2}, 
	is that the list chromatic number is not far from the ordinary chromatic number. This is proved via an observation, that a topological tool used to bound the chromatic number works just as well for the list chromatic number. This leads to two results. One (\refT{thm:chiellCkchiC}) is that if $\C \in MINT_k$
	then 
	\begin{equation*}
		\chi_\ell(\C) \le k\chi(\C).
	\end{equation*}
	This solves a conjecture posed by Kir{\'a}ly~\cite{kiralyk} and also by B\'{e}rczi,~Schwarcz, and~Yamaguchi~\cite{berczi}. The other result (\refC{cor:chiell2kchi}) is that if $\C=\cap_{i=1}^k\M_i \in MINT_k$, then 
	\begin{equation*}
		\chi_\ell (\C) \le (2k-1)\max_{1\le i\le k} \chi(\M_i).
	\end{equation*}
	The proof of the latter requires new topological tools.    
	If all matroids $\M_i$ are partition matroids then the $2k-1$ factor can be replaced by~$k$ (\refC{cor:listcolk=2}), i.e.,
	\begin{equation*}
		\chi_\ell (\C) \le k\max_{1\le i\le k} \chi(\M_i).
	\end{equation*}
	The latter is probably true (and also not sharp) for general matroids.

	\refS{sec:fractionalcoloring}  and on deal with another aspect of the ``tame-ness'' of intersections of matroids --- the behavior of fractional colorings. The motivation comes from two theorems of Edmonds on the intersection of two matroids. One is \refT{thm:edmondstwomatroids}.    The other, closely related, is that for any two matroids, $\M$ and $\N$, the following holds.
	\begin{theorem}[\cite{edmonds1979polytope}]\label{thm:edmonds2matroidintersection}
		$P(\M \cap \N)=P(\M) \cap P(\N)$. 
	\end{theorem}

	An obvious challenge is to extend this theorem to any number of matroids. 
	In \refS{sec:weightedmatroid} we define two notions: 
	matchings and cooperative covers, as well as their fractional and  weighted versions, for $k$-sets of matroids. We prove (\refT{thm:nu*w=tau*w}) that the fractional weighted cooperative covering number equals the  
	fractional weighted matching number for any $k$-set of matroids.

	Following the footsteps of a  result of F\"uredi~\cite{furedi1981maximum} and its weighted version     proved by  F\"uredi, Kahn and Seymour~\cite{furedikahnseymour}, we consider a matroidal fractional weighted Ryser-type conjecture (\refCon{conj:fksmatroid}).  We prove  its equivalence to the  following conjecture on polytopes.  
	
	\begin{conjecture}
		For $\C=\cap_{i=1}^k\M_i\in MINT_k$, 
		\[(k-1) P(\C) \supseteq \cap_{i=1}^k P(\M_i).\] 
	\end{conjecture}
	\refT{thm:edmonds2matroidintersection} is the  case  $k=2$.
	
	In ~\refS{sec:fractionalcoloring} we show an equivalence to  yet another conjecture.  
	\begin{conjecture}\label{conj:chi*kint}
		For $\C=\cap_{i=1}^k\M_i\in MINT_k$ and~$\vh\in\rvp$,
		\begin{equation*}
			\chi^*(\C,\vh) \le (k-1)\max_{1\le i\le k} \chi^*(\M_i,\vh).
		\end{equation*}
	\end{conjecture}
	Using known results, we prove these conjectures for $k \le 3$  and for partition matroids (see~\refS{subsec:partition}). For general~$k$ and general matroids, we prove these conjectures with~$k$ replacing~$k-1$ 
	(Corollaries~\ref{cor:kRPratio}--\ref{cor:kchi*}).
	
	\refS{sec:weightedfractional} is devoted to the study of the fractional $\vw$-chromatic number~$\chi(\C,\vw)$ for  general complexes~$\C$.
	
	The last section is devoted to a brief discussion of the  combination of the two main themes --- a fractional version of list coloring.

	\section{A topological tool}\label{sec:toptools}
	\subsection{$\C$-transversals}
	
	Hall's theorem is about the existence of injective choice functions. A more general  notion is that of $\C$-transversals. 
	Given a complex $\C$ and subsets~$S_i$ of~$V(\C)$ for $i \in I$, a $\C$-{\em transversal} is a choice  function $x_i \in S_i$ for~$i \in I$, whose image $\{x_i \mid i \in I\}$ is a face of~$\C$.
	
	This notion is particularly useful in two contexts. 
	
	\begin{enumerate}
		\item {\em Rainbow matchings}. A choice function whose domain is sets of hypergraph edges, and its image is a matching,  is called a \emph{rainbow matching}.  A special case is matchings in {\em bipartite hypergraphs}. A hypergraph  $\B$ is said to be bipartite if there is $A \subseteq V(\B)$ meeting each edge at precisely one vertex: each  $a \in A$ can be considered as a set $S_a$ of edges, those that complement $a$ to an edge of $\B$. A rainbow matching for this system is plainly a matching in~$\B$. 
		
		\item {\em Colorings}. A classic construction transforms any $k$-coloring of~$\C$ (namely a set of $k$ edges whose union is $V(\C)$),  to $\C$-transversals. Take the join $\D=*_{j\in [k]}\C$  of~$k$ copies 
		of~$\C$, and for every $v\in V$ let $S_v$ be the set of copies of $v$. Then a $\D$-transversal~$T$ is a cover of~$V$ by~$k$ faces of~$\C$ --- the $i$th face being the image of~$T$ in the $i$th copy of~$\C$. This is what we call ``$k$-coloring''.
	\end{enumerate}
	\begin{remark}
		The earliest reference we know to this construction is in Welsh's $1976$ book, ``Matroid theory''~\cite{welsh2010matroid}, where it is used to prove Edmonds' two matroids intersection theorem. But it may well be older than that.
	\end{remark}
	
	The study of $\C$-transversals requires a more general tool than  Hall's theorem. If~$\C$ is a matroid, then such a tool is given by a theorem of Rado~\cite{rado}. For general complexes, a topological tool has been developed. The basic theorem in this direction is that a $\C$-transversal exists if (but not only if) the union of any~$k$ sets~$S_i$ induces a complex of connectivity at least~$k$ (replacing ``being of size at least~$k$'' in Hall's theorem). We next define this notion. 
	
	\subsection{Connectivity}
	We denote by $S^k$  the $k$-dimensional sphere, and by $B^k$ is the $k$-dimensional ball. So, $S^k$ is the boundary of $B^{k+1}$.
	$B^0$ is a single point, and accordingly $S^{-1}=\emptyset$.

	A topological space~$X$ is \emph{homotopically $k$-connected} if 
	for every $-1\le i \le k$, 
	every continuous function from the $i$-dimensional sphere $S^i$ to $X$
	can be extended to a continuous function from the ball $B^{i+1}$ to $X$. If $X=\emptyset$, then it is \emph{homologically $(-1)$-connected}. If $X\neq \emptyset$, it is 
	\emph{homologically $k$-connected} if  $\tilde{H}_i(X) =0$ for all $0\le i\le k$, where $\tilde{H}_i(X)$ is the reduced $i$th homology group of~$X$. 
	
	Let $\eta(X)$ (resp. $\eta_H(X)$) be
	the maximum~$k$ for which $X$ is homotopically $k$-connected (resp. homologically  $k$-connected),  plus 2. 
	We shall generally not distinguish between the two notions, one reason being a result of Hurewicz.

	\begin{theorem}[Hurewicz, \cite{hatcher}]\label{etahbig}
		$\eta_H \ge \eta$. If $\eta(X)\ge 3$ (which signifies``$X$ is simply connected"), then $\eta(X)=\eta_H(X)$.
	\end{theorem}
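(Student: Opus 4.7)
The plan is to deduce both assertions from the classical Hurewicz theorem as presented in Hatcher, since the statement is essentially a repackaging of Hurewicz's result in the $+2$-shifted language of $\eta$ and $\eta_H$. Before invoking Hurewicz, I would spell out what the two definitions say in each small dimension, using the conventions $S^{-1}=\emptyset$ and $B^0 = \{\text{pt}\}$. A map $S^{-1}\to X$ extends to $B^0$ iff $X\neq \emptyset$, so homotopical $(-1)$-connectedness is the same as nonemptiness, which matches the declared boundary case of the homological notion. Similarly, homotopical $0$-connectedness is nonemptiness together with path-connectedness, which agrees with $\tilde H_0(X)=0$. For $k\ge 1$, homotopical $k$-connectedness is equivalent to $X$ being nonempty, path-connected, and having $\pi_i(X)=0$ for $1\le i\le k$.

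For the first inequality $\eta_H(X)\ge \eta(X)$, I would suppose $X$ is homotopically $k$-connected for some $k\ge 0$ and apply Hurewicz in the form: if $X$ is path-connected and $\pi_i(X)=0$ for all $i\le k$, then $\tilde H_i(X)=0$ for all $i\le k$. This is exactly the homological $k$-connectedness in the definition, so taking the supremum of $k$ on both sides and adding $2$ gives $\eta_H(X)\ge \eta(X)$. The empty case and the nonempty but disconnected case are handled by the boundary observations above.

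For the equality when $\eta(X)\ge 3$, the hypothesis translates to $X$ being simply connected, which is the precise regime where Hurewicz gives an isomorphism rather than only abelianization. I would then run an induction on dimension: assuming $X$ is simply connected and $\tilde H_i(X)=0$ for $0\le i\le k$ with $k\ge 1$, Hurewicz in degree~$2$ gives $\pi_2(X)\cong H_2(X)=0$, which upgrades the connectivity of $X$ by one; with $\pi_2=0$ in hand, Hurewicz in degree~$3$ gives $\pi_3(X)\cong H_3(X)=0$, and so on up to degree~$k$. Hence $X$ is homotopically $k$-connected, giving $\eta(X)\ge \eta_H(X)$, and combined with the first part, equality.

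The main obstacle is really just a bookkeeping one: the $+2$ convention in the definition of $\eta$ means one has to be careful about which small values of $k$ correspond to which hypotheses, and one has to verify that the degenerate cases (empty space, disconnected space, non-simply-connected space) match up on both sides of the proposed inequalities. The genuine content — Hurewicz's theorem itself — is black-boxed from Hatcher, as the statement explicitly advertises.
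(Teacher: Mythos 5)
Your derivation is correct, and it is exactly what the paper intends: the paper gives no proof of this statement at all, simply citing the Hurewicz theorem from Hatcher, and your argument is the standard unpacking of that citation (Hurewicz plus the $+2$ bookkeeping and the degenerate cases $\eta\in\{0,1,2\}$). Nothing further is needed.
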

	
	An abstract complex $\C$ has a geometric realization $||\C||$, obtained by positioning its vertices in general position in $\R^n$ for large enough $n$ ($n=2\cdot rank(\C)-1$ suffices, for example any rank-2 complex, namely a graph, can be realized without fortuitous intersections in $\mathbb R^3$) and representing every face by the convex hull of its vertices, the ``general position'' condition guaranteeing that there are no unwanted intersections. We write $\eta(\C)$ for $\eta(||\C||)$, and $\eta_H(\C)$ for $\eta_H(||\C||)$.

	Intuitively,~$\eta(X)$ is the minimum dimension of a hole in~$X$. For example, 
	$\eta(S^n)=n+1$ because the hole, namely the non-filled ball, is of dimension $n+1$. For the disk~$B^n$, $\eta(B^n)=\infty$, because there is no hole, meaning that images of $S^n$ can be filled for every $n$. $\eta(X)=0$ means $X=\emptyset$, and $\eta(X)=1$ means that~$X$ is non-empty and is not path-connected. 
	For another example, $\eta(\C)\ge 2$ means path-connectedness. The ``2'' is the size of the simplices that are used for filling those holes that are fillable. This is valid also for higher dimensions, which makes~$\eta$  a natural parameter in combinatorial settings.

	\subsection{Lower bounds on $\eta$.}
	To apply~\refT{thm:tophall} below, one needs combinatorially formulated lower bounds on~$\eta(\C)$. The following are the bounds used in this paper.
	
	\begin{proposition}\label{prop:etaHjoin}
		$\eta_H(\C*\D) =\eta_H(\C)+\eta_H(\D)$.
	\end{proposition}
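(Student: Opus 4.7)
The plan is to use the natural homeomorphism $\|\C * \D\| \cong \|\C\| * \|\D\|$ between the geometric realization of the simplicial join and the topological join of realizations, and then exploit the Künneth-type formula for reduced homology of a topological join:
\[
\tilde H_n(X * Y) \;\cong\; \bigoplus_{i+j=n-1} \tilde H_i(X) \otimes \tilde H_j(Y) \;\oplus\; \bigoplus_{i+j=n-2} \textup{Tor}\bigl(\tilde H_i(X), \tilde H_j(Y)\bigr).
\]
I would derive this by applying Mayer--Vietoris to the decomposition $X * Y = (CX \times Y) \cup (X \times CY)$, where $CX, CY$ denote the cones (so that $CX \times Y \simeq Y$ and $X \times CY \simeq X$), and then plugging the ordinary Künneth formula into the connecting maps; alternatively one can cite the identification $X * Y \simeq \Sigma(X \wedge Y)$ to shift the question to a smash product, with the same algebraic conclusion.

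For the lower bound $\eta_H(\C*\D) \ge \eta_H(\C) + \eta_H(\D)$, set $a = \eta_H(\C)$ and $b = \eta_H(\D)$, so that $\tilde H_i(\C) = 0$ for all $i \le a - 2$ and $\tilde H_j(\D) = 0$ for all $j \le b - 2$. For any $n \le a + b - 2$, every pair $(i,j)$ with $i + j \in \{n-1, n-2\}$ satisfies $i \le a - 2$ or $j \le b - 2$, so every tensor and every Tor summand on the right-hand side vanishes. Hence $\tilde H_n(\C * \D) = 0$ for all $n \le a + b - 2$, which gives $\eta_H(\C * \D) \ge a + b$.

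For the matching upper bound I would exhibit a non-vanishing summand in degree $a + b - 1$. By definition of $\eta_H$, $\tilde H_{a-1}(\C) \ne 0$ and $\tilde H_{b-1}(\D) \ne 0$; passing to a field of coefficients (or applying the universal coefficient theorem) the term $\tilde H_{a-1}(\C) \otimes \tilde H_{b-1}(\D)$ appearing in $\tilde H_{a+b-1}(\C * \D)$ is non-zero, which forces $\eta_H(\C * \D) \le a + b$. The main obstacle is precisely this second half: over $\mathbb Z$ one can have $\tilde H_{a-1}(\C) \otimes \tilde H_{b-1}(\D) = 0$ (for instance $\mathbb Z/2 \otimes \mathbb Z/3$), so the cleanest route is to work with field coefficients throughout, or to verify that ``homologically $k$-connected'' is insensitive to this choice because vanishing of integral $\tilde H_i$ is equivalent to vanishing with all field coefficients by universal coefficients.
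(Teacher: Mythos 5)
The paper states this proposition without proof, so there is no in-text argument to compare yours against; the route through the K\"unneth formula for joins (equivalently, $\|\C * \D\| \simeq \Sigma(\|\C\| \wedge \|\D\|)$ followed by the ordinary K\"unneth formula) is the standard one. Your proof of the inequality $\eta_H(\C*\D)\ge \eta_H(\C)+\eta_H(\D)$ is correct and complete: for $n\le a+b-2$ every index pair $(i,j)$ contributing to $\tilde H_n(\C*\D)$ has $i\le a-2$ or $j\le b-2$, so all tensor and Tor summands vanish. This is also the only direction the paper ever uses (via its homotopy analogue, Proposition~\ref{prop:etajoin}).

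The difficulty you flag in the reverse inequality is not cosmetic, and your second proposed repair does not work. Over $\mathbb Z$ the equality is genuinely false: take $\C$ a triangulation of a Moore space $M(\mathbb Z/2,\,a-1)$ and $\D$ one of $M(\mathbb Z/3,\,b-1)$ (for $a=b=2$, take $\C=\mathbb{RP}^2$). Every tensor summand $\tilde H_i(\C)\otimes\tilde H_j(\D)$ and every Tor summand then vanishes in every degree, so $\tilde H_*(\C*\D)=0$ and $\eta_H(\C*\D)=\infty$, while $\eta_H(\C)+\eta_H(\D)=a+b$. The observation that integral $i$-connectivity is equivalent to $i$-connectivity over all fields does not rescue the equality, because $\eta_H$ with integral coefficients is in effect the minimum over fields $F$ of the corresponding $F$-coefficient invariant, and the minimum of a sum is not the sum of the minima --- exactly what happens in the example, where $\C$ is invisible to $\mathbb F_3$ and $\D$ to $\mathbb F_2$. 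The clean statement, and the one your argument does prove in full, is the equality for homology with coefficients in a fixed field (there Tor disappears and a tensor product of nonzero vector spaces is nonzero); with the paper's integral definition of $\eta_H$ only the inequality ``$\ge$'' survives. Since that is the direction needed everywhere in the paper, this is a defect of the stated proposition rather than of your argument, but your write-up should either fix a field of coefficients at the outset or weaken the claim to an inequality.
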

	Homotopic~$\eta$ does not yield equality here, but an inequality. Luckily, the more useful of the two inequalities forming the equality. 
	\begin{proposition}\label{prop:etajoin}
		$\eta(\C*\D) \ge\eta(\C)+\eta(\D)$.
	\end{proposition}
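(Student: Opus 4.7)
The plan is to combine the homological join identity~\refProp{prop:etaHjoin} with Hurwitz's theorem~\refT{etahbig}: once $\C*\D$ is simply connected, Hurwitz promotes the homological bound to a homotopic one. The argument thus splits into (i) handling the small cases by hand, and (ii) verifying that $||\C*\D||$ is simply connected whenever $\eta(\C)+\eta(\D)\ge 3$ and both are positive.

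For the small cases: if $\eta(\C)=0$ then $||\C||$ is empty and $||\C*\D||=||\D||$, giving the bound immediately (symmetrically for $\D$). If $\eta(\C)=\eta(\D)=1$, both realizations are non-empty and the join of two non-empty spaces is automatically path-connected---any two points are joined through a point of the opposite factor---so $\eta(\C*\D)\ge 2=\eta(\C)+\eta(\D)$. In the main case, by swapping if necessary, I may assume $\eta(\C)\ge 2$, so $||\C||$ is path-connected. I would then cover $||\C*\D||$ by the two standard open neighborhoods of its two ``ends'', which deformation retract to $||\D||$ and $||\C||$, with intersection retracting to $||\C||\times||\D||$. When $||\D||$ is also path-connected, Van Kampen presents $\pi_1(||\C*\D||)$ as the pushout of the two projections from $\pi_1(||\C||)\times\pi_1(||\D||)$ onto its factors, which is trivial. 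When $||\D||$ has several path-components $D_i$, each $||\C||*D_i$ is simply connected by the previous case, and a further Van Kampen application glues them along the path-connected subspace $||\C||$ to give $\pi_1(||\C*\D||)=1$.

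With simple connectivity established, \refT{etahbig} gives $\eta(\C*\D)=\eta_H(\C*\D)$, and combining this with~\refProp{prop:etaHjoin} and the inequality $\eta_H\ge\eta$ from Hurwitz yields
\[\eta(\C*\D)=\eta_H(\C*\D)=\eta_H(\C)+\eta_H(\D)\ge \eta(\C)+\eta(\D).\]
The main obstacle is the simple-connectivity verification: the homotopic join formula cannot be deduced from the homological one alone, so a genuinely topological argument on the join is needed before Hurwitz can be invoked.
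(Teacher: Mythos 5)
The paper states Proposition~\ref{prop:etajoin} without proof, citing it as a known fact, so there is nothing to compare against line by line; judged on its own, your argument is correct and is the standard way to derive the homotopic join inequality from the homological one. The case split is exhaustive ($\eta(\C)=0$ forces $\|\C\|=\emptyset$ and $\C*\D=\D$; two non-path-connected non-empty factors still give a path-connected join; otherwise one factor is $1$-connected... rather, $0$-connected, and the sum is at least $3$), the Van Kampen computation is right (the pushout of the two projections out of $\pi_1(\|\C\|)\times\pi_1(\|\D\|)$ kills both factors, hence is trivial), and once $\|\C*\D\|$ is $1$-connected, \refT{etahbig} together with \refProp{prop:etaHjoin} and $\eta_H\ge\eta$ closes the argument, including the cases where one of the connectivities is infinite. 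Two points you should make explicit if you write this up in full: (i) in the disconnected-$\D$ step the pieces $\|\C\|*D_i$ are closed, not open, so you need to thicken them to open neighborhoods (or invoke the CW/cofibration form of Van Kampen) before gluing along the path-connected $\|\C\|$ --- routine for finite complexes, but it is where the hypotheses of the theorem actually live; and (ii) you are implicitly identifying $\|\C*\D\|$ with the topological join $\|\C\|*\|\D\|$, which holds for finite complexes and is worth a sentence. What your route buys is that the homotopic statement comes for free from the homological one plus a single elementary $\pi_1$ computation, rather than requiring an independent obstruction-theoretic or Milnor-style proof of join connectivity.
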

	
	See \cite{matcomp} for a reference.

	The connection of~$\eta$ to matroids is given by a result of Whitney~\cite{whitney}.
	
	\begin{proposition}\label{prop:etamatroid}
		If $\M$ is a matroid then $\eta(\M)\ge rank(\M)$, with equality holding unless $\M$ has a co-loop, namely an element belonging to all bases, in which case $\eta(\M)=\infty$.
	\end{proposition}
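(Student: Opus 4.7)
The plan is to split into two cases according to whether $\M$ contains a co-loop. For the co-loop case, let $v$ be a co-loop. For any $F \in \M$, extending $F$ to a base $B$ forces $v \in B$ (since every base contains $v$), so $F \cup \{v\} \subseteq B$ is independent. Hence $\M$ is a cone with apex $v$, so $\|\M\|$ is contractible and $\eta(\M) = \infty$.

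For the no-co-loop case, let $r = \rk(\M)$. I would prove the stronger statement that $\|\M\|$ is homotopy equivalent to a nonempty wedge of $(r-1)$-spheres; this immediately yields $\eta(\M) = r$, since such a wedge is $(r-2)$-connected but not $(r-1)$-connected. The cleanest route is the classical shellability result for matroid complexes due to Bj\"orner: after fixing a linear order on $V$, ordering the bases of $\M$ lexicographically yields a shelling of the pure $(r-1)$-dimensional complex $\M$, so $\|\M\|$ is homotopy equivalent to a wedge of $(r-1)$-spheres. To see the wedge is nonempty, I would check that the top-dimensional reduced homology is nonzero in the no-co-loop case, either via a direct Euler characteristic computation or by exhibiting an explicit non-bounding $(r-1)$-cycle built from a base together with fundamental exchanges guaranteed by the absence of co-loops.

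A more self-contained alternative would be induction on $r$ via a Mayer--Vietoris decomposition: pick a non-loop $v$ and write $\M = (\M \setminus v) \cup \operatorname{star}_\M(v)$, whose intersection is the link $\M/v$. Since $\operatorname{star}_\M(v)$ is a cone and thus contractible, the long exact sequence relates $\tilde{H}_i(\M)$ to the reduced homologies of $\M \setminus v$ and $\M/v$, both of strictly smaller rank, yielding $(r-2)$-connectedness in homology. \refT{etahbig} then upgrades this to homotopy connectivity for $r \ge 3$, while $r \le 2$ can be checked by hand. The main obstacle in this approach is that deletion and contraction need not preserve the no-co-loop property, so one must either choose $v$ carefully so that both $\M \setminus v$ and $\M/v$ remain co-loop-free, or treat separately the degenerate subcases in which a newly introduced co-loop forces the corresponding subcomplex to be a cone, simplifying the exact sequence accordingly.
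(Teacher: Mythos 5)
The paper offers no proof of this proposition at all --- it is invoked as a classical fact (attributed there to Whitney) --- so your argument is necessarily an independent one, and what you propose is essentially the standard modern proof. The coloop case is complete and correct: extending $F\in\M$ to a base forces the coloop $v$ into that base, so $F\cup\{v\}\in\M$ for every $F$, hence $\M$ is a cone with apex $v$, and $\eta(\M)=\infty$. The coloop-free case via Bj\"orner's lexicographic shelling of the pure $(r-1)$-dimensional complex $\M$ is also sound and gives $\|\M\|$ the homotopy type of a wedge of $(r-1)$-spheres, so $\eta(\M)\ge r$ with equality iff the wedge is nonempty. That nonemptiness, i.e.\ $\tilde H_{r-1}(\M)\neq 0$ in the absence of coloops, is exactly the content of the ``unless'' clause and is the one point you leave as a plan rather than a proof; it does need an argument, since the implication only goes one way in general (a shellable complex can be contractible). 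The cleanest completion in your framework: the number of spheres equals the number of bases of internal activity zero, i.e.\ $T_\M(0,1)=|\tilde\chi(\M)|$; a coloop is internally active in every base (its fundamental cocircuit is a singleton), which shows the count vanishes when a coloop exists, and a short deletion--contraction induction (or exhibiting the lexicographically last base, which has internal activity zero when $\M$ is coloop-free) shows the count is positive otherwise. Your Mayer--Vietoris alternative is also viable and is closer in spirit to the tools the paper actually uses (\refL{lemma:MV}, \refProp{prop:etajoin}, and \refT{etahbig} to pass from homology to homotopy when $r\ge 3$); as you note, one must handle coloops created by deletion or contraction, but since the offending subcomplexes are then cones the exact sequence degenerates harmlessly. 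Finally, the degenerate case $r=0$ ($\|\M\|=\emptyset$, $\eta=0$) should be recorded separately, consistent with the paper's convention $S^{-1}=\emptyset$.
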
 
	The last statement is true since the matroid can be contractible to the co-loop.  
	A generalization  proved in~\cite{matcomp} will play  a central role in our investigation.
	
	\begin{proposition}\label{prop:etarankk}
		If  $\C \in MINT_k$, then $\eta(\C) \ge \frac{rank(\C)}{k}$.
	\end{proposition}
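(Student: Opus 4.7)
The plan is an induction on $k$. The base case $k=1$ is exactly Proposition~\ref{prop:etamatroid}, which gives $\eta(\M_1)\ge\rk(\M_1)=\rk(\C)$.

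For the inductive step, set $\D=\bigcap_{i=1}^{k-1}\M_i\in MINT_{k-1}$, so $\C=\D\cap\M_k$, and invoke the central topological theorem of \cite{matcomp}, which bounds the connectivity of the intersection of an arbitrary complex $\D$ with a matroid $\M$ on a shared ground set. The natural form of that bound is
$$\eta(\D\cap\M)\;\ge\;\min_{F\subseteq V}\bigl(\eta(\D[F])+\rk_{\M}(V\setminus F)\bigr),$$
where $F$ ranges over a suitable family of subsets of $V$ (for instance, complements of $\M$-flats). Substituting the inductive hypothesis $\eta(\D[F])\ge \rk(\D[F])/(k-1)$ into this bound yields
$$\eta(\C)\;\ge\;\min_{F}\Bigl(\tfrac{\rk(\D[F])}{k-1}+\rk_{\M_k}(V\setminus F)\Bigr).$$

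The combinatorial step is the estimate $\rk(\D[F])+\rk_{\M_k}(V\setminus F)\ge\rk(\C)=:n$, which holds because any base $B$ of $\C$ splits as $(B\cap F)\sqcup(B\setminus F)$, with $B\cap F\in\D[F]$ and $B\setminus F$ independent in $\M_k$ restricted to $V\setminus F$. Writing $a=\rk(\D[F])$ and $b=\rk_{\M_k}(V\setminus F)$, so that $a+b\ge n$ and $a,b\ge 0$, the remaining arithmetic
$$\tfrac{a}{k-1}+b\;\ge\;\tfrac{n-b}{k-1}+b\;=\;\tfrac{n+b(k-2)}{k-1}\;\ge\;\tfrac{n}{k}$$
is routine: the final inequality rearranges to $kb(k-2)+n\ge 0$, which is valid for $k\ge 2$ and $b\ge 0$.

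The principal obstacle is pinning down the exact matroid-complex connectivity bound from \cite{matcomp}: one must confirm the index family (flats, spanning sets, or complements thereof) in such a form that the combinatorial estimate $a+b\ge n$ applies uniformly over the sets $F$ appearing in the minimum, and that the inductive hypothesis is being invoked on genuine restrictions of a complex in $MINT_{k-1}$. If the homotopy version of the bound turns out to be off by some small additive constant, one transfers the entire argument to the homological variant $\eta_H$, using Theorem~\ref{etahbig} to return to $\eta$ in the (simply connected) range relevant here, and Proposition~\ref{prop:etaHjoin} if at any step a join decomposition is needed.
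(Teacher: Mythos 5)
The paper does not actually prove this proposition---it quotes it from \cite{matcomp}---so the real question is whether your argument stands on its own, and it does not: the inequality on which your inductive step pivots is false as you have written it. Take $V=\{u,v,w\}$, let $\D$ be the partition matroid with parts $\{u,w\},\{v\}$ and $\M$ the partition matroid with parts $\{v,w\},\{u\}$ (this is exactly the paper's sharpness example $\mathcal K$ for $k=2$). Then $\D\cap\M$ consists of the edge $\{u,v\}$ together with the isolated vertex $w$, so $\eta(\D\cap\M)=1$. But $\min_{F\subseteq V}\big(\eta(\D[F])+\rk_{\M}(V\setminus F)\big)=2$: the only subsets $F$ for which $\eta(\D[F])<\infty$ are $F=\emptyset$ (term $0+\rk_\M(V)=2$), $F=\{u,w\}$ (term $1+\rk_\M(\{v\})=2$), and $F=V$ (term $\eta(\D)=\infty$, since $v$ is a co-loop of $\D$); restricting $F$ to complements of $\M$-flats does not help. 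So the ``natural form'' of the bound, with the restriction rank $\rk_\M(V\setminus F)$, cannot be what \cite{matcomp} proves.

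This is not merely a citation-checking issue, because the repair changes the combinatorics. For the bound to survive the example above, the second summand must be something like the contracted rank $\rk(\M.(V\setminus F))=\rk(\M)-\rk_\M(F)$ (indeed with $F=\{u,w\}$ that term is $0$ and the minimum drops to $1$). But then your key estimate $a+b\ge n$ fails: a base $B$ of $\C$ still splits as $(B\cap F)\sqcup(B\setminus F)$ with $B\cap F\in\D[F]$, but $|B\setminus F|$ can greatly exceed $\rk(\M_k)-\rk_{\M_k}(F)$ (e.g.\ when $F$ spans $\M_k$ yet $B\not\subseteq F$), so the arithmetic $\tfrac{a}{k-1}+b\ge\tfrac{n}{k}$ no longer has the input it needs. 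In other words, the step you flag as ``the principal obstacle'' is the entire content of the proof, and the version of the lemma under which your combinatorial estimate is valid is precisely the version that is false. The fallback to $\eta_H$ and Theorem~\ref{etahbig} does not address this, since the failure is in the combinatorial form of the bound, not in a homotopy-versus-homology discrepancy.
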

	\begin{example}[witnessing sharpness of the inequality]
		Let $A$ be a set of size $k$, and let $b$ be an element not belonging to $A$. Let  $\C =\cp(A)\cup\{\{b\}\}$. For every $a \in A$ let~$\M_a$ be the matroid whose bases are~$A$ and $(A \setminus\{a\})\cup \{b\}$.   
		Then $\C =\bigcap_{a \in A}\M_a$, so $\C \in MINT_k$.  We have $rank(\C)=k$ and $\eta(\C)=1$, since $\C$ is not connected. 
	\end{example}
	For a graph $G$ let $i\gamma(G)$ be the maximum over all sets $I \in \I(G)$ of the number of vertices needed to dominate $I$.
	The first combinatorial bound obtained on $\eta$  was implicit in \cite{ah}. 
	
	\begin{proposition}\label{prop:etaigamma}
		$\eta(\I(G))\ge i\gamma(G)$. 
	\end{proposition}

	The next result follows from~\refProp{prop:etarankk} and~\refProp{prop:four_equiv}.
	\begin{corollary}
		If  $\ch$ is $k$-uniform  then $\eta(\M(\ch))\ge \nu(\ch)/k$. 
	\end{corollary}

	In  \cite{aharonibergermeshulam} this was strengthened. 
	\begin{theorem}[Aharoni--Berger--Meshulam~\cite{aharonibergermeshulam}]\label{thm:abm}
		For a $k$-uniform hypergraph~$\ch$,
		\[  \eta(\M(\ch))\ge\frac{\nu^*(\ch)}{k}. \]
	\end{theorem}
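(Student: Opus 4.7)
The plan is to induct on $|V(\ch)| + |E(\ch)|$ using the standard link--deletion formula for matching complexes, combined with linear programming duality. For any hyperedge $e \in \ch$, viewed as a vertex of $\M(\ch)$, the decomposition $\M(\ch) = \overline{\textup{st}}(e) \cup \textup{del}(e)$ has intersection equal to the link $\M(\ch[V(\ch) \setminus e])$, and since the closed star is contractible (a cone with apex $e$) a Mayer--Vietoris / nerve argument (in the spirit of the proof of \refProp{prop:etarankk}) yields
\begin{equation}\label{eq:planlink}
\eta(\M(\ch)) \ge \min\!\Bigl(\eta(\M(\ch \setminus \{e\})),\ \eta(\M(\ch[V(\ch)\setminus e])) + 1\Bigr).
\end{equation}

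On the LP side, for any optimal fractional matching $f^*$ and any edge $e \in \ch$ we have $\nu^*(\ch \setminus \{e\}) \ge \nu^*(\ch) - f^*(e)$, and $\nu^*(\ch[V(\ch)\setminus e]) \ge \nu^*(\ch) - k$, the latter because removing the $k$ vertices of $e$ kills fractional weight at most $\sum_{v \in e}\sum_{e'\ni v} f^*(e') \le k$. I would then choose $e$ with $f^*(e) = 0$: taking $f^*$ to be a basic feasible solution of the primal LP, its support has size at most $|V(\ch)|$, so whenever $|E(\ch)| > |V(\ch)|$ such an $e$ exists. For such $e$ the inductive hypothesis gives $\eta(\M(\ch\setminus\{e\})) \ge \nu^*(\ch)/k$ and $\eta(\M(\ch[V(\ch)\setminus e])) + 1 \ge (\nu^*(\ch)-k)/k + 1 = \nu^*(\ch)/k$, so \eqref{eq:planlink} closes the induction with the exact constant.

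The main obstacle is the residual regime $|E(\ch)| \le |V(\ch)|$, in which every basic optimal fractional matching may have full support and no edge with $f^*(e)=0$ can be extracted. To handle it I would pass to the LP dual: a basic optimal fractional cover $g^*$ has support of size at most $|E(\ch)|$, so (when $|V(\ch)|>|E(\ch)|$) some vertex $v$ carries $g^*(v)=0$, and combining complementary slackness with primal--dual basicness should permit deleting $v$ (or an edge through $v$) without any loss in $\nu^*$, reducing to the previous case. Executing this primal--dual step precisely enough to recover the sharp factor $1/k$, as opposed to the weaker $(\nu^*(\ch)-1)/k$ that naive vertex deletion would yield, together with the boundary case $|E(\ch)|=|V(\ch)|$ where neither primal nor dual pigeonhole applies directly, is the technical heart of the proof.
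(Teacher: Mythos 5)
Your skeleton --- the vertex-deletion Mayer--Vietoris inequality \eqref{eq:planlink} for $\M(\ch)=\I(L(\ch))$, together with the bookkeeping $\nu^*(\ch-e)\ge\nu^*(\ch)-f^*(e)$ and $\nu^*(\ch[V(\ch)\setminus e])\ge\nu^*(\ch)-k$ --- is correct, and the induction does close whenever you can exhibit an optimal fractional matching vanishing on some edge. But the residual regime you flag at the end is not a technicality to be ``executed precisely enough''; it is exactly where the theorem lives, and your proposal contains no actual argument for it. The extremal configurations (the truncated projective plane of \refE{ex:truncatedPk} --- see \refE{ex:truncatedPP} --- and the near-extremal $Q_k$ of \refE{example:P_k}) satisfy $|E(\ch)|\le|V(\ch)|$, and for $T_k$ the optimal fractional matching is the unique uniform one with full support, so no free deletion exists. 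There the deletion branch of \eqref{eq:planlink} only yields, via the inductive hypothesis, $\nu^*(\ch-e)/k\ge\bigl(\nu^*(\ch)-f^*(e)\bigr)/k$, which is strictly below the target $\nu^*(\ch)/k$; the true value of $\eta(\M(\ch-e))$ happens to still be large enough, but the induction cannot see that, and iterating deletions accumulates losses. The proposed dual fix does not repair this: $g^*(v)=0$ for a basic optimal cover does not produce an edge removable without loss (complementary slackness in that direction requires strict complementarity and in any case only tells you $v$ is unsaturated), and ``deleting a vertex $v$'' is not an operation under which $\eta(\M(\cdot))$ behaves monotonically in the direction you need. So there is a genuine gap, and I do not see how to close it within this framework.

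This is in fact why the paper takes a completely different route (\refApp{app:proofofthmnuk}): the theorem is deduced from the Aharoni--Berger--Meshulam spectral bound $\eta(\I(G))\ge\Gamma(G)$, where $\Gamma$ is defined via vector representations of $G$ and is proved by eigenvalue methods, not by deletion--contraction. One then feeds in the canonical vector representation of the line graph $L(\ch)$ (incidence vectors of edges) to get $\Gamma(L(\ch))\ge w^*(\ch)$, the fractional width, and finishes with the elementary LP-duality estimate $w^*(\ch)\ge\nu^*(\ch)/k$ for $k$-uniform $\ch$. The combinatorial game/Meshulam-type inductions in this paper yield bounds of the form $\gamma^E$ or $\nu/k$; upgrading $\nu$ to $\nu^*$ is precisely what the algebraic input buys, and your difficulty in the boundary case is a symptom of that. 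If you want a self-contained proof along your lines, you would need a genuinely new idea for intersecting-type hypergraphs with full-support optima; as written, the proposal defers the entire difficulty.
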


	Here~$\nu^*(\ch)$ is the fractional matching number of ~$\ch$ (see~\refD{def:fractionalmatching}).
	
	A classic tool for obtaining lower bounds on $\eta$ is an exact sequence of complexes, known as the ``Mayer--Vietoris sequence''. One of the consequences of the exactness is reminiscent of unimodularity. We do not know whether  there is an intrinsic connection.

	\begin{lemma}\label{lemma:MV}
		For any pair $\A,\B$ of complexes,
		\[ \eta_H(\A) \ge \min( \eta_H(\A\cup\B), \eta_H(\A\cap\B) ).  \]
	\end{lemma}

	A homotopic version was proved in~\cite{matcomp}.

	Meshulam~\cite{meshulam1} showed how \refProp{prop:etaigamma} can be derived from this inequality. Another bound 
	\[\eta_H(\I(G)) \ge \gamma^E(G)\] 
	follows as a corollary, where~$\gamma^E(G)$ be the minimum size of a set of edges whose union dominates~$G$.
	Here, too, a homotopic version was proved in~\cite{ack}.  
	\begin{proposition}\label{gammae}
		$\eta(\I(G)) \ge \gamma^E(G)$.
	\end{proposition}
	In~\refS{subsec:Meshulamhyper} we prove an extension to independence complexes of hypergraphs (see~\refT{thm:hyperMeshulam}).
	

	\subsection{Topological Hall}

	\begin{theorem}\label{thm:tophall}
		Let $\C$ be a complex,  and let $V_i$ for $1\le i \le m$ be subsets of $V(\C)$. If $\eta(\C[\bigcup_{i \in I} V_i]) \ge |I|$ for every $I\subseteq [m] $ then the sets $V_i$ have a $\C$-transversal. 
		
	\end{theorem}
	
	This was proved implicitly in~\cite{ah} and formulated explicitly by the first author (see remark following Theorem 1.3  in \cite{meshulam1}).
	The special case when~$\C$ is a matroid is Rado's theorem~\cite{rado}, mentioned above, in which~$\eta$ is replaced by its (almost) equal parameter, the rank (see \refProp{prop:etamatroid}). 
	
	Meshulam~\cite{meshulam1} proved the homological version of this theorem, which by~\refT{etahbig} is stronger.

	
	

	\section{Expansion and colorability}\label{sec:expansion}
	
	We adopt the conventions that    $\lceil\frac{c}{\infty}\rceil =1$ and $\frac{c}{0}=\infty$  whenever $c>0$.   
	
	\begin{definition}
		The \emph{rank expansion number} $\E(\C)$ of a complex $\C$ is 
		\[\max_{\emptyset \neq S \subseteq V}\frac{|S|}{rank_\C(S)}.\]
	\end{definition}
	Since covering a set~$S$ by edges of size at most $rank_\C(S)$ requires at least $\frac{|S|}{rank_\C(S)}$ edges, we have 
	
	\begin{equation}\label{eq:boundonchiC}
		\chi(\C) \ge \E(\C).
	\end{equation}
	
	$\E$ has a topological counterpart. 
	\begin{definition}\label{def:ten}
		The {\em topological expansion number} $\cT(\C)$ of a complex $\C$ is 
		\[\max_{\emptyset \neq S \subseteq V}\frac{|S|}{\eta(\C[S])}.\]
	\end{definition}

	For a complex~$\C$, we define \[\bar{\eta}(\C)=\min( \eta(\C), rank(\C)).\]
	
	The advantage of  $\etabar$ over  $\eta$ is that it is always finite.
	\begin{definition}\label{def:en}
		The {\em expansion number} $\Delta(\C)$ of a complex $\C$ is 
		\[\max_{\emptyset \neq S \subseteq V}\frac{|S|}{\etabar(\C(S))}.\]
	\end{definition}
	
	Then 
	\begin{equation}\label{eq:relationexpansion}
		\E(\C)\le \Delta(\C) \quad \text{ and }\quad \cT(\C)\le     \Delta(\C).
	\end{equation}

	The following was  proved by Edmonds~\cite{edmonds1965minimum},
	extending a theorem of Nash-Williams~\cite{williams} on graph arboricity. 
	\begin{theorem}\label{thm:williams}
		In any matroid $\M$ we have $\chi(\M)=\lceil \E(\M)\rceil$. 
	\end{theorem}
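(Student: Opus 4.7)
The plan is to split into the two inequalities $\chi(\M)\ge\lceil\Delta(\M)\rceil$ and $\chi(\M)\le\lceil\Delta(\M)\rceil$, the second being the substantive direction.

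The first step in either direction is the observation that for a matroid $\M$ and any $S\subseteq V$, $\bar\eta(\M[S])=\mathrm{rank}_\M(S)$. Indeed, by \refProp{prop:etamatroid} applied to $\M[S]$, either $\eta(\M[S])=\mathrm{rank}(\M[S])=\mathrm{rank}_\M(S)$, or $\M[S]$ has a co-loop and $\eta(\M[S])=\infty$; in either case the truncation to $\min(\eta,\mathrm{rank})$ is $\mathrm{rank}_\M(S)$. Consequently $\Delta(\M)=\E(\M)$, and the lower bound $\chi(\M)\ge\lceil\Delta(\M)\rceil$ is immediate from \eqref{eq:boundonchiC} and the integrality of $\chi$.

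For the upper bound, set $k=\lceil\Delta(\M)\rceil$. I would recast ``$\M$ admits a proper $k$-coloring'' as a topological choice problem. Take $k$ disjoint copies $V_1,\dots,V_k$ of $V$, write $v_j$ for the copy of $v\in V$ in $V_j$, and form the join complex $\C=\M_1*\cdots*\M_k$ on $V_1\cup\cdots\cup V_k$, where $\M_j$ is a copy of $\M$ on $V_j$. For each $v\in V$ let $W_v=\{v_1,\dots,v_k\}$. A choice function $\phi:V\to V_1\cup\cdots\cup V_k$ with $\phi(v)\in W_v$ and $\mathrm{Im}(\phi)\in\C$ is exactly a map $c:V\to[k]$ (defined by $\phi(v)=v_{c(v)}$) such that each color class $c^{-1}(j)$ is independent in $\M$, i.e.\ a proper $k$-coloring of $\M$. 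Thus it suffices to verify the hypothesis of \refT{thm:tophall} for the system $\{W_v\}_{v\in V}$.

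Given $T\subseteq V$, $\bigcup_{v\in T}W_v=T_1\cup\cdots\cup T_k$, where $T_j\subseteq V_j$ is the copy of $T$, so $\C[\bigcup_{v\in T}W_v]=\M[T]*\cdots*\M[T]$ ($k$ factors). Applying \refProp{prop:etajoin} inductively gives
\[
\eta\bigl(\C[\textstyle\bigcup_{v\in T}W_v]\bigr)\ge k\cdot\eta(\M[T])\ge k\cdot\bar\eta(\M[T])=k\cdot\mathrm{rank}_\M(T),
\]
(with the convention that if $\eta(\M[T])=\infty$ the bound is vacuously $\infty$). By the choice of $k\ge\Delta(\M)=\E(\M)\ge|T|/\mathrm{rank}_\M(T)$, this is at least $|T|$, as required; the case $T=\emptyset$ is handled by $\eta(\emptyset)=0$. \refT{thm:tophall} then yields the desired $\phi$ and hence the $k$-coloring.

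I expect the only conceptual step is the reformulation of matroid $k$-colorability as a topological Hall problem on the $k$-fold join, which makes the theorem fall out of the additivity of $\eta$ under joins plus \refProp{prop:etamatroid}. There is no serious obstacle; the routine care is in bookkeeping the co-loop case inside the $\bar\eta$ vs.\ $\eta$ distinction, and in noting that $\chi\in\mathbb Z$ upgrades the linear inequality to a ceiling.
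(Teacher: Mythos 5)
Your proof is correct. Note, however, that the paper offers no proof of this statement at all: it is quoted as a known theorem of Edmonds (minimum partition of a matroid into independent sets), whose original argument is purely combinatorial, via matroid union and augmenting-path/exchange reasoning. What you have done instead is assemble a topological proof from the paper's own toolkit, and it is essentially a specialization of the paper's Theorem~\ref{chidelta} (whose proof in the cited reference is exactly your $k$-fold join plus Topological Hall argument): the lower bound is \eqref{eq:boundonchiC} plus integrality, the identification $\Delta(\M)=\E(\M)$ is Whitney's $\eta(\M)=\rank(\M)$ (Proposition~\ref{prop:etamatroid}) fed through the definition of $\bar\eta$, and the upper bound is Theorem~\ref{chidelta} applied to $\M$. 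The trade-off is the usual one: Edmonds' route is elementary and effectively algorithmic, while your route is short given the topological machinery and, as the paper points out, upgrades with no extra work to the list-coloring statement (Theorem~\ref{thm:chiellM}), which is exactly how the paper rederives Seymour's $\chi_\ell(\M)=\chi(\M)$. One loose end worth a sentence in a final write-up: if $\M$ has a loop then $\rank_\M(S)=0$ for some nonempty $S$, so $\Delta(\M)=\infty=\chi(\M)$ and your choice $k=\lceil\Delta(\M)\rceil$ is not defined; you should dispose of this degenerate case before assuming $\rank_\M(T)\ge 1$ for all nonempty $T$.
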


	For general complexes  an inequality holds.
	
	\begin{theorem}[Corollary 8.6 in \cite{matcomp}]\label{chidelta}
		$\chi(\C) \le \lceil\Delta(\C)\rceil$.
	\end{theorem}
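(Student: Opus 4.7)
The plan is to reduce the coloring question to one of choosing a system of distinct representatives in a suitable auxiliary complex, and then invoke the topological Hall theorem (\refT{thm:tophall}).

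Set $k := \lceil \Delta(\C) \rceil$; if $k = \infty$ the inequality is vacuous, so we may assume that $k$ is a finite positive integer. Form the $k$-fold join $\C' := \C * \C * \cdots * \C$, whose ground set we identify with $V \times [k]$, so that the $i$-th copy of $v \in V$ is $(v, i)$. By the definition of the join, a set $T \subseteq V \times [k]$ lies in $\C'$ iff for each $i \in [k]$ its ``$i$-th slice'' $\{v : (v, i) \in T\}$ is a face of $\C$. For every $v \in V$ set $P_v := \{v\} \times [k]$. A choice function $\phi : V \to V \times [k]$ with $\phi(v) \in P_v$ for every $v$ and $Im(\phi) \in \C'$ is exactly the data of a coloring of $\C$ by $k$ faces: the color classes are the slices of $Im(\phi)$, and they partition $V$ into faces of $\C$.

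To apply \refT{thm:tophall} one must verify the connectivity hypothesis $\eta(\C'[\bigcup_{v \in I} P_v]) \ge |I|$ for every $I \subseteq V$. Given such an $I$, write $X := I \times [k]$ and observe that $\C'[X]$ coincides with the $k$-fold join of $\C[I]$ with itself. Iterating \refProp{prop:etajoin} gives $\eta(\C'[X]) \ge k \cdot \eta(\C[I])$. On the other hand, the definition of $\Delta$ forces $\etabar(\C[I]) \ge |I|/\Delta(\C) \ge |I|/k$, and since $\eta \ge \etabar$ we conclude $k \cdot \eta(\C[I]) \ge |I|$ as required.

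With the hypothesis of \refT{thm:tophall} verified, the resulting choice function supplies a coloring of $\C$ by $k$ faces, yielding $\chi(\C) \le k = \lceil \Delta(\C) \rceil$. The technical heart of the argument is the passage, via \refProp{prop:etajoin}, from the expansion-type bound built into $\Delta(\C)$ to a topological connectivity bound on the auxiliary complex; once this is in place, the topological Hall theorem delivers the coloring immediately.
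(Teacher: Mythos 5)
Your proof is correct and is essentially the argument the paper has in mind: the paper cites \cite{matcomp} for this statement but proves the stronger \refT{thm:chiellM} by exactly this construction (a join of copies of $\C$ indexed by colors, superadditivity of $\eta$ under join via \refProp{prop:etajoin}, then Topological Hall), and your argument is the specialization of that proof to the case where every list equals $[k]$.
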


	\section{List coloring}\label{sec:main1}
	
	Recall that given a complex $\C$ and a list $L_v$ of permissible colors at each ~$v \in V$,  a $\C$-respecting list coloring is a choice (from the lists) function $f:V\rightarrow \cup_{v\in V}L_v$ such that $f^{-1}(c) \in \C$ for every color $c\in \cup_{v\in V}L_v$.
	


	Virtually the same proof as that of \refT{chidelta} yields the following stronger result.
	
	\begin{theorem}\label{thm:chiellM}
		For any  complex~$\C$,
		\[ \chi_\ell(\C)\le\lceil\cT(\C)\rceil. \]
	\end{theorem}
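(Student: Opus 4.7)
The plan is to adapt the topological argument behind Theorem~\ref{chidelta} to the list-colouring setting by applying the topological Hall theorem (Theorem~\ref{thm:tophall}) to a suitable join of induced subcomplexes. I set $p=\lceil\cT(\C)\rceil$, so that by the definition of $\cT$ every nonempty $S\subseteq V$ satisfies $\eta(\C[S])\ge |S|/p$ (if some such $S$ has $\eta(\C[S])=0$, then $\cT(\C)=\infty$ and the statement is vacuous). Given lists $L_v$ of size $p$, I let $L=\bigcup_{v\in V}L_v$ and, for each $c\in L$, put $V_c=\{v\in V:c\in L_v\}$.

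The central construction is to form pairwise-disjoint tagged copies $V_c^{(c)}$ and their join
\[
\C^{\ast}\;:=\;\underset{c\in L}{*}\,\C[V_c]^{(c)},
\]
together with, for each $v\in V$, the $p$-element subset $W_v=\{v^{(c)}:c\in L_v\}$ of $V(\C^{\ast})$. A choice function $\phi:V\to V(\C^{\ast})$ with $\phi(v)\in W_v$ and $\mathrm{Im}(\phi)\in\C^{\ast}$ decodes, via $\phi(v)=v^{(c(v))}$, to a $\C$-respecting list colouring $v\mapsto c(v)$: being a face of the join means precisely that, for every colour $c$, the class $\{v:c(v)=c\}$ is a face of $\C[V_c]\subseteq\C$; and $\phi(v)\in W_v$ guarantees $c(v)\in L_v$. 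So it suffices to verify the topological Hall hypothesis for the family $\{W_v\}_{v\in V}$.

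The only step requiring work is the inequality $\eta(\C^{\ast}[\bigcup_{v\in I}W_v])\ge|I|$ for every $I\subseteq V$. The key observation is that $\bigcup_{v\in I}W_v=\bigsqcup_{c\in L}(I\cap V_c)^{(c)}$, so the induced subcomplex is itself the join $*_{c\in L}\,\C[I\cap V_c]$. Iterating Proposition~\ref{prop:etajoin} and then applying the bound $\eta(\C[S])\ge|S|/p$ (trivially true when $S=\emptyset$) gives
\[
\eta\Big(\underset{c\in L}{*}\,\C[I\cap V_c]\Big)\;\ge\;\sum_{c\in L}\eta(\C[I\cap V_c])\;\ge\;\frac{1}{p}\sum_{c\in L}|I\cap V_c|\;=\;\frac{1}{p}\sum_{v\in I}|L_v|\;=\;|I|,
\]
the penultimate equality by double-counting pairs $(v,c)$ with $v\in I$ and $c\in L_v$.

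The conceptual content is carried entirely by the homotopic join inequality (Proposition~\ref{prop:etajoin}); it is exactly this inequality that allows $\eta$, and hence $\cT$, to replace $\etabar$ and $\Delta$ in the argument for Theorem~\ref{chidelta}, while the tagged-join construction above simultaneously upgrades the single palette $[p]$ of ordinary colouring to arbitrary lists of size $p$. No substantive obstacle remains beyond bookkeeping with the conventions for $S=\emptyset$ and $\cT(\C)=\infty$; this is what the authors mean by ``virtually the same proof.''
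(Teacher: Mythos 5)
Your proposal is correct and is essentially identical to the paper's proof: the same tagged join $\D=*_{c}\,\C[V_c]$, the same sets $W_v$ of copies of $v$, the same verification of the Topological Hall hypothesis via Proposition~\ref{prop:etajoin} and the double count $\sum_c|I\cap V_c|=p|I|$, and the same decoding of the choice function into a $\C$-respecting list colouring. The extra remarks on the conventions for $S=\emptyset$ and $\cT(\C)=\infty$ are harmless bookkeeping that the paper leaves implicit.
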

	\begin{proof}
		For any system of lists $(L_v)_{v\in V(\C)}$ satisfying $|L_v|=\lceil \cT(\C) \rceil$, we shall find a $\C$-respecting list coloring.
		Let $J=\bigcup_{v\in V}L_v$ be the set of all the colors. For every color $j \in J$, let $F_j=\{v\in V(\C) \mid j \in L_v\}$ be the set of all elements such that the color~$j$ is in their lists. Form $W=\bigcup_{j\in J}\{j\}\times F_j$, which is the set of~$|L_v|$ copies of each~$v\in V(\C)$.
		For each~$j \in J$ let~$\C_j$ be the complex  $\{\{j\}\times \sigma \mid \sigma \in \C[F_j]\}$, namely a copy of $\C [F_j]$.
		Let $\D=*_{j \in J}\C_j$, namely the join of all $\C_j$ for $j\in J$, which is a complex on~$W$.
		
		For $v\in V(\C)$, let $W_v=\{(j,v)\mid j\in L_v \}$ be the set of all the copies of~$v$ in~$W$. For $I \subseteq V$, we claim that 
		\begin{equation}\label{eq:tophallcondition}
			\eta(\D[\cup_{v\in I}W_v]) \ge |I|.
		\end{equation}

		To prove the claim, by \refProp{prop:etajoin} we have
		\[\eta(\D[\cup_{v\in I}W_v]) \ge \sum_{j \in J}\eta(\C[F_j \cap I]).\]

		By the definition of $\cT(\C)$, we have 
		\[\eta(\C[F_j \cap I])\ge \frac{|F_j \cap I|}{\lceil \cT(\C)\rceil}.\] 
		Every vertex~$v$ of~$I$ appears in $|L_v|=\lceil \cT(\C)\rceil$ many $F_j$, hence $\sum_{j \in J}|F_j \cap I|=\lceil \cT(\C)\rceil\cdot |I|$. Hence 
		\[\eta(\D[\cup_{v\in I}W_v])\ge \sum_{j \in J}\eta(\C[F_j \cap I])\ge 
		\frac{\sum_{j \in J}|F_j \cap I|}{\lceil \cT(\C)\rceil}=
		\frac{\lceil \cT(\C)\rceil\cdot |I|}{\lceil \cT(\C)\rceil}=|I|,\]
		completing the proof of~\eqref{eq:tophallcondition}.
		
		By \refT{thm:tophall}, there exists a choice function~$\phi:V\rightarrow \cup_{v\in V}W_v$
		such that $\phi(v)\in W_v$ for each~$v\in V$ and $Im(\phi)\in \D$. Especially,~$Im(\phi)$ is of the form $\bigsqcup_{j\in J}\{j\}\times \sigma_j$ for $\sigma_j\in \C[F_j]\subseteq \C$. Coloring every vertex $v\in V$ by the color $j=\phi(v)$ (so that $v\in\sigma_j$) produces then the desired $\C$-respecting list coloring.
	\end{proof}
	
	Combining with~\refProp{prop:etamatroid} and \refT{thm:williams}, this yields the following result.
	\begin{theorem}
		For a matroid~$\M$, $\chi_\ell(\M) =\chi(\M)$. 
	\end{theorem}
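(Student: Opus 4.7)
The plan is to chain together three results that are already in place: \refT{thm:chiellM}, \refProp{prop:etamatroid}, and \refT{thm:williams}. First, I would observe that for a matroid $\M$ and every nonempty $S\subseteq V(\M)$, the restriction $\M[S]$ is again a matroid, and by \refProp{prop:etamatroid} we have $\eta(\M[S])\ge \text{rank}(\M[S])=\text{rank}_\M(S)=\etabar(\M[S])$ (with $\eta(\M[S])=\infty$ when $\M[S]$ has a co-loop, which only makes the inequality stronger). Taking reciprocals and multiplying by $|S|$, this shows that each term in the max defining $\cT(\M)$ is at most the corresponding term in the max defining $\Delta(\M)$, hence $\cT(\M)\le \Delta(\M)$.

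Next, I would apply \refT{thm:chiellM} to $\C=\M$ to get $\chi_\ell(\M)\le \lceil\cT(\M)\rceil$, and combine this with the previous inequality to obtain $\chi_\ell(\M)\le \lceil\Delta(\M)\rceil$. By the Edmonds--Nash-Williams theorem (\refT{thm:williams}), $\lceil\Delta(\M)\rceil=\chi(\M)$, giving $\chi_\ell(\M)\le \chi(\M)$.

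The reverse inequality $\chi_\ell(\M)\ge \chi(\M)$ is immediate from the definitions, since taking all lists to be $[\chi_\ell(\M)]$ produces a $\C$-respecting list coloring that is in particular an ordinary coloring. Combining the two inequalities yields $\chi_\ell(\M)=\chi(\M)$.

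There is essentially no obstacle: once \refT{thm:chiellM} has been proved, the whole content of the statement is the observation that for matroids the topological expansion number $\cT$ coincides with (or is dominated by) the combinatorial expansion $\Delta$, which is exactly what \refProp{prop:etamatroid} delivers. So the proof is a three-line chain of inequalities.
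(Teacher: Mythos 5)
Your proof is correct and is essentially the paper's own argument: the paper derives the theorem by combining \refT{thm:chiellM} with $\cT(\M)\le\Delta(\M)$ (its inequality~\eqref{eq:relationexpansion}) and \refT{thm:williams}, exactly as you do. Your only deviation is re-deriving $\cT(\M)\le\Delta(\M)$ from \refProp{prop:etamatroid}, whereas the inequality already holds for every complex simply because $\etabar\le\eta$ by definition.
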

	
	As noted above, this was first proved by Seymour~\cite{seymourmatroid}.

	In~\cite{berczi,kiralyk} it was conjectured  that there exists a constant~$\alpha$
	such that $\chi_\ell(\C)\le \alpha \chi(\C)$ for every $\C\in MINT_2$. Indeed, this is the case.
	\begin{theorem}\label{thm:chiellCkchiC}
		If $\C \in MINT_k$ then $\chi_\ell(\C) \le k\chi(\C)$.
	\end{theorem}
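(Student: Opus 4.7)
The plan is to string together three results that have already been proved in the excerpt: the topological expansion bound on $\chi_\ell$ (\refT{thm:chiellM}), the connectivity lower bound for intersections of $k$ matroids (\refProp{prop:etarankk}), and the trivial bound $\E(\C)\le \chi(\C)$ from~\eqref{eq:boundonchiC}. The key observation that glues them together is that the class $MINT_k$ is closed under taking induced subcomplexes.

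First I would fix $\C=\bigcap_{i=1}^k\M_i\in MINT_k$, and, given $S\subseteq V$, note that $\C[S]=\bigcap_{i=1}^k\M_i[S]$, so $\C[S]\in MINT_k$ as well. Then \refProp{prop:etarankk} applied to $\C[S]$ gives
\[\eta(\C[S])\;\ge\;\frac{rank(\C[S])}{k}\;=\;\frac{rank_{\C}(S)}{k}.\]
Dividing $|S|$ by this inequality and taking the maximum over nonempty $S\subseteq V$, I obtain
\[\cT(\C)\;=\;\max_{\emptyset\neq S\subseteq V}\frac{|S|}{\eta(\C[S])}\;\le\;k\cdot\max_{\emptyset\neq S\subseteq V}\frac{|S|}{rank_{\C}(S)}\;=\;k\,\E(\C).\]

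Second, I invoke~\eqref{eq:boundonchiC} to get $\E(\C)\le \chi(\C)$, and hence $\cT(\C)\le k\chi(\C)$. Since $k\chi(\C)$ is an integer, $\lceil\cT(\C)\rceil\le k\chi(\C)$. Finally, \refT{thm:chiellM} yields
\[\chi_\ell(\C)\;\le\;\lceil\cT(\C)\rceil\;\le\;k\chi(\C),\]
which is the desired conclusion.

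Since every ingredient is already in place, there is essentially no obstacle here: the only thing to verify carefully is that $MINT_k$ is preserved under the operation $\C\mapsto \C[S]$, which is immediate from the fact that restriction commutes with intersection and that the restriction of a matroid to a subset is again a matroid.
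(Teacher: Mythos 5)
Your proof is correct and follows essentially the same route as the paper's: both combine \refT{thm:chiellM}, \refProp{prop:etarankk} applied to induced subcomplexes (using that $MINT_k$ is closed under restriction), and the bound $\chi(\C)\ge\E(\C)$ from~\eqref{eq:boundonchiC}. The only cosmetic difference is that the paper fixes a maximizing set $S$ and chains the inequalities there, while you take the maximum over all $S$ first; the content is identical.
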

	\begin{proof}
		Assume that $S\subseteq V(\C)$ attains the maximum  in the definition of $\cT(\C)$, namely $\frac{|S|}{\eta(\C[S])}=\cT(\C)$.
		By~\refProp{prop:etarankk}, $rank_\C(S)\le k\eta(\C[S])$.
		Together with~\eqref{eq:boundonchiC}, this yields
		\[ k\chi(\C)\ge k\chi(\C[S]) \ge \Big\lceil k\frac{|S|}{rank_{\C}(S)}\Big\rceil \ge  \Big\lceil \frac{|S|}{\eta(\C[S])}\Big\rceil =\lceil \cT(\C) \rceil \ge \chi_\ell(\C),   \]
		where the last inequality is given by~\refT{thm:chiellM}.
		It completes the proof.
	\end{proof}

	\begin{conjecture}\cite{berczi,kiraly2013twomatroids}
		If $\C \in MINT_2$ then $\chi_\ell(\C) = \chi(\C)$.
	\end{conjecture}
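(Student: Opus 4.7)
The plan is to tighten the argument of Theorem~\ref{thm:chiellCkchiC} by exploiting the list-coloring structure. Given $\C=\M_1\cap\M_2\in MINT_2$ with $\chi(\C)=p$, and lists $L_v$ of size $p$ at each $v\in V$, I would build the join complex $\D=*_{j\in J}\C_j$ exactly as in the proof of Theorem~\ref{thm:chiellM}. It suffices to verify the Topological Hall condition
\[
\sum_{j\in J}\eta(\C[F_j\cap I])\;\ge\;|I|\qquad\text{for every }I\subseteq V,
\]
where $F_j=\{v:j\in L_v\}$. The argument in Theorem~\ref{thm:chiellM} then delivers a $\C$-respecting list coloring.

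The naive bound from Proposition~\ref{prop:etarankk}, $\eta(\C[F_j\cap I])\ge rank_\C(F_j\cap I)/2$, together with $rank_\C(F_j\cap I)\ge|F_j\cap I|/\chi(\C[F_j\cap I])\ge|F_j\cap I|/p$, gives only $\sum_j\eta(\C[F_j\cap I])\ge|I|/2$ — a factor of $2$ short. To close this gap I would first attempt to prove the conjectured strengthening of Proposition~\ref{prop:etarankk} from factor $k$ to $k-1$ (discussed in the paper's introduction after item~\eqref{eq:PkcapP}); for $k=2$ it reads $\eta(\C)\ge rank(\C)$, which inserted into the display above closes the argument immediately.

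Since that strengthening is open, an alternative route is to mimic the Edmonds–Seymour reduction of Theorem~\ref{thm:strongerseymour}. Form the bipartite graph $\Gamma$ between $V$ and the color set $C=\bigcup_v L_v$, let $\cp$ be the partition matroid on $E(\Gamma)$ with parts $S_v=\{vc:c\in L_v\}$, and let $\cq_i$ be the matroid in which, for each color $c$, the edges incident with $c$ form a set independent in $\M_i$. One then seeks a common transversal of $\cp,\cq_1,\cq_2$ of size $|V|$. Since $\cq_1\cap\cq_2\cong *_{c\in C}\C[F_c]$, Theorem~\ref{thm:matcom} reduces the problem to an inequality of the same flavour as above, but now with the additional structural rigidity of the partition matroid $\cp$ and Edmonds' identity $P(\M_1\cap\M_2)=P(\M_1)\cap P(\M_2)$ available. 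The hope is that one can replace crude rank bounds by fractional bounds via Theorem~\ref{thm:abm}, applied to an auxiliary $k$-uniform hypergraph encoding which colors are available at each vertex.

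The main obstacle is precisely the sharpness of Proposition~\ref{prop:etarankk}: for $k=2$ the extremal example (two disjoint edges of size $2$ together with a transversal edge of size $2$) has $\eta=1$ while $rank=2$, so any proof of the conjecture must genuinely use that in the list-coloring setting the subcomplexes $\C[F_j\cap I]$ cannot all realize this pathology simultaneously. Proving such an ``anti-alignment'' property of the lists is the heart of the difficulty, and I would expect that it requires either Meshulam's homological refinement of Topological Hall applied to $\D$ directly (rather than factor-by-factor via Proposition~\ref{prop:etaHjoin}), or a Galvin-style kernel/orientation argument extending the Kotlar case of two generalized partition matroids~\cite{hesolo} to arbitrary pairs of matroids.
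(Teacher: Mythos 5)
You have not proved this statement, and neither does the paper: it is stated there as an open conjecture (only the cases of two partition matroids, by Galvin, and of two generalized partition matroids~\cite{hesolo}, are recorded as known). Your text is a research program rather than a proof, and you largely say as much; but one of its two routes rests on a false premise. The ``conjectured strengthening of \refProp{prop:etarankk} from factor $k$ to $k-1$'' is not among the conjectures made after item~\eqref{eq:PkcapP} in the introduction (those concern $\chi_\ell$, $\chi^*$ and the polytopes, not $\eta$ versus rank), and for \refProp{prop:etarankk} itself such a strengthening is refuted by the paper's own sharpness example: the complex $\M(\mathcal{K})$ has $\eta=1$ and $rank=k$, so $\eta\ge rank/(k-1)$ fails for every $k\ge 2$. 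You notice this extremal configuration in your last paragraph but do not draw the conclusion that your first route is closed, not merely ``open.''

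The second route is the more interesting one, but it stops exactly where the difficulty begins. Reducing to a common transversal of $\cp$, $\cq_1$, $\cq_2$ trades a two-matroid problem for a three-matroid one, so Edmonds' intersection theorem no longer gives a min--max characterization; and applying \refT{thm:matcom} with $\cq_1\cap\cq_2\cong *_{c}\,\C[F_c]$ brings you straight back to the inequality $\sum_c\eta(\C[F_c\cap I])\ge|I|$ that you could not establish in the first place. \refT{thm:abm} does not help here, since the subcomplexes $\C[F_c\cap I]$ are intersections of two arbitrary matroids, not matching complexes of uniform hypergraphs. The ``anti-alignment'' property you name at the end --- that the lists cannot force all the $\C[F_c\cap I]$ into the extremal configuration simultaneously --- is precisely the open content of the conjecture, and no argument for it is offered. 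What the paper actually proves in this direction is the weaker \refT{thm:chiellCkchiC}, giving $\chi_\ell(\C)\le 2\chi(\C)$ for $\C\in MINT_2$, by exactly the rank-versus-$\eta$ comparison that you correctly describe as a factor of $2$ short of the conjecture.
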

	
	Galvin~\cite{GALVIN1995153} proved the conjecture for $\C=\M\cap \N$ where ~$\M$ and~$\N$ are partition matroids. More cases were studied in~\cite{berczi,kiraly2013twomatroids,hesolo}.

	\section{Bounding $\Delta_\eta(\bigcap\EL)$  for  partition matroids}\label{sec:main2partition}
	
	In this section, we prove an upper bound on $\Delta_\eta$,  thereby also on $\chi_\ell$ (by~\refT{thm:chiellM}), of the intersection of $k$ partition matroids. We   conjecture that the result holds also for the intersection of general matroids, but in the general case we can only prove a weaker result --- this will be done in the next section.

	\begin{theorem}\label{thm:Deltavsd}
		If $\EL=\{M_1, \ldots,\M_k\}\in\D^k$ is a set of $k$ partition matroids, then 
		\begin{equation}\label{eq:deltvsdelta}
			\cT(\bigcap \EL)\le k \max_{1\le i\le k}\E(\M_i).
		\end{equation}   
	\end{theorem}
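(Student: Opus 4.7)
The strategy is to translate the statement to the language of $k$-partite $k$-uniform hypergraphs, via the correspondence described in the introduction, and then apply the Aharoni--Berger--Meshulam theorem (\refT{thm:abm}) together with the trivial uniform fractional matching.

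Let $\ck=\ck(\EL)$ be the $k$-partite $k$-uniform hypergraph associated with the collection of partition matroids $\EL$, so that $\bigcap \EL=\M(\ck)$. Under this correspondence, the ground set $V$ of the matroids equals $E(\ck)$, and each part $P^i_j$ of $\M_i$ corresponds to a vertex on side $V_i$ of $\ck$ whose degree is exactly $|P^i_j|$. The first step is to observe that for any partition matroid with parts $\{P^i_j\}$, the quantity $|S|/\rank_{\M_i}(S)$ is the average of $|S\cap P^i_j|$ over the nonempty parts meeting $S$; this is maximized by taking $S$ to be the largest part, giving
\[
\E(\M_i)=\max_j |P^i_j|=\max_{x\in V_i}\deg_\ck(x).
\]
Consequently $\max_{1\le i\le k}\E(\M_i)=\Delta(\ck)$, the maximum degree in $\ck$.

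Next, for any nonempty $S\subseteq V=E(\ck)$, write $\ck_S$ for the $k$-uniform $k$-partite sub-hypergraph with edge set $S$ (and vertex set inherited from $\ck$). Directly from the definitions, the induced complex $(\bigcap\EL)[S]$ coincides with the matching complex $\M(\ck_S)$. Applying \refT{thm:abm} to $\ck_S$ gives
\[
\eta\big((\textstyle\bigcap\EL)[S]\big)=\eta(\M(\ck_S))\ge \frac{\nu^*(\ck_S)}{k}.
\]

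Now I bound $\nu^*(\ck_S)$ from below by exhibiting an explicit fractional matching in $\ck_S$: assign each edge $e\in S$ the weight $x_e=1/\Delta(\ck)$. This is a valid fractional matching because for every vertex $v$ of $\ck_S$,
\[
\sum_{e\in S,\, e\ni v} x_e=\frac{\deg_{\ck_S}(v)}{\Delta(\ck)}\le\frac{\deg_{\ck}(v)}{\Delta(\ck)}\le 1.
\]
Its total weight is $|S|/\Delta(\ck)$, so $\nu^*(\ck_S)\ge |S|/\Delta(\ck)$.

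Combining the two bounds yields, for every nonempty $S\subseteq V$,
\[
\frac{|S|}{\eta((\bigcap\EL)[S])}\le \frac{k|S|}{\nu^*(\ck_S)}\le k\Delta(\ck)=k\max_{1\le i\le k}\E(\M_i).
\]
Taking the maximum over $S$ gives the claimed inequality $\cT(\bigcap\EL)\le k\max_i \E(\M_i)$. I do not expect any serious obstacle here: all of the ingredients (the hypergraph dictionary, \refT{thm:abm}, and the trivial uniform fractional matching) are ready-to-use; the mild subtlety is recognizing that one should lower bound $\nu^*(\ck_S)$ by the trivial $1/\Delta(\ck)$-weighting rather than trying to match the expansion of each $\M_i$ directly, since it is precisely this uniform weighting that converts a degree bound on $\ck$ into a matching bound on every induced sub-hypergraph.
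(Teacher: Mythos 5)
Your proposal is correct and follows essentially the same route as the paper's proof: pass to the associated $k$-partite hypergraph, note that $\max_i\E(\M_i)$ is its maximum degree, use the uniform weight $1/\Delta$ to lower-bound the fractional matching number, apply \refT{thm:abm}, and observe the argument persists for every induced sub-hypergraph. The only difference is that you spell out the identification $\E(\M_i)=\max_j|P^i_j|$ and the restriction to sub-hypergraphs $\ck_S$ explicitly, which the paper leaves implicit.
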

	
	\begin{proof}
		Let $\ch=\ck(\EL)$, the $k$-partite hypergraph associated with $\EL$ (see the definition in the proof of~\refProp{prop:four_equiv}). Let $d=\max_{1\le i\le k}\E(\M_i)$. Then the maximum degree in~$\ch$ is~$d$,  hence putting weight $\frac{1}{d}$ on every edge of $\ch$ yields a fractional matching, so $\nu^*(\ch)\ge \frac{|E(\ch)|}{d}$. By 
		\refT{thm:abm} this implies  $\eta(\M(\ch))\ge \frac{|E(\ch)|}{kd}$, and thus $\frac{|E(\ch)|}{\eta(\M(\ch))} \le kd$. By the same token, this is true for any subhypergraph of~$\ch$, yielding~\eqref{eq:deltvsdelta}. 
	\end{proof}
	The inequality in \eqref{eq:deltvsdelta} is sharp, as shown by the following example.

	\begin{example}\label{example:P_k}
		The $k$-uniform affine plane is obtained by removing a line $L$ from the projective plane of uniformity $k+1$ (if such exists) and removing all vertices of ~$L$ from the other lines. 
		Thus the affine plane is a $k$-uniform hypergraph on $k^2$ vertices that is the union of $k+1$ matchings $M_1, \ldots ,M_{k+1}$, each of size $k$, that are pairwise cross-intersecting, namely $e \cap f \neq \emptyset$ whenever $e,f$ belong to different $M_i$s. For example, the affine plane for $k=2$ is $K_4$. 
		
		Choose one of the $k+1$ matchings, say $M_1$, and remove its edges (keeping the vertices). What remains is a $k$-partite hypergaph, which we name $Q_k$, whose sides are the edges of $M_1$. It is $k$-regular, namely each vertex is contained in~$k$ edges. Also, $|E(Q_k)|=k^2$. For example, $Q_2=C_4$.

		Let~$\EL=\EL(Q_k)$.
		By the pairwise cross-intersecting property, we have $\eta(\bigcap\EL)=\eta(\M(Q_k))=1$ and thus $\cT(\bigcap\EL)=|Q_k|=k^2$. On the other hand, the $k$-regularity of~$Q_k$ means~$\E(\M_i)=k$ so that  $\cT(\bigcap\EL)=k^2=k\max_{1\le i\le k}\E(\M_i)$.
	\end{example}
	Combining~\refT{thm:chiellM}, \refT{thm:Deltavsd} 
	and~\refT{thm:williams} yields the following corollary.
	\begin{corollary}\label{cor:listcolk=2}
		If $\C$ is the intersection of~$k$ partition matroids~$\M_1,\dots, \M_k$ on the same ground set, then $\chi_\ell(\C) \le k \max_{1\le i\le k}\chi(\M_i)$.
	\end{corollary}

	\section{Bounding $\cT(\bigcap \EL)$  for general matroids}\label{sec:main2}
	
	\subsection{A bound}
	As mentioned above, we suspect that \refT{thm:Deltavsd} is also valid  for $k$-tuples of general matroids, but  we can only prove a weaker result.

	\begin{theorem}\label{thm:Delta2k}
		Let $\EL=\{\M_1,\dots, \M_k\} \in \D^k$. Then 
		\begin{equation}\label{deltvsdeltagen}
			\cT(\bigcap\EL)\le (2k-1) \max_{1\le i\le k}\E(\M_i).
		\end{equation}
		
	\end{theorem}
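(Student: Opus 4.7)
The plan is to reduce the topological bound to a rank (combinatorial) bound via \refProp{prop:etarankk}, then establish the rank bound by induction on $k$ using \refT{thm:matcom}.

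Set $d = \max_{1 \le i \le k} \E(\M_i)$. For every non-empty $S \subseteq V$, $\bigcap \EL[S] = \bigcap_i \M_i[S] \in MINT_k$, so \refProp{prop:etarankk} yields $\eta(\bigcap \EL[S]) \ge \rank(\bigcap \EL[S])/k$. Hence it suffices to show the rank bound
\begin{equation}\label{eq:rankbd}
\rank(\bigcap \EL[S]) \ge \frac{k|S|}{(2k-1)d},
\end{equation}
since this immediately gives $\eta(\bigcap \EL[S]) \ge |S|/((2k-1)d)$, i.e., $\cT(\bigcap \EL) \le (2k-1)d$.

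I would prove \eqref{eq:rankbd} by induction on $k$. The base case $k = 1$ is $\rank(\M_1[S]) \ge |S|/d$, which is exactly $\E(\M_1) \le d$. For the inductive step, set $\C' = \bigcap_{i<k} \M_i$; applying the inductive hypothesis to every $\C'[X]$ and then \refProp{prop:etarankk} to the $(k-1)$-tuple gives $\eta(\C'[X]) \ge |X|/((2k-3)d)$ for every non-empty $X \subseteq V$. Now I would apply \refT{thm:matcom} to a rank-$r$ truncation of $\M_k[S]$, with $r = \lceil k|S|/((2k-1)d) \rceil$, and the complex $\C'[S]$. The theorem extracts a base of the truncated matroid lying inside $\C'[S]$, which is precisely the desired common independent set of size $r$ in $\bigcap \EL[S]$.

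The main obstacle is verifying the hypothesis of \refT{thm:matcom}, namely $\eta(\C'[X]) \ge \rank((\M_k[S])_r . X)$ for every $X \subseteq S$. For subsets with $|X| \ge (2k-3)dr$ the inductive bound $\eta(\C'[X]) \ge |X|/((2k-3)d) \ge r$ dominates the contracted rank of the truncation (which is always $\le r$), and the hypothesis holds automatically. For smaller $X$, however, $\rank((\M_k[S])_r . X)$ can approach $|X|$, so a finer argument is needed — for example, using the identity $\rank(\M_k . X) = \rank(\M_k) - \rank(\M_k[V \setminus X])$ together with $\E(\M_k) \le d$ to bound the contracted rank by $|X|/d$, or restricting attention to a carefully chosen sub-sub-set of $S$ on which the density is controlled. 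The $(2k-1)$ factor (rather than the conjectured $k$) arises precisely from the loss incurred in balancing the inductive connectivity bound $\sim |X|/((2k-3)d)$ against the truncation rank $r \sim k|S|/((2k-1)d)$, and tightening this balance is what would be needed to reach the conjectured $\cT(\bigcap \EL) \le kd$.
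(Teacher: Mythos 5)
Your opening reduction is where the argument breaks, and it breaks irreparably: the rank bound $\rank(\bigcap \EL[S]) \ge \frac{k|S|}{(2k-1)d}$ is false. Take the truncated projective plane $T_k$ of \refE{ex:truncatedPP} and let $\EL$ be its associated system of $k$ partition matroids, so that $\bigcap\EL=\M(T_k)$. Here $|V(\bigcap\EL)|=|E(T_k)|=(k-1)^2$, the maximum degree gives $d=\max_i\E(\M_i)=k-1$, and since the edges of $T_k$ pairwise intersect, $\rank(\bigcap\EL)=\nu(T_k)=1$. Your inequality with $S=V$ would demand $1\ge \frac{k(k-1)^2}{(2k-1)(k-1)}=\frac{k(k-1)}{2k-1}\approx k/2$. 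Already for $k=3$ (the truncated Fano plane: $4$ edges, $d=2$, $\nu=1$) it asks for $\rank\ge 6/5$. The theorem itself survives on these examples only because $\eta(\M(T_k))=1$ is much larger than $\rank/k=1/k$; that is, \refProp{prop:etarankk} is far from tight exactly where it matters. More structurally: your route bounds $\cT(\bigcap\EL)$ by $k\cdot\E(\bigcap\EL)$, and $\E(\bigcap\EL)\ge |V|/\nu$ can genuinely be as large as $(k-1)d$ (again by $T_k$), so no version of this reduction can produce a bound better than order $k^2 d$. The secondary gap you flagged yourself --- verifying the hypothesis of \refT{thm:matcom} for small $X$ --- is therefore moot; even a complete induction would be proving a false statement.

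The paper avoids passing through the rank altogether. It writes $\bigcap\EL=\I(\ch)$ for $\ch=\bigcup_i CIRC(\M_i)$ the union of the circuit hypergraphs, lower-bounds $\eta(\I(\ch))$ directly by the domination-type quantity $\gamma^E(\ch)$ via a hypergraph version of Meshulam's lemma (\refT{thm:hyperMeshulam}), and then shows that a frugal dominating sequence of circuits, after deleting one well-chosen element per circuit per matroid (\refOb{ob:spancircuit}), spans $V$ in each $\M_i$ with a set of size at most $(2k-1)t\cdot\gamma^E(\ch)$. If you want to salvage your approach, the lesson is that any proof of a bound linear in $k$ must obtain the connectivity lower bound by a mechanism other than $\eta\ge\rank/k$.
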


	Combining with~\refT{thm:chiellM} 
	and~\refT{thm:williams} this yields
	\begin{corollary}\label{cor:chiell2kchi}
		Let $\EL=\{\M_1,\dots, \M_k\} \in \D^k$. Then 
		\[\chi_\ell(\bigcap\EL)\le (2k-1) \max_{1\le i\le k}\chi(\M_i).\]
	\end{corollary}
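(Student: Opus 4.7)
The plan is to use Proposition~\ref{prop:etarankk} to reduce the bound on $\cT(\bigcap\EL)$ to a lower bound on $\rank(\bigcap_{i=1}^k \M_i[S])$, and then to establish that rank bound via matroid intersection arguments.

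Let $S \subseteq V$ attain the maximum defining $\cT(\bigcap\EL)$, write $n = |S|$, $d = \max_i \E(\M_i)$, and $\C = \bigcap_{i=1}^k \M_i[S]$. Since $\C \in MINT_k$, Proposition~\ref{prop:etarankk} yields $\eta(\C) \ge \rank(\C)/k$, so
\[
\cT(\bigcap\EL) \;=\; \frac{n}{\eta(\C)} \;\le\; \frac{k\,n}{\rank(\C)}.
\]
It therefore suffices to prove the rank estimate $\rank(\C) \ge \frac{k\,n}{(2k-1)\,d}$. The hypothesis $\E(\M_i) \le d$ immediately yields $\rank_{\M_i}(T) \ge |T|/d$ for every $T \subseteq V$.

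For $k=2$ this follows directly from Edmonds' matroid intersection formula: $\rank(\M_1[S] \cap \M_2[S]) = \min_{X \subseteq S}\bigl(\rank_{\M_1}(X) + \rank_{\M_2}(S\setminus X)\bigr) \ge n/d$, and $n/d \ge 2n/(3d)$ matches the target when $k=2$. For $k \ge 3$, I would proceed by induction, peeling off one matroid at a time. A naive iteration of the submodular bound $\rank(\N \cap \M) \ge \rank(\N) + \rank(\M) - n$ (valid whenever $\N$ is a down-closed complex and $\M$ a matroid, by intersecting a maximum face of $\N$ with a base of $\M$) yields $\rank(\bigcap \M_i[S]) \ge k n/d - (k-1)n$, which is vacuous once $d \ge k/(k-1)$.

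The main obstacle is strengthening this crude iteration to the sharp factor $k/(2k-1)$. I expect the remedy is to feed Proposition~\ref{prop:etarankk} into intermediate stages, using Theorem~\ref{thm:matcom} to extract, at each step $j$, a base of $\M_{j+1}$ inside the partial intersection $\bigcap_{i\le j} \M_i[S]$ once one has verified that its topological connectivity is large enough; this replaces each ``$-n$'' in the submodular telescoping by a smaller correction proportional to $n/d$. The resulting constant $2k-1 = k+(k-1)$ should then reflect a single application of Proposition~\ref{prop:etarankk} to the full intersection (contributing the $k$) together with $k-1$ incremental steps of intersecting with an additional matroid (contributing the $k-1$).
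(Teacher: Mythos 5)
The reduction you propose does not work: the intermediate rank estimate $rank\big(\bigcap_{i}\M_i[S]\big)\ge \frac{k\,|S|}{(2k-1)\,d}$ is false. Take $\EL=\EL(T_3)$, the three partition matroids associated with the truncated Fano plane (\refE{ex:truncatedPP}): the ground set has $n=4$ elements (the four lines avoiding the deleted point), every part of every $\M_i$ has two elements, so $d=\max_i\E(\M_i)=2$; yet any two of these lines intersect, so $rank(\bigcap\EL)=\nu(T_3)=1$, while your bound demands $rank\ge \frac{3\cdot 4}{5\cdot 2}=\frac{6}{5}$. More generally $T_k$ gives $n=(k-1)^2$, $d=k-1$, $rank(\bigcap\EL)=1$, so the best true lower bound on the common rank in terms of $n,k,d$ is of order $n/(kd)$ (which is what $\nu^*\ge n/d$ together with \refT{thm:tauw*knuw} actually yields). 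Feeding that into \refProp{prop:etarankk} gives only $\cT(\bigcap\EL)\le k^2 d$, not $(2k-1)d$: the two inequalities $\eta\ge rank/k$ and $rank\ge n/(kd)$ are each essentially tight, but on different examples ($Q_k$ of \refE{example:P_k} for the first, $T_k$ for the second), so chaining them multiplicatively cannot produce the factor $2k-1$. Your proposed remedy via \refT{thm:matcom} is aimed at proving a rank inequality that is simply not true, so it cannot close the gap.

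The paper's proof of \refT{thm:Delta2k} (of which the corollary is an immediate consequence via \refT{thm:chiellM}, \eqref{eq:relationexpansion} and \refT{thm:williams}) avoids the detour through $rank(\bigcap\EL)$ altogether: it bounds $\eta(\bigcap\EL)$ directly from below by the domination quantity $\gamma^E$ of the union of the circuit hypergraphs $CIRC(\M_i)$, using the hypergraph Meshulam bound (\refT{thm:hyperMeshulam}), and then a span-counting argument (each step of a frugal dominating sequence spans at most $(2k-1)t(|f_j|-1)$ vertices, via \refOb{ob:spancircuit}) produces the factor $2k-1$. To get the stated constant you need a direct lower bound on $\eta$ of this kind, not one factored through the common rank.
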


	The proof of~\cite[Theorem 8.9]{matcomp} shows that for $\{\M_1, \M_2\} \in \D^2$, $\cT(\M_1\cap\M_2)\le 2\max\big(\Delta(\M_1),\Delta(\M_2)\big)$, which together with~\refProp{prop:etamatroid} and~\refT{thm:chiellM} yields the following result.
	
	\begin{theorem}
		For  $\{\M_1, \M_2\} \in \D^2$, $\chi_\ell(\M_1 
		\cap \M_2) \le 2\max_{1\le i\le 2}\chi(\M_i)$.
	\end{theorem}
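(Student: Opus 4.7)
The plan is to establish the strictly stronger bound $\cT(\M_1\cap\M_2)\le 2\max_{i}\chi(\M_i)$; once this is in hand, \refT{thm:chiellM} gives the statement immediately, since the right-hand side is an integer and the ceiling is a no-op. One may assume each $\chi(\M_i)$ is finite, since otherwise the conclusion is vacuous; in particular neither matroid has a loop, so every nonempty $S\subseteq V$ contains a nonempty common independent set.

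Fix $\emptyset\neq S\subseteq V$ and write $r:=rank((\M_1\cap\M_2)[S])$. Because $(\M_1\cap\M_2)[S]=\M_1[S]\cap\M_2[S]\in MINT_2$, \refProp{prop:etarankk} supplies $\eta((\M_1\cap\M_2)[S])\ge r/2$. It therefore suffices to show
\[ |S|\le r\cdot\max_{i}\chi(\M_i). \]

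This is where the specifically $k=2$ ingredient enters the argument: Edmonds' matroid intersection theorem yields a partition $S=T\sqcup(S\setminus T)$ realizing $r=rank_{\M_1}(T)+rank_{\M_2}(S\setminus T)$. Combined with the elementary estimate $|U|\le\chi(\M_i)\cdot rank_{\M_i}(U)$ for every $U\subseteq V$ (a $\chi(\M_i)$-coloring of $\M_i$ restricts to a cover of $U$ by independent sets each of size at most $rank_{\M_i}(U)$), this gives
\[ |S|=|T|+|S\setminus T|\le \chi(\M_1)\,rank_{\M_1}(T)+\chi(\M_2)\,rank_{\M_2}(S\setminus T)\le \max_{i}\chi(\M_i)\cdot r, \]
as desired. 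Maximizing over $S$ and invoking \refT{thm:chiellM} finishes the proof.

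The only non-routine ingredient is Edmonds' exact min-max formula, which is specific to $k=2$; its absence for $k\ge 3$ is precisely what forces the weaker $(2k-1)$ coefficient in \refT{thm:Delta2k} and explains why we expect, but cannot yet prove, an analogous factor-$k$ bound in the general case.
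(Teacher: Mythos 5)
Your proof is correct and follows essentially the route the paper intends: the paper merely cites the proof of \cite[Theorem 8.9]{matcomp} together with \refT{thm:chiellM}, and the argument you supply — Edmonds' min-max formula for $rank(\M_1\cap\M_2)$ plus the elementary bound $|U|\le\chi(\M_i)\,rank_{\M_i}(U)$ to get $\cT(\M_1\cap\M_2)\le 2\max_i\chi(\M_i)$ via \refProp{prop:etarankk} — is exactly that argument. No gaps.
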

	
	In \cite{sumofchi} this  was strengthened to: 
	\[  \chi_\ell(\M_1\cap\M_2) \le \chi(\M_1)+ \chi(\M_2).   \]
	
	\begin{conjecture}\label{conj:chik-1chi}
		For  $\EL =\{\M_1, \ldots ,\M_k\} \in \D^k$
		there holds 
		\[\chi(\bigcap \EL) \le k \max_{1\le i \le k}\chi(\M_i).\]  
	\end{conjecture}

	In~\cite{matcomp}, it was conjectured that:
	$\chi(\M_1 
	\cap \M_2) \le \max(\chi(\M_1),\chi(\M_2)+1)$, and it is tempting to extend the conjecture to:
	
	\begin{conjecture}\label{conj:generalized_A_B_conj}
		For  $\EL =\{\M_1, \ldots ,\M_k\} \in \D^k$
		there holds 
		\[ \chi(\bigcap \EL) \le (k-1) \max_{1\le i \le k}\chi(\M_i) +1. \] 
	\end{conjecture}

	We now go back to the proof of \refT{thm:Delta2k}.

\subsection{A hypergraph Mayer-Vietoris inequality}\label{subsec:Meshulamhyper}

At the core of the proof of  ~\refT{thm:Delta2k} stands a Mayer-Vietoris type inequality.
To state it, we need the following operations on hypergraphs.
For a hypergraph~$\ch$ and  edge~$e \in\ch$, we write   $\ch-e$ for $\ch\setminus\{e\}$. 
For $X\subseteq V$,  the \emph{contraction} hypergraph $\ch/X$ has  vertex set ~$V\setminus X$ and  edge set ~$\{f\setminus X: f\in E, f\not\subseteq X  \}$. 

\begin{theorem}\label{thm:hyperMV}
	Let $\ch$ be a hypergraph and let $e$ be a containment-wise minimal edge of~$\ch$. Then
	\begin{equation}\label{genmeshulam} \eta(\I(\ch)) \ge\min\Big( \eta(\I(\ch-e)),\; \eta(\I(\ch/e))+|e|-1   \Big).  
	\end{equation}
\end{theorem}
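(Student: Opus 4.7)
The plan is to use a Mayer--Vietoris / mapping cone decomposition of $\I(\ch-e)$ into two subcomplexes, one being $\I(\ch)$ and the other contractible, and then bound the connectivity of their intersection via the join formula from Proposition~\ref{prop:etajoin}. Set $\C = \I(\ch)$ and $\C' = \I(\ch - e)$; since $e$ is minimal, every proper subset of $e$ lies in $\C$, and the faces of $\C' \setminus \C$ are exactly those $\sigma$ with $e \subseteq \sigma$ and $\sigma \in \C'$. Define
\[ \overline U = \{\sigma \in \C' : \sigma \cup e \in \C'\}. \]
First I would verify that $\overline U$ is a subcomplex of $\C'$, contains every face of $\C'$ that contains $e$, and hence $\C' = \C \cup \overline U$.

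Splitting any $\sigma \in \overline U$ as $\sigma = \sigma_1 \sqcup \sigma_2$ with $\sigma_1 \subseteq e$ and $\sigma_2 \cap e = \emptyset$, a direct check shows $\sigma \in \overline U$ iff $\sigma_2 \in \I(\ch/e)$, identifying $\overline U$ with the join $2^e * \I(\ch/e)$; in particular $\overline U$ is contractible. The same splitting, restricted by the extra condition $e \not\subseteq \sigma$ (equivalently $\sigma_1 \subsetneq e$), identifies $\C \cap \overline U$ with $(\partial 2^e) * \I(\ch/e)$. Since $\partial 2^e \simeq S^{|e|-2}$ satisfies $\eta(\partial 2^e) = |e|-1$, Proposition~\ref{prop:etajoin} gives
\[ \eta(\C \cap \overline U) \ge (|e|-1) + \eta(\I(\ch/e)). \]

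The final step is topological. Because $\overline U$ is contractible, $\C'$ is homotopy equivalent to the mapping cone of the inclusion $\C \cap \overline U \hookrightarrow \C$, and the standard Mayer--Vietoris estimate in this setting yields
\[ \eta(\C) \ge \min\!\bigl(\eta(\C'),\ \eta(\C \cap \overline U)\bigr). \]
Combining with the join bound produces the desired inequality
\[ \eta(\I(\ch)) \ge \min\!\bigl(\eta(\I(\ch-e)),\ \eta(\I(\ch/e)) + |e| - 1\bigr). \]

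The main technical hurdle is the last estimate for the homotopy version of $\eta$. In homology it is immediate: Mayer--Vietoris together with $\widetilde H_*(\overline U) = 0$ gives $\eta_H(\C) \ge \min(\eta_H(\C'), \eta_H(\C \cap \overline U))$ at once. Passing to the homotopy $\eta$ requires handling $i = 0, 1$ separately (path-connectedness and simple-connectedness via a Seifert--van Kampen argument on the decomposition $\C' = \C \cup \overline U$) before Hurewicz reduces the higher-dimensional cases to homology. An alternative, more in the spirit of Section~\ref{sec:meshulamgraph}, is to set up a hypergraph Meshulam game in which CON offers a containment-wise minimal edge $e$ and NON either deletes it (passing to $\ch-e$ at no cost) or ``explodes'' it (passing to $\ch/e$ with a bonus of $|e|-1$, precisely the connectivity contribution of the sphere $\partial 2^e$); running this game should encode the inequality directly, bypassing the homology-to-homotopy translation.
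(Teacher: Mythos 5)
Your decomposition is exactly the paper's: your $\overline U$ is the paper's $\B=2^e*\I(\ch/e)$, the intersection is identified as $(2^e\setminus\{e\})*\I(\ch/e)$, and the join super-additivity is applied in the same way, so this is essentially the same proof. The ``technical hurdle'' you flag at the end is already supplied by Lemma~\ref{lemma:MV} (Lemma~2.3 of~\cite{matcomp}), which gives $\eta(\A)\ge\min(\eta(\A\cup\B),\eta(\A\cap\B))$ for \emph{arbitrary} pairs of complexes in both the homological and homotopic versions, so contractibility of $\overline U$ is not needed for that step (only for your mapping-cone phrasing of it).
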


\begin{proof}
	In ~\refL{lemma:MV}  set $\A=\I(\ch)$ and $\B=2^e*\I(\ch/e)$.
\end{proof}

To state~\refT{thm:hyperMeshulam}, we need the following definitions.
Given a hypergraph~$\ch$, a sequence $\ck=(e_1,\ldots ,e_p)$ of edges of~$\ch$ is {\em dominating} if 
for every vertex $v \in V\setminus \bigcup \ck$ there exists an edge~$f$ of~$\ch$ such that $f \setminus \bigcup \ck = \{v\}$. It is called {\em frugal } if  $|e_i \setminus \bigcup_{\ell<i}e_\ell|>1$ for every $1\le i \le p$. 
Let~$\gamma^E(\ch)$ be the minimum of $|\bigcup \ck|-|\ck|$ over all $\ck$  that are both frugal and dominating
($\infty$, if there is no such~$\ck$). 
\begin{theorem}\label{thm:hyperMeshulam}
	$\eta(\I(\ch)) \ge \gamma^E(\ch)$  for any hypergraph~$\ch$.
\end{theorem}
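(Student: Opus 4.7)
The plan is to proceed by induction on $|V(\ch)|+|E(\ch)|$, using \refT{thm:hyperMV} as the main inductive lever, in direct analogy with the greedy game argument behind \refT{thm:gammae}. The base case is $E(\ch)=\emptyset$, where $\I(\ch)=2^{V(\ch)}$ is a simplex and $\eta(\I(\ch))=\infty$ trivially dominates $\gamma^E(\ch)$.

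For the inductive step I would first dispose of the nuisance of singleton edges: if some $\{v\}\in\ch$, then no independent set of $\ch$ contains $v$, so $\I(\ch)=\I(\ch'')$ where $\ch''$ is obtained by deleting $v$ together with every edge of $\ch$ that contains $v$. A brief check shows $\gamma^E(\ch)\le\gamma^E(\ch'')$: any frugal dominating $\ck$ for $\ch''$ remains frugal in $\ch$ (frugality is a property of the sequence), and the vertex $v$, if not already covered, is dominated in $\ch$ by the singleton edge $\{v\}$ itself. Induction applied to the strictly smaller $\ch''$ closes this case.

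In the remaining case every edge has size at least $2$. Pick any containment-minimal edge $e$ of $\ch$, so $|e|\ge 2$. Applying \refT{thm:hyperMV} and the inductive hypothesis to $\ch-e$ and $\ch/e$ gives
\[
\eta(\I(\ch))\ge\min\bigl(\gamma^E(\ch-e),\;\gamma^E(\ch/e)+|e|-1\bigr).
\]
The proof then reduces to the two monotonicity inequalities
\[
\gamma^E(\ch-e)\ge\gamma^E(\ch),\qquad \gamma^E(\ch/e)+|e|-1\ge\gamma^E(\ch).
\]
The first is routine: any frugal dominating sequence for $\ch-e$ is a sequence of edges in $\ch$ and remains frugal and dominating, with the same value, in the larger hypergraph. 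For the second, I would take an optimal frugal dominating sequence $(e_1',\dots,e_q')$ in $\ch/e$, lift each $e_i'=f_i\setminus e$ to some $f_i\in E(\ch)$ with $f_i\not\subseteq e$, and prepend $e$ to obtain $\ck=(e,f_1,\dots,f_q)$ in $\ch$. The key identity $f_i\setminus(e\cup\bigcup_{j<i}f_j)=e_i'\setminus\bigcup_{j<i}e_j'$ transfers both frugality (using $|e|\ge 2$) and domination from $\ch/e$ to $\ch$, and the disjointness of $e$ and $\bigcup_i e_i'$ makes the value of $\ck$ equal to $(|e|-1)+\gamma^E(\ch/e)$.

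The main obstacle I foresee is this lifting step: one has to verify carefully that the chosen $f_i\in\ch$ glue into a single sequence whose successive ``new vertex'' sets in $\ch$ coincide with those computed in $\ch/e$, so that frugality is preserved and $|\bigcup\ck|-|\ck|$ produces exactly the extra summand $|e|-1$. Singleton edges are the secondary obstacle --- for $|e|=1$ the inequality in \refT{thm:hyperMV} yields no gain, which is precisely why the preliminary reduction to singleton-free hypergraphs is unavoidable.
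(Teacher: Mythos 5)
Your proposal is correct and is essentially the paper's argument in inductive rather than algorithmic form: the paper runs the deletion/contraction process of \refT{thm:hyperMV} (after the same singleton reduction) and at the end lifts the contracted edges back to a frugal dominating sequence of value at most $\eta(\I(\ch))$, which is exactly the content of your two monotonicity inequalities $\gamma^E(\ch-e)\ge\gamma^E(\ch)$ and $\gamma^E(\ch/e)+|e|-1\ge\gamma^E(\ch)$. Both the lifting step and the singleton reduction you flag as the delicate points are handled in the paper in precisely the way you describe, so no gap remains.
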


\begin{remark}
	This is a generalization of Proposition \ref{gammae}.
\end{remark}
\begin{proof}[Proof of~\refT{thm:hyperMeshulam}]
	Observe that if a hypergraph $\G$ has an edge of size 1, say~$\{v\}$, then
	\begin{equation}\label{eq:nosingle}
		\I(\mathcal{G})=\I\Big(\mathcal{G}[V(\mathcal{G})\setminus\{v\}]\Big).
	\end{equation}

	To prove the theorem, we apply the following algorithm. Let $\ch_0=\ch$. 
	Suppose that the hypergraph~$\ch_j$ is defined for some $j\ge 0$. Delete all singleton edges and the vertices forming them.  By~\eqref{eq:nosingle} this does not change the independence complex. All edges of the resulting 
	hypergraph ~$\ch_j^*$ have sizes at least 2.
	If ~$E(\ch_j^*)=\emptyset$  while $V(\ch_j^*)\neq \emptyset$, we stop. In this case $\I(\ch_j^*)=2^{V(\ch_j^*)}$ so that $||\I(\ch_j^*) ||\cong B^{|V(\ch^*)|-1}$ and
	\begin{equation}\label{eq:casecase1}
		\eta(\I(\ch_j^*))=\infty; 
	\end{equation}
	Or we stop if~$V(\ch_j^*)=\emptyset$, in which case
	\begin{equation}\label{eq:casecase2}
		\eta(\I(\ch_j^*))=0. 
	\end{equation}
	If neither of these  cases occurs, we proceed. Choose a containment-wise minimal edge~$f_j$ of~$\ch_j^*$.
	Then $|f_j|\ge 2$ and by~\refT{thm:hyperMV}
	
	\begin{equation*}
		\eta(\I(\ch_j^*)) \ge\min\Big( \eta(\I(\ch_j^*-f_j)),\; \eta(\I(\ch_j^*/f_j))+|f_j|-1   \Big).   
	\end{equation*}
	If $\eta(\I(\ch_j^*)) \ge  \eta(\I(\ch_j^*-f_j))$, then we set $\ch_{j+1}=\ch_j^*-f_j$; otherwise we set $\ch_{j+1}=\ch_j^*/f_j$ and in this case, the edge~$f_j$ is marked as ``contracted". And we repeat this process.
	
	Clearly we will finally stop either because the edge set becomes empty and the vertex set is not empty, or the vertex set becomes empty.
	If the former happens, by~\eqref{eq:casecase1}, inductively we have
	\[  \eta(\I(\ch))=\eta(\I(\ch_0))\ge \infty\ge \gamma^E(\ch) \]
	so that the theorem is true.
	If the latter happens, by~\eqref{eq:casecase2}, let~$e_i^*$ be those~$f_i$ that are contracted during the algorithm and assume they are $(e_1^*,\dots, e_p^*)$. Then by the construction,
	\begin{equation}\label{eq:sumei}
		\eta(\I(\ch) ) =\eta(\I(\ch_0)) \ge   \sum_{i=1}^p (|e_i^*|-1).
	\end{equation}
	Furthermore, since the deletions of vertices or edges does not change the remaining edges, while contracting~$f_i$ is deleting~$f_i$ from other edges, then we know that there exists edges $e_1,\dots, e_p$ of~$\ch_0=\ch$ such that for each $1\le i\le p$
	\[  e_{i}\setminus \cup_{\ell<i}e_\ell =e_i^*.  \]
	Let~$\ck=(e_1,\dots, e_p)$. Since~$|e_i^*|\ge 2$, we have~$\ck$ is frugal. And for every $v\in V(\ch)\setminus \bigcup\ck$,~$v$ is a deleted vertex during the algorithm, which means during the algorithm there exists an edge of size one consisting of~$v$. Thus for the same reason as above, there exists an edge~$f$ of~$\ch$ such that $f\setminus\bigcup \ck=\{v\}$. Therefore~$\ck$ is dominating.
	And we have 
	\[ |\bigcup \ck|-|\ck| = \sum_{i=1}^p|e_i\setminus \cup_{\ell<i}e_\ell| -p=\sum_{i=1}^p(|e_i^*|-1).    \]
	Therefore by~\eqref{eq:sumei}, $ \eta(\I(\ch))\ge\gamma^E(\ch),$ as desired.
\end{proof}

\subsection{Proof of~\refT{thm:Delta2k}}\label{sec:proofof2k-1}

\begin{observation}[e.g., Lemma 1.4.8 in \cite{oxley}]
	\label{ob:spancircuit}
	For any circuit $C$ in a matroid~$\M$, any ~$v\in C$, and any subset~$T$ of the ground set of~$\M$, 
	\[span_\M(T\cup C)=span_{\M}(T\cup C\setminus\{v\}).  \]
\end{observation}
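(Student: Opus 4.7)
The plan is to reduce the equality to the single claim that $v\in span_\M(T\cup C\setminus\{v\})$. Once this is known, the matroid closure-absorption property gives
\[
span_\M\big((T\cup C\setminus\{v\})\cup\{v\}\big)=span_\M(T\cup C\setminus\{v\}),
\]
and since $(T\cup C\setminus\{v\})\cup\{v\}=T\cup C$, the observation follows immediately.

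To establish $v\in span_\M(T\cup C\setminus\{v\})$, I would apply the definition of $span_\M$ to $A:=T\cup C\setminus\{v\}$ with witness $S:=C\setminus\{v\}$. Because $C$ is a circuit, the set $C\setminus\{v\}$ is independent---this is precisely the minimality of $C$ as a dependent set---so $S\in \M[A]$. But $S\cup\{v\}=C\notin \M$, so $v$ satisfies the condition in the paper's definition of $span_\M(A)$ and therefore lies in $span_\M(T\cup C\setminus\{v\})$, as desired.

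The absorption property invoked in the first paragraph is the standard fact that if $v\in span_\M(A)$, then $span_\M(A\cup\{v\})=span_\M(A)$. One inclusion is just monotonicity of $span_\M$; for the other, every element of $A\cup\{v\}$ lies in $span_\M(A)$, so $span_\M(A\cup\{v\})\subseteq span_\M(span_\M(A))=span_\M(A)$, using idempotency of closure (a standard matroid axiom, easily derivable from the exchange axiom applied to maximal independent subsets of $A$). Thus the ``hard part'' is nothing more than unpacking the definition of a circuit; no topology or deeper combinatorial content is needed, which is why this is packaged as a trivial observation rather than a lemma.
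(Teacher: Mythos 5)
Your proof is correct, but it takes a genuinely different route from the paper's. The paper argues both inclusions directly from its definition of $span_\M$: it handles the element $v$ exactly as you do (via the witness $C\setminus\{v\}\in\M$ with $C\notin\M$), but for a general $u\in span_\M(T\cup C)\setminus(T\cup C)$ it performs an explicit exchange argument --- starting from a base $B$ of $\M[T\cup C]$ with $\{u\}\cup B\notin\M$, it extends $C\setminus\{v\}$ by elements of $B\setminus C$ to an independent set $I\subseteq T\cup C\setminus\{v\}$ with $|I|=|B|$ and checks $\{u\}\cup I\notin\M$ by maximality of $B$. You instead reduce the whole statement to the single fact $v\in span_\M(T\cup C\setminus\{v\})$ and then invoke the closure-absorption property, which you derive from monotonicity and idempotency of $span_\M$. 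This is shorter and more conceptual, but it outsources the real content to idempotency of the closure operator, which the paper never establishes for its particular definition of $span_\M$ (it is of course a standard matroid fact, equivalent to the usual rank-based closure, so this is a matter of self-containedness rather than correctness). The paper's version buys a proof that uses nothing beyond the definition of $span_\M$ and the augmentation axiom; yours buys brevity at the cost of importing standard closure axioms.
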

The \emph{circuit hypergraph} $CIRC(\M)$ of a matroid $\M$ is the set of circuits in $\M$.  
By the definition of ``circuit'', if~$\ch=CIRC(\M)$ then $\M=\I(\ch).$

\begin{proof}[Proof of~\refT{thm:Delta2k}]
Let $t=\max_{1\le i\le k}\E(\M_i)$, $\C=\cap_{i=1}^k\M_i$, and $V$ be the common ground set of the matroids.
First we prove that
\[  \frac{|V|}{\eta(\C)}\le (2k-1)t. \]
Let~$\ch_i=CIRC(\M_i)$ for $1\le i\le k$ and
$\ch=\cup_{i=1}^k\ch_i$. Then
\[ \C=\I(\ch). \]

We may assume that $\gamma^E(\ch)$ is finite, otherwise by~\refT{thm:hyperMeshulam},
$\eta(\C)=\infty$ and we are done.   
Let $\ck=(e_1,\dots,e_p)$ be a dominating and frugal sequence of edges of~$\ch$ for which
\begin{equation} \label{eq:gammeEeq}
	\gamma^E(\ch)= |\bigcup\ck|-|\ck|=\sum_{j=1}^p|e_j\setminus\cup_{\ell<j}e_\ell|-p=\sum_{j=1}^p(|e_j\setminus\cup_{\ell<j}e_\ell|-1). 
\end{equation}
Since~$\ck$ is dominating, for any $v\in V\setminus\bigcup\ck$ there exists an edge~$f\in\ch$  such that
\[f\setminus\bigcup\ck=\{v\}. \]
Let~$i$ be such that $f\in\ch_i= CIRC(\M_i)$. Then $v\in span_{\M_i}(f\setminus\{v\})$, and then
\[ v\in \bigcup\ck\cup f\subseteq span_{\M_i}(\bigcup\ck \cup f\setminus\{v\})=span_{\M_i}(\bigcup\ck).  \]
Therefore 
\begin{equation}\label{eq:Vcontain}
	V\subseteq\cup_{i=1}^k span_{\M_i}(\bigcup\ck).  
\end{equation}
For each~$1\le j\le p$, noting that~$e_j$ is an edge of~$\ch=\cup_{i=1}^k\ch_i$, let
\[ L_j=\{i\mid \text{$e_j$ is an edge of~$\ch_i$}\}. \]
For each~$1\le i\le k$ and each index~$j$ such that $i\in L_j$, we choose an element~$v_{i,j}\in e_j\setminus\cup_{\ell<j}e_\ell$. Recall that $i\in L_j$ means that~$e_j$ is a circuit in~$\M_i$. Hence, by ~\refOb{ob:spancircuit}, we can delete $(v_{i,j})_{\{j:i\in L_j\}}$ one by one, in descending order of $j$, to obtain: 
\[ span_{\M_i}(\cup_{j:i\in L_j}e_j\cup\cup_{j:i\not\in L_j}e_j)=span_{\M_i}\Big(\cup_{j:i\in L_j}(e_j\setminus \{v_{i,j}\})\cup\cup_{j:i\not\in L_j}e_j\Big) \]
and then
\begin{align*}
	&span_{\M_i}(\bigcup\ck)\\
	=&span_{\M_i}(\cup_{j:i\in L_j}e_j\cup\cup_{j:i\not\in L_j}e_j)\\
	=&span_{\M_i}\Big(\cup_{j:i\in L_j}(e_j\setminus \{v_{i,j}\})\cup\cup_{j:i\not\in L_j}e_j\Big)\\
	=&span_{\M_i}\Big(\cup_{j:i\in L_j}\big(e_j\setminus (\{v_{i,j}\}\cup \cup_{\ell<j}e_\ell)\big)\cup\cup_{j:i\not\in L_j}(e_j\setminus \cup_{\ell<j}e_\ell)\Big), 
\end{align*}
which has size at most
\begin{equation}\label{eq:sizeofspanMi}
	t\Big(\sum_{j:i\in L_j}(|e_j\setminus \cup_{\ell<j}e_\ell|-1)+\sum_{j:i\not\in L_j}|e_j\setminus\cup_{\ell<j}e_\ell|\Big). 
\end{equation}
Let $f_j=e_j\setminus \cup_{\ell<j}e_\ell$.
By the frugality,  $|f_j|=|e_j\setminus\cup_{\ell<j}e_\ell|\ge 2$, so that
\begin{equation}\label{eq:ezboundej}
	|f_j|\le 2(|f_j|-1).
\end{equation}
Since for each~$j$, $e_j\in \ch_i$ for some $i\in L_j$, there are at most~$k-1$ many indices~$i'$ in~$[k]$ such that $i'\not\in L_j$
Then by~\eqref{eq:Vcontain},~\eqref{eq:sizeofspanMi}, and~\eqref{eq:ezboundej} we have
\begin{align*}
	|V|&\le t\sum_{i=1}^k\Big(\sum_{j:i\in L_j}(|f_j|-1)+\sum_{j:i\not\in L_j}|f_j|\Big)\\
	&\le t\sum_{i=1}^k\Big(\sum_{j:i\in L_j}(|f_j|-1)+\sum_{j:i\not\in L_j}2(|f_j|-1)\Big)\\
	&= t\sum_{j=1}^p\Big(\sum_{i:i\in L_j}(|f_j|-1)+\sum_{i:i\not\in L_j}2(|f_j|-1)\Big)\\
	&\le  t\sum_{j=1}^p \Big(  (|f_j|-1)+(k-1)2(|f_j|-1)         \Big)\\
	&= (2k-1)t\sum_{j=1}^p(|f_j|-1). 
\end{align*}
On the other hand, by~\eqref{eq:gammeEeq}
\[\eta(\C)=\eta(\I(\ch))\ge  \gamma^E(\ch)=\sum_{j=1}^p(|e_j\setminus\cup_{\ell<j}e_\ell|-1)  =\sum_{j=1}^p(|f_j|-1).\] 
We have thus shown
\[ \frac{|V(\C)|}{\eta(\C)}\le (2k-1)t. \]
Applying this argument to $\ch[S]$, for any $S \subseteq V$, yields

\[\cT(\C)=\max_{\emptyset\neq S\subseteq V}\frac{|S|}{\eta(\C[S])}\le (2k-1)t,\]
proving the theorem.
\end{proof}

\section{Fractional coloring and polytopes}\label{sec:fractionalcoloring}
We turn to ($\vw$-)fractional coloring (see \refD{def:fraccoloring}).
In this section and in its two successors we link ($\vw$-)fractional colorings to polytopes. 
The aim is to generalize the following result, proved in~\cite{matcomp},  to more than two matroids, and to vertex-weighted matroids. 
\begin{theorem}\label{thm:chistarmin}
For any pair of matroids $\M$ and $\N$ on the same ground set,
\begin{equation}\label{chistar}
	\chi^*(\M\cap \N)=\max(\chi^*(\M),\; \chi^*(\N)).\end{equation}
	\end{theorem}
	
	In fact, this was merely testimony to  ignorance --- the authors were unaware of \refT{thm:edmonds2matroidintersection}, of which it is a direct corollary. 
	

	\begin{observation}\label{entering_P}
For any complex $\C$, 
$\chi^*(\C) \le \frac{1}{t}$
if and only if 
$t\vOne \in P(\C)$.
\end{observation}

\begin{proof}
$t\vOne \in P(\C)$  means that $t\vOne 
\le \sum_{S\in\C} \lambda_S\mathbf{1}_{S}$ where $\lambda_S\ge 0$ and $\sum_{S\in\C} \lambda_S=1$, meaning that $\vOne \le \sum (\frac{\lambda_S}{t}) \mathbf{1}_{S}$ meaning that $\chi^*(\C) \le \sum \frac{\lambda_S}{t}= \frac{1}{t}$.
\end{proof}

Combined with \refT{thm:edmonds2matroidintersection}, this yields~\refT{thm:chistarmin}. There is nothing special about the vector~$\vec{1}$ --- the same argument works for every vector $\vw \in \mathbb{R}^V$. Below,~$\vw$ always denotes such a vector. It is also viewed as a weight function on~$V$, whence the arrow above is sometimes omitted.


For a closed down polytope $Z\subseteq\rvp$, let 
\[\psi(Z,\vh) := \min(\{t\in\mathbb{R}^+ \mid \vh/t \in Z\}).\]

\begin{theorem}\label{thm:psiformula}
$\chi^*(\C,\vh)= \psi(P(\C),\vh)$. 
\end{theorem}

\begin{proof}
Let $\psi(P(\C),\vh)=t$. Then $\frac{\vh}{t}\in P(\C)$ implies that there exists $\lambda_S\ge 0$ for all $S\in \C$ so that   $\vh=\sum_{S\in\C}t\lambda_S\mathbf{1}_{S}$, where  $\sum_{S\in\C}\lambda_S=1$. Then, the function $f:\C\rightarrow\mathbb{R}_{\ge 0}$ defined as $f(S):=t\lambda_S\ge 0$ satisfies that for each $v\in V(\C)$, $\sum_{S\in \C:v\in S}f(S)=\sum_{S\in\C}t\lambda_S\mathbf{1}_S(v)=w(v)$. Thus~$f$ is an $\vh$-fractional coloring.
Since $\sum_{S\in\C}f(S)=\sum_{S\in\C} t\lambda_S=t$, we have $\chi^*(\C,\vh)\le t=\psi(P(\C),\vh)$.

For the other direction, let $f:\C\rightarrow\mathbb{R}_{\ge 0}$ be an $\vh$-fractional coloring satisfying $\sum_{S\in\C}f(S)=\chi^*(\C,\vh)=a$. Let $\lambda_S=\frac{1}{a}f(S)$ for every $S\in\C$, and consider the function $g=\sum_{S\in\C}\lambda_S\mathbf{1}_S: V(\C)\rightarrow\mathbb{R}_{\ge 0}$. This is a convex combination of the $(\mathbf{1}_S)_{S\in\C}$ and thus $\vec{g}\in P(\C)$. Since~$f$ is an $\vh$-fractional coloring, $g(v)=\sum_{S\in\C:v\in S}\lambda_S=\sum_{S\in\C:v\in S}\frac{1}{a}f(S)\ge\frac{w(v)}{a}$ for each $v\in V(\C)$. So, $0\le \frac{\vh}{a}\le \vec{g}$. Since $P(\C)$ is closed-down, we have $\frac{\vh}{a}\in P(\C)$ and thus, $\psi(P(\C),\vh)\le a=\chi^*(\C,\vh)$.
\end{proof}

The theorem says that~$\chi^*(\C,\vh)$ is the smallest $t$ for which  $\vh/t \in P(\C)$.  On the other hand, for~$\EL=\{\M_1,\dots,\M_k\}\in \D^k$, to enter~$\cap_{i=1}^kP(\M_i)$ along the direction~$\vh$ towards the origin, you need to enter all ~$P(\M_i)$s. This is summarized in the following corollary, where as usual, $\lambda A=\{\lambda \vec{a} \mid \vec{a} \in A\}$.

\begin{corollary}\label{cor:Pchiequiv}
Let $\C_1,\dots, \C_k$ be complexes on~$V$ and $\C=\cap_{i=1}^k\C_i$. The following are equivalent for any $\lambda>0$:
\begin{enumerate}
	\item $\lambda P(\C)\supseteq\cap_{i=1}^kP(\C_i)$.
	\item $\chi^*(\C,\vh)\le \lambda\max_{1\le i\le k}\chi^*(\C_i,\vh)$ for every $\vh\in \rvp$.
\end{enumerate}
\end{corollary}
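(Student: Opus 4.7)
The plan is to invoke \refT{thm:psiformula} to replace every fractional chromatic number by the geometric quantity $\psi$. Under this translation, the inequality $\chi^*(\C,\vh)\le \lambda\max_i\chi^*(\M_i,\vh)$ becomes a statement about which scaled copies of $\vh$ enter the polytope $P(\C)$, and the inclusion $\lambda P(\C)\supseteq\bigcap_i P(\M_i)$ is manifestly of the same flavor. The one structural ingredient I shall use beyond \refT{thm:psiformula} is that each $P(\M_i)$ and $P(\C)$ is closed-down, so that $\vh/s\in P(\M_i)$ whenever $s\ge \psi(P(\M_i),\vh)$: indeed, increasing $s$ shrinks $\vh/s$ coordinatewise, and a closed-down polytope contains all vectors dominated by one of its members.

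For $(1)\Rightarrow(2)$, I would fix $\vh\in\rvp$ and set $t:=\max_{1\le i\le k}\chi^*(\M_i,\vh)$. By \refT{thm:psiformula} together with the closed-down property just noted, $\vh/t\in P(\M_i)$ for every $i$, hence $\vh/t\in\bigcap_{i=1}^k P(\M_i)\subseteq \lambda P(\C)$ by hypothesis. Consequently $\vh/(\lambda t)\in P(\C)$, and a second application of \refT{thm:psiformula} gives $\chi^*(\C,\vh)=\psi(P(\C),\vh)\le\lambda t=\lambda\max_i\chi^*(\M_i,\vh)$, as desired.

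For $(2)\Rightarrow(1)$, take any $\vh\in\bigcap_{i=1}^k P(\M_i)$. Then $\psi(P(\M_i),\vh)\le 1$, so $\chi^*(\M_i,\vh)\le 1$ for each $i$ by \refT{thm:psiformula}. Hypothesis (2) then yields $\chi^*(\C,\vh)\le\lambda$, and one more invocation of \refT{thm:psiformula} gives $\vh/\lambda\in P(\C)$, i.e.\ $\vh\in\lambda P(\C)$, establishing the inclusion.

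I do not expect any real obstacle here: the argument is essentially a translation between the two dialects supplied by \refT{thm:psiformula}, and the only point that needs a glance is the degenerate case $\vh=\vec 0$, where both sides of (2) vanish and $\vec 0\in\lambda P(\C)$ trivially since $\emptyset\in\C$. One should also tacitly use that the ``$\min$'' in the definition of $\psi$ is attained, which follows from the compactness of $P(\C)$ and $P(\M_i)$.
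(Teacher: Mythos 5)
Your argument is correct and is essentially the paper's own proof: both directions translate $\chi^*$ into $\psi$ via \refT{thm:psiformula} and then shuttle $\vh/t$ between $\bigcap_i P(\M_i)$ and $\lambda P(\C)$ in exactly the same way. Your explicit remarks on the closed-down property (needed to pass from $\psi(P(\M_i),\vh)\le t$ to $\vh/t\in P(\M_i)$) and on the degenerate case $\vh=\vec 0$ are minor points the paper leaves implicit, but they do not change the route.
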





In the  spirit of \refC{cor:chiell2kchi} and \refCon{conj:chik-1chi}, we can ask for bounds on the ratio between the fractional chromatic number of the intersection of the  matroids and the fractional chromatic number of each matroid individually. 
It is  a viewpoint that gives rise to~\refCon{conj:chi*kint}. Another motivation of~\refCon{conj:chi*kint} is discussed in detail in~\refS{sec:weightedmatroid}.
\refCon{conj:chi*kint} is known for general~$k$  if~$k-1$ is replaced by~$k$ (see \refC{cor:kchi*}), and for $k=2$ as it is (this follows from  \refC{cor:Pchiequiv} and~\refT{thm:edmonds2matroidintersection}). In the next section we apply  a result from ~\cite{linhares2020approximate} to  show it for $k=3$. 



\section{Two more polytopes and polytope ratios}

This section discusses an equivalent version of~\refCon{conj:chi*kint}, involving polytopes.




Given a complex $\C$, let
\[Q(\C)=\{ f\in \mathbb{R}^{V(\C)}_{\ge 0} \mid f[S]\le rank_{\C}(S) \text{ for every }S\subseteq V(\C)\}.\]        

\begin{theorem}\cite{ed1970}\label{thm:qisp} 
	For a matroid $\M$ we have  $P(\M)= Q(\M).$
\end{theorem}

For general complexes the two polytopes can be arbitrarily far apart. For functions $f,g:V\rightarrow \mathbb{R}$, $f\cdot g$ is the inner product $\sum_{v\in V}f(v)g(v)$.
\begin{observation}\label{ob:lambdaPnotQ}
	For any ~$\lambda>0$, there exists a complex $\C$ such that  $\lambda P(\C)\not\supseteq Q(\C)$. 
\end{observation}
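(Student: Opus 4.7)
The plan is to construct, for each $\lambda > 0$, a complex $\C$ together with a vector $\vh \in Q(\C)$ that is not in $\lambda P(\C)$, i.e., to exhibit a family $(\C_n, \vh_n)$ for which the ratio $\psi(P(\C_n),\vh_n)/\psi(Q(\C_n),\vh_n)$ grows without bound. Since matroids satisfy $P(\M) = Q(\M)$, and since by \refT{thm:edmonds2matroidintersection} the same holds for intersections of two matroids, any gap has to come from complexes that are ``non-matroidal'' in a quantitative sense. For $k \ge 3$ the intersection $\bigcap\EL$ of $k$ matroids is no longer described by rank-type inequalities alone, so this is the natural place to look.

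Concretely, I would take $\EL=\{\M_1,\dots,\M_k\}$ to be $k$ partition matroids, so that $\bigcap\EL=\M(\ch)$ is the matching complex of a $k$-partite $k$-uniform hypergraph $\ch$ as in the correspondence of \refS{sec:introduction}. For a well-chosen family $\ch_k$ (e.g.\ the $k$-uniform affine plane $Q_k$ from \refE{example:P_k}, or a truncation thereof), the rank function has many pairwise-intersecting ``stars'' of edges, which imposes weak constraints on $Q(\M(\ch_k))$, while the facet structure of the matching polytope $P(\M(\ch_k))$ forces the convex decomposition of certain vectors to use a disproportionately large total mass. As the parameter grows, the discrepancy becomes unbounded. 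This is exactly the type of construction made explicit in the subsequent \refE{ex:PnotQpartition} and \refE{ex:truncatedPP}, and the observation can be seen as a qualitative summary of those examples.

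The key quantitative step is lower-bounding $\psi(P(\C),\vh)$: writing $\vh/\lambda = \sum_M \mu_M \mathbf{1}_M$ with $\sum\mu_M \le 1$, one uses the mass identity $\sum_M \mu_M |M| = \sum_v \vh(v)/\lambda$ together with the bound $|M|\le \nu(\ch)$, plus the pairwise-intersection structure (which forces the matchings that can appear in the support of such a decomposition to be small), to deduce $\sum\mu_M \ge c_k$ with $c_k\to\infty$. The matching upper bound $\psi(Q(\C),\vh)=1$ follows directly from the rank inequality $\vh[S]\le \nu(\ch[S])$ on the chosen support $S$.

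The main obstacle is producing an explicit, clean family of hypergraphs where this lower bound is simple to verify --- small intersecting configurations yield $P=Q$, so one needs a family rich enough that the non-rank facets of the matching polytope are visibly restrictive. A natural route is an iterated or ``blown-up'' projective-plane-like construction that retains both high intersection among edges and an unbounded number of them; Examples~\ref{ex:PnotQpartition} and~\ref{ex:truncatedPP} are presumably designed to carry out this calculation.
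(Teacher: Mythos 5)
Your proposal is a plan rather than a proof: it never produces a concrete complex $\C$ and vector $h$ with $h\in Q(\C)\setminus\lambda P(\C)$, and you yourself flag that ``producing an explicit, clean family \dots is the main obstacle.'' Worse, the candidate families you name do not obviously (or do not at all) deliver the gap. For the truncated projective plane $T_k$ the paper notes explicitly that $P(\C)=Q(\C)=\{x\ge 0: x[V]\le 1\}$, since every two edges meet and hence the matching complex has rank $1$; the large gap there is between $R(\EL)$ and $P(\C)$, not between $Q(\C)$ and $P(\C)$. Example~\ref{ex:PnotQpartition} shows only that $P\neq Q$ is possible for a flag complex, with a bounded discrepancy; neither example is ``designed to carry out'' an unbounded $Q:P$ separation. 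Your quantitative sketch also has a problem: the mass identity $\sum_M\mu_M|M|=h[V]/\lambda$ combined with $|M|\le\nu$ gives $\sum_M\mu_M\ge h[V]/(\lambda\nu)$, but membership of $h$ in $Q(\C)$ already forces $h[V]\le \nu$, so this bound is at most $1/\lambda$ and proves nothing; everything would have to come from the unproven claim that only ``small'' matchings can appear in the support, and you give no mechanism for that.

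For contrast, the paper's construction is short, explicit, and not a matching complex at all: pick $k$ with $\sum_{i=2}^{k}\frac1i>\lambda$, let $v\in\R^n$ ($n=2+\cdots+k$) have $i$ coordinates equal to $\frac1i$ for each $2\le i\le k$, and let $\C=\{A\subseteq[n]: v\cdot\mathbf 1_A\le 1\}$, a threshold (knapsack) complex. The separating functional is $v$ itself: every $u\in P(\C)$ satisfies $v\cdot u\le 1$, while $v\cdot v=\sum_{i=2}^k\frac1i>\lambda$, so $v\notin\lambda P(\C)$; and a short counting argument (any $S$ with $v[S]\le r$ contains at least $r$ elements of weight at most $\frac1r$, which form a face) shows $rank_\C(S)\ge v[S]$, i.e.\ $v\in Q(\C)$. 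If you want to salvage your route, you would need to actually verify, for some explicit hypergraph family, both a rank upper bound certifying $h\in Q$ and a facet of the matching polytope violated by $h/\lambda$ --- neither of which is done in your writeup.
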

\begin{proof}
	We construct a complex~$\C$ and a vector $v\in\rvp$ such that $v\in Q(\C) \setminus \lambda P(\C)$.
	
	Let $k\in\mathbb{Z}^+$ be such that $\frac{1}{2}+\frac{1}{3}+\cdots+\frac{1}{k}> \lambda$, and let $n = 2+3+\cdots+k$. We construct the complex $\C$ on $V=[n]$ in the following way. 
	Let \[v = (\frac{1}{2}, \frac{1}{2}, \frac{1}{3}, \frac{1}{3}, \frac{1}{3}, \frac{1}{4}, \frac{1}{4}, \frac{1}{4}, \frac{1}{4}, ..., \frac{1}{k}, ..., \frac{1}{k}) \in \mathbb R^n.\]
	Let \[\C = \{ A\subseteq [n] :  v\cdot\mathbf{1}_A \le 1\}.\]
	
	Since any vector $u\in P(\C)$ is a convex combination of $\{\mathbf{1}_A\mid A\in \C\}$, then $v\cdot u\le 1$. Therefore $u\in \lambda P(\C)$ implies $v\cdot u\le \lambda$. Since
	\[v \cdot v = \sum_{i=2}^k \frac{1}{i}\cdot\frac{1}{i}\cdot i > \lambda,\] 
	we have $v \not\in \lambda P(\C)$.
	
	On the other hand, for $S \subseteq [n]$, let~$r$ the minimum integer such that $-1<v[S]\le r$.
	Then $S$ must have at least $r$ members with $v$-value at most $\frac{1}{r}$:
	if not, 
	\[v[S]=v[\{j\in S:v(j)>1/r\}]+v[\{j\in S: v(j)\le 1/r \}]   \le\sum_{i=2}^{r-1}\frac{1}{i}\cdot i+ r\frac{1}{r}\le r-1,\] a contradiction.
	Therefore the $r$ members form a face of~$\C$ and we have $rank_\C(S)\ge r\ge v[S]$,
	which implies $v \in Q(\C)$.     
\end{proof}

$P(\C)\neq Q(\C)$ is possible even  in flag complexes, as the following example shows. 
\begin{example}\label{ex:PnotQpartition}
	Define a complex~$\C$ on $V=\{x_1,x_2,\dots,x_9, y_1,y_2,y_3, z_1,z_2,z_3\}$ and the maximal faces of~$\C$ are $\{y_1,y_2,y_3\},\{ z_1,z_2,z_3 \}$, and $\{y_i, x_{3(i-1)+j}\}$, $\{z_j, x_{3(i-1)+j}\}$ for $1\le i,j\le 3$. It can be verified that this complex is 2-determined so it is a flag complex. Therefore by~\refProp{prop:four_equiv}, $\C$ is the intersection of partition matroids.
	
	Consider the vector $w$ such that $w(x_i)=\frac{1}{9}$ for each $1\le i\le 9$ and $w(y_j)=w(z_j)=\frac{1}{4}$ for each~$1\le j\le 9$.
	Then we claim that
	\[  w\in Q(\C)\setminus P(\C). \]
	
	To see that $w\in Q(\C)$, for every $U\subseteq V$ such that $rank_{\C}(U)=1$, the maximal such~$U$ in containment relation must be $\{x_t\}_{1\le t\le 9}$ or consist of four elements of $\{x_t\}_{1\le t\le 9}$, one elements of $\{y_1,y_2,y_3\}$, and one element of $\{z_1,z_2,z_3\}$. In either case we have $w[U]\le 1$. Since $rank(\C)=3$, for every set $U$ with $rank_\C(U)=3$, we have $w[U]\le w[V]\le \frac{1}{9}\cdot 9+\frac{1}{4}\cdot 6 \le 3$.
	The maximal rank 2 set in~$\C$ has the form 
	\[\{ y_{i}, y_{i'}, z_{j},z_{j'}, x_1,x_2,\dots, x_9  \}\] 
	for $i\neq i'$ and $j\neq j'$, which has $w$-weight $\frac{1}{9}\cdot 9+\frac{1}{4}\cdot 4 \le 2$, which implies for any~$U$ with $rank_{\C}(U)=2$, $w[U]\le 2$. Therefore $w\in Q(\C)$.
	
	To see that $w\not\in P(\C)$, we are going to prove that $w$ cannot be written as a convex combination $\sum \lambda_i\mathbf{1}_{S_i}$ for $S_i\in \C$. First, the coefficients of $\{y_1,y_2,y_3\}$ and $\{z_1,z_2,z_3\}$ cannot be positive, since the coefficients of the faces containing any $x_i$ for $1\le i\le 9$ should sum up to $\frac{1}{9}\cdot 9=1$. Second, for the sum of the coefficients of the faces of the form $\{x_p,y_i\}$ or $\{x_q,z_j\}$, when considering from the $\{x_t\}_{1\le t\le 9}$ side, the sum is at most $\frac{1}{9}\cdot 9=1$, but it should be at least $\frac{1}{4}\cdot 6=\frac{3}{2}$ when considering from the $\{y_1,y_2,y_3,z_1,z_2,z_3\}$ side, which is impossible. Therefore $w\not\in P(\C)$.
\end{example}

There is a third polytope associated with  $k$-tuples of matroids.
For $\EL=\{\M_1, \ldots ,\M_k\} \in \D^k$, let 
\[R(\EL)=\cap_{i=1}^k P(\M_i).\]

\refT{thm:qisp} implies  
\begin{equation}\label{eq:containmentPQR}
	P(\bigcap \EL)\subseteq Q(\bigcap \EL)\subseteq R(\EL). 
\end{equation}

\refT{thm:edmonds2matroidintersection} 
says that for $k=2$  equality holds throughout. 
By ~\refOb{ob:lambdaPnotQ} and ~\refE{ex:PnotQpartition}  (see also ~\refE{ex:truncatedPP} below) this is no longer true for $k>2$.

\begin{definition}\label{polytopesratio}
	Given  closed-down polytopes $A,B$ in $\rvp$, let 
	\[B:A=\min\{t\in\mathbb{R}_{\ge 0} \mid  tA \supseteq B\}\]
	(here, as usual, $\min \emptyset = \infty$).
\end{definition}

By~\refC{cor:Pchiequiv}, another formulation of~\refCon{conj:chi*kint} is the following.
\begin{conjecture}\label{conj:k-1PR}
	For any $\EL\in \D^k$,
	\[R(\EL):P(\bigcap\EL)\le k-1. \]
\end{conjecture}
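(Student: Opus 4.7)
The plan is to exploit the equivalence just stated: combining \refT{thm:ratioRP=nu^*wratio} with \refT{thm:nu*w=tau*w}, the bound $R(\EL):P(\bigcap\EL)\le k-1$ reduces to proving the weighted matroidal Ryser inequality $\nu^*_w(\EL)\le (k-1)\nu_w(\EL)$ for every $w\in\rvp$, which is precisely \refCon{conj:fksmatroid}. So the task collapses to that single inequality, and I would attack it directly.

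For partition matroids, the bound was obtained by translating the setup into a $k$-partite hypergraph and then invoking F\"uredi--Kahn--Seymour (\refT{thm:FKS}). For general matroids, the natural goal is a ``matroidal F\"uredi--Kahn--Seymour''. Write $\C=\bigcap_{i<k}\M_i$; by \refProp{prop:etarankk} we have $\eta(\C[S])\ge rank_\C(S)/(k-1)$ for every $S\subseteq V$. Given $w\in\rvp$, pick an optimal matroidal fractional matching $x\in R(\EL)$ with $x\cdot w=\nu^*_w(\EL)$. I would try to combine this connectivity bound on $\C$ with \refT{thm:matcom} applied to $\M_k$ and $\C$ on a suitable $w$-weighted support of $x$, so as to extract a common independent set $I\in\bigcap\EL$ with $w(I)\ge x\cdot w/(k-1)$. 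An appealing intermediate step is to uncross an optimal dual solution $(y_i(U))_{1\le i\le k,\ U\subseteq V}$ of the LP for $\tau^*_w(\EL)$ into a laminar family for each matroid (standard for a single matroid by an uncrossing argument), then find a common minimal member across the $k$ laminar families on which one of the $\M_i$ is rank-redundant and can be deleted, inducting on $k$ to pick up the factor $k-1$.

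The principal obstacle is that the naive topological route only recovers the trivial bound $\nu^*_w\le k\nu_w$ already proved in \refT{thm:tauw*knuw}: plugging the connectivity $rank_\C/(k-1)$ into \refT{thm:matcom} loses one further unit when passing to the intersection with $\M_k$. Saving this last unit is exactly what F\"uredi--Kahn--Seymour accomplishes in the hypergraph setting, via the peelability of one side of a $k$-partite hypergraph, and no analogue of such peeling is currently known for a $k$-tuple of general matroids. The laminarity-plus-common-leaf strategy above is plausible but the ``common leaf'' step is the real crux; resolving it for general matroids would likely also settle the classical Ryser conjecture through the translation used for partition matroids, which is why I expect \refCon{conj:k-1PR} to be genuinely hard and not amenable to the tools assembled in the present paper alone.
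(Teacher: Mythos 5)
You have correctly recognized that this statement is a \emph{conjecture} in the paper, not a theorem: the paper itself offers no proof of it in general. What the paper does establish is exactly the chain of equivalences you invoke --- by Theorem~\ref{thm:ratioRP=nu^*wratio} together with Theorem~\ref{thm:nu*w=tau*w}, the bound $R(\EL):P(\bigcap\EL)\le k-1$ is equivalent to Conjecture~\ref{conj:fksmatroid}, i.e.\ to $\tau^*_w(\EL)=\nu^*_w(\EL)\le(k-1)\nu_w(\EL)$ for all $w\in\rvp$ --- plus the partition-matroid case via F\"uredi--Kahn--Seymour and the weaker unconditional factor $k$ from Theorem~\ref{thm:tauw*knuw}. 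Your reduction therefore coincides with the paper's own framing, and your account of which special cases are known is accurate.

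Your attempt to go further does not close the gap, and to your credit you say so explicitly. Two specific points. First, the topological route you sketch (Proposition~\ref{prop:etarankk} applied to $\C=\bigcap_{i<k}\M_i$ followed by Theorem~\ref{thm:matcom} against $\M_k$) is correctly diagnosed as only recovering the factor $k$: the connectivity bound $\eta(\C[S])\ge rank_\C(S)/(k-1)$ is already consumed in full by the Rado-type condition of Theorem~\ref{thm:matcom}, and there is no slack left to absorb the final matroid, which is precisely why Theorem~\ref{thm:tauw*knuw} is proved by a direct exchange argument rather than topologically. Second, the uncrossing-into-laminar-families strategy is a reasonable heuristic (uncrossing an optimal dual of the rank LP into a chain is standard for a single matroid), but the ``common minimal member across $k$ laminar families on which one matroid is rank-redundant'' step is unsubstantiated; nothing in the paper or in your proposal supplies it, and as you note it would essentially subsume the weighted Ryser conjecture. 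So the proposal should be read as a correct situational analysis rather than a proof: it reproduces the paper's equivalence and known cases, and leaves the conjecture open, as the paper does.
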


\section{Polytope ratios and weighted matroids}\label{sec:weightedmatroid}

To study \refCon{conj:k-1PR}, we need vertex-weighted versions of matchings and cooperative coverings. For an algorithmic viewpoint of this subject see~\cite{frank}.
Throughout this section we have at hand a weight function~$w \in \mathbb{R}^V_{\ge 0}$.

Given a  matroid~$\M$ let~$\cnu_w(\M)$ be the maximum sum of weights in an edge of~$\M$. 
For a function  $f: V\rightarrow\mathbb{R}$ the \emph{$\cm$-span $f^\M$} of $f$ is a function, defined by
\[f^{\M}(v)=\max_{T\subseteq V: v\in span_{\M}(T)} \min_{u\in T}f(u).  \]

Otherwise put, 
\begin{equation*}
	f^\M(v) \ge t \Leftrightarrow v \in span_\M\{u \mid f(u) \ge t\}.
\end{equation*}


In particular, for $A\subseteq V$ we have   
$\mathbf{1}_A^\M= \mathbf{1}_{span_\M(A)}$.
A function~$f$ is said to be \emph{$w$-spanning} if $f^\M \ge w$. Let $\ctau^*_w(\M)=\min\{f[V]: f^\M \ge w\}$. A folklore result (see for example~\cite{matcomp}) is  $\ctau^*_w(\M)=\cnu_w(\M)$.

\begin{lemma}\label{lemma:independentvector}
	Let $\M$ be a matroid, $\vx \in P(\M)$ and $f \in \rvp$. Then $f^\M \cdot \vx \le f[V]$.
\end{lemma}

\begin{proof}
	By the definition of $P(\M)$    it suffices to prove the case that $x=\mathbf{1}_{S}$ for $S\in \M$. Assume $Im(f)=\{a_1,\dots, a_p\}$ for $0\le a_1<\cdots <a_p$.
	For~$\ell\in [p]$, let $Z_{\ell}=f^{-1}(a_\ell)$ and $U_\ell=span_\M(\cup_{\ell\le t\le p}Z_t)$.
	We prove the theorem by induction on~$p$. 
	
	When $p=1$, $f^\M\cdot x= a_1|S\cap U_1|$. Since $S\in \M$ and $U_1=span_\M(Z_1)$, 
	\[|S\cap U_1|\le  rank_\M(S\cap U_1)\le |Z_1|,\] 
	which implies that $f^\M\cdot x\le a_1 |Z_1|= f[V]$.
	
	For $p\ge 2$, define a function~$f_1$ such that for $v\in \cup_{\ell=1}^{p-1}Z_\ell$, $f_1(v)=f(v)$, and for $v\in Z_p$, $f_1(v)=a_{p-1}$.  Then
	\[f^{\M}\cdot x=f_1^\M\cdot x+ (a_p-a_{p-1})|S\cap U_p|. \]
	By induction hypothesis, $f_1^\M\cdot x\le f_1[V]$. On the other hand, $(a_p-a_{p-1})|S\cap U_p|\le (a_p-a_{p-1})rank_\M(U_p)\le (a_p-a_{p-1})|Z_p|$. Combining these implies that
	$f^\M\cdot x \le f_1[V]+(a_p-a_{p-1})|Z_p|=f[V]$.
\end{proof}
The concept of $w$-spanning has a cooperative version in ~$\D^k$. 
Given $\EL=\{\M_1,\dots,\M_k\}\in \D^k$, a $k$-tuple $(f_1, f_2, \dots ,f_k)$ of functions in $\rvp$ is a \emph{matroidal fractional $\vec{w}$-cooperative cover} if $\sum_{i=1}^kf_i^{\M_i}(v)\ge w(v)$ for every $v\in V$.
The \emph{matroidal fractional $\vec{w}$-covering number} $\ctau^*_w(\EL)$ is the minimum of $\sum_{i=1}^kf_i[V]$ over all matroidal fractional $\vec{w}$-cooperative covers $(f_1, f_2, \dots ,f_k)$.
The \emph{matroidal cooperative covering $\vec{w}$-number}~$\ctau_w(\EL)$ is the minimum of $\sum_{i=1}^kf_i[V]$ over all fractional $\vec{w}$-cooperative covers $(f_1, f_2, \dots f_k)$, where each~$f_i$ has integral values.
When $\vec{w}=\vec{1}$, we omit the subscript~$\vec{1}$. 
\begin{remark}\label{rmk:naming}
	The notational rule here and later is that a parameter~$z_w^*$ stands for the fractional $w$-weighted version, and~$z_w$ stands for the integral $w$-weighted version. We omit the mention of~$w$ when $\vec{w}=\vec{1}$.
\end{remark}

\begin{remark}
	This deviates from the notation in~\cite{matcomp}, where our~$\ctau_w^*$ is denoted by~$\tau_w$. Please also note the difference between this and fractional weighted covering and matching numbers in hypergraphs (see~\refD{def:fractionalmatching}),  where the  weights are on edges, rather than on vertices.  
\end{remark}


The \emph{matroidal fractional $w$-matching number}~$\cnu^*_w(\EL)$ is the maximum of~$\vx \cdot \vec{w}$ over all~$\vec{x}\in R(\EL)$.  
$\cnu_w(\EL)$,~$\cnu^*(\EL)$, and~$\cnu(\EL)$ are similarly defined.



When $w\equiv 1$ and the functions are integral we have $\cnu^*=\cnu$ and $\ctau^*=\ctau$, as defined in the introduction.

\begin{theorem}\label{thm:nu*w=tau*w} 
	For every $\EL \in \D^k$ we have   $\ctau^*_w(\EL)=\cnu^*_w(\EL)$.
\end{theorem}

\begin{proof}
	The inequality $\ctau^*_w(\EL)\ge\cnu^*_w(\EL)$ is a straightforward corollary of \refL{lemma:independentvector}: 
	Indeed, let $\vx \in R(\EL)=\cap_{i=1}^kP(\M_i)$ and let $(f_1, \ldots,f_k)$ be a matroidal fractional $w$-cooperative cover (so that $\sum_{i=1}^k f_i^{\M_i}\ge w$)   satisfying 
	$\sum_{i=1}^k f_i[V]=\ctau^*_w(\EL)$. Then by~\refL{lemma:independentvector},
	\[ w\cdot x \le \sum_{i=1}^k f^{\M_i}\cdot x \le \sum_{i=1}^k f_i[V]=\ctau^*_w(\EL).   \]
	As this is true for all $x\in R(\EL)$, it follows that $\cnu^*_w(\EL)\le \ctau^*_w(\EL)$.

	For the proof of the inverse inequality, note that by \refT{thm:qisp} the dual program of~$\cnu^*_w(\EL)$ is 
	\[\cnu_w^*(\EL)=\min \sum_{1\le i\le k, U\subseteq V}rank_{\M_i}(U)y_i(U)\] 
	subject to
	\begin{align*}
		\sum_{1\le i\le k, U\subseteq V:v\in U }y_i(U)\ge w(v) \text{ for every $v\in V$},\\
		y_i(U)\ge 0 \text{ for every $U\subseteq V$ and $1\le i\le k$.}
	\end{align*}
	Let $(y_1,\dots, y_k)$ be a feasible solution of the dual LP, minimizing
	\begin{equation}\label{eq:newcheck}
		\sum_{1\le i\le k, U\subseteq V} y_i(U) \cdot |U| |V\setminus U|. 
	\end{equation}
	Let $\mathcal{S}_i\subseteq 2^V$ be the support of $y_i$ for $1\le i\le k$. 
	We claim that for every $i\in [k]$, the collection $\mathcal{S}_i$ is a chain, i.e., for two distinct $S,T\in \mathcal{S}_i$, either $S\subseteq T$ or $T\subseteq S$.
	Suppose not, i.e., there exist $S,T\in\mathcal{S}_i$ such that~$S$ and~$T$ are incomparable in inclusion relation. Let $\alpha:=\min \Big( y_i(S),\; y_i(T)\Big)$ and define
	\begin{equation*}
		y^*_i(U)=\begin{cases}
			y_i(U)-\alpha & \text{ if $U\in \{S, T\}$,}\\
			y_i(U)+\alpha & \text{ if $U\in\{S\cap T, S\cup T \}  $,}\\
			y_i(U) &\text{ otherwise.}
		\end{cases}
	\end{equation*}
	Since
	\[ \mathbf{1}_{S}+\mathbf{1}_T=\mathbf{1}_{S\cap T}+\mathbf{1}_{S\cup T}, \]
	$(y_1,\dots, y_{i-1},y^*_i,y_{i+1},\dots, y_k)$ remains a feasible solution of the dual LP. And since
	\[ rank_{\M_i}(S)+rank_{\M_i}(T)\ge rank_{\M_i}(S\cap T) + rank_{\M_i}(S\cup T), \]
	it remains optimal. However it is easy to check that for the incomparable~$S$ and~$T$ (see, e.g.,~\cite[Theorem 2.1]{SchrijverCO}) 
	\[ |S||S^c|+|T||T^c|> |S\cap T| |(S\cap T)^c| +|S\cup T| |(S\cup T)^c|, \]
	where $U^c:=V\setminus U$, therefore the new optimal solution has a smaller sum~\eqref{eq:newcheck}, a contradiction. Therefore $\mathcal{S}_i$ is a chain.
	
	Assume $S_{i,1}\subseteq \cdots\subseteq  S_{i,\ell_i}$ is the chain in~$\mathcal{S}_i$. By the independence augmentation
	property of~$\M_i$, there exists a chain of independent sets $I_{i,1}\subseteq \cdots \subseteq I_{i,\ell_i}$ of~$\M_i$ such that $I_{i,t}\subseteq S_{i,t}$ and $rank_{\M_i}(S_{i,t})=|I_{i,t}|$ for each $t\in [\ell_i]$.
	We set 
	\[f_i(v):= \sum_{t=1}^{\ell_i}1_{\{v\in I_{i,t}\}}y_i(S_{i,t}), \]
	where $1_{\{v\in I_{i,t}\}}$ is the characteristic function of the event $\{v\in I_{i,t}\}$. Then
	\[f_i[V]= \sum_{t=1}^{\ell_i} rank_{\M_i}(S_{i,t})y_i(S_{i,t}) =\sum_{U\subseteq V} rank_{\M_i}(U)y_i(U), \]
	therefore 
	\[\sum_{i=1}^k f_i[V]=\cnu^*_w.\]
	It remains to prove that $(f_1,\dots, f_k)$ is a matroidal fractional $\vec{w}$-cooperative cover. Indeed, for any $v\in V$ and $i\in [k]$, in the chain $S_{i,1}\subseteq \cdots \subseteq S_{i,\ell_i}$ of~$\mathcal{S}_i$, assume~$\ell \in [\ell_i]$ is the minimum index such that $v\in S_{i,\ell}$. Then 
	\begin{equation}\label{eq:eachterm}
		\sum_{U\subseteq V: v\in U}y_i(U)=\sum_{\ell \le t\le \ell_i}y_i(S_{i,t}). 
	\end{equation}
	On the other hand, $v\in S_{i,\ell} \subseteq  span_{\M_i}(I_{i,\ell})$ and $\min_{u\in I_{i,\ell}}f(u)\ge \sum_{\ell\le t\le \ell_i}y_i(S_{i,t})$ by the choice of the chain~$(I_{i,t})_{t}$ and the definition of~$f_i$. Therefore
	\[ f^{\M_i}(v)\ge \sum_{\ell\le t\le \ell_i}y_i(S_{i,t}), \]
	which together with~\eqref{eq:eachterm} implies
	\[ \sum_{i=1}^k f^{\M_i}(v) \ge \sum_{1\le i\le k, U\subseteq V, v\in U}y_i(U)\ge w(v).  \]
	This completes the proof that $\ctau^*_w \le \cnu^*_w$, and thereby the theorem.
\end{proof}

\begin{corollary}\label{cor:nuwtotauw}
	$\cnu_w(\EL)\le \cnu^*_w(\EL)=\ctau^*_w(\EL)  \le \ctau_w(\EL)$.  
\end{corollary}

The gaps between these parameters are closely related to the ratios between the polytopes, as reflected in the following two results.


\begin{theorem}\label{thm:ratioRP=igap}
	$R(\EL):P(\bigcap\EL)=\max_{w\in\rvp}\ctau_w^*(\EL):\cnu_w(\EL)$.
\end{theorem}

\begin{proof}
	Let $R=R(\EL)$ and $P=P(\bigcap\EL)$. 
	Let $\lambda=\max_{w\in\rvp}\ctau_w^*(\EL):\cnu_w(\EL)$ and $\gamma = R(\EL):P(\bigcap\EL)$.
	
	Let us first show that $\gamma\le \lambda$.
	Assume for contradiction that $\gamma > \lambda.$ This means that there exists a vector $\vz\in R(\EL)\setminus \lambda P(\bigcap\EL)$. 
	We may clearly assume that each coordinate of~$\vz$ is strictly positive: suppose not, say $\vz(j)=0$.
	Since~$R$ is convex and $\{j\}\in\bigcap\EL$,
	then $\alpha \vz +(1-\alpha)\mathbf{1}_{\{j\}} \in R$ for every $\alpha\in [0,1]$. Since $\mathbf{1}_{\{j\}}\in \lambda P$ (as $\lambda \ge 1$) and~$\lambda P$ is closed, then there exists $\alpha_0\in (0,1)$ such that $\alpha_0 \vz +(1-\alpha_0)\mathbf{1}_{\{j\}} \in R\setminus\lambda P$.

	On the other hand, the convex polytope~$P$ is the intersection of a collection of half-spaces $\{ \vx \mid \vw_i\cdot \vx\le a_i \}$ for $i\in [m]$. Thus there exists some~$i$ such that $\vw_i\cdot \vz >a_i$.
	Since $\{j\}\in \bigcap\EL$ for every~$j\in V$, $\lambda P$ contains a standard simplex, so there exists some $t\in  (0,1)$ such that~$t\vz\in \lambda P$ and $\vw_i\cdot t\vz = a_i$. Then $\vw_i\ge 0$: otherwise suppose $\vw_i(j)<0$ for some~$j$, then $\vw_i\cdot (t\vz-se_j)> a_i$ for every $0<s<(t\vz)(j)$, contradicting the fact that~$\lambda P$ is closed down.

	The above argument implies $\ctau^*_{w_i}(\EL)=\cnu^*_{w_i}(\EL)> \lambda \cnu_{w_i}(\EL)$ for some $w_i\ge 0$, a contradiction to the choice of~$\lambda$.
	
	To show the inverse inequality,  
	for any~$w\in\rvp$, let $x\in R(\EL)$ be a matroidal fractional matching satisfying $w\cdot x=\cnu^*_w(\EL)$. Then by the assumption $\frac{x}{\gamma}\in P(\C)$ for $\C=\bigcap\EL$, which means there exists a collection of $\{S_i\in \C:i\in I\}$ such that $\frac{x}{\gamma}=\sum_{i\in I}\lambda_i \mathbf{1}_{S_i}$. We take $S_{t}$ in the collection such that $\mathbf{1}_{S_t}\cdot w =\max_{i\in I}\mathbf{1}_{S_i}\cdot w$. Therefore
	\[\frac{\cnu^*_w(\EL)}{\gamma}= \frac{x\cdot w}{\gamma}= \sum_{i\in I}\lambda_i\mathbf{1}_{S_i}\cdot w\le \mathbf{1}_{S_t}\cdot w\le \cnu_w(\EL), \]
	as desired.
\end{proof}

Next we compare $R(\EL)$ with $Q(\bigcap\EL)$.

\begin{theorem}\label{thm:ratioRQ=ratio}
	For every $k$ and~$\EL=\{\M_1,\dots,\M_k \}\in \D^k$, 
	\[R(\EL) : Q(\bigcap\EL)=\max_{U:U\subseteq V}\cnu^*(\EL_U):\cnu(\EL_U),\]
	where $\EL_U=\{\M_1[U],\dots,\M_k[U] \}$.
\end{theorem}
\begin{proof}
	Let $\EL=\{\M_1,\dots,\M_k\}$ and $\lambda =\max_{U:U\subseteq V}\cnu^*(\EL_U):\cnu(\EL_U)$. 
	\begin{claim}
		$R(\EL) : Q(\bigcap\EL)\le \lambda$.
	\end{claim}
	\begin{proof}[Proof of the claim]
		Let ~$x\in R(\EL)$.     We need to show that $x\in \lambda Q(\C)$, where $\C=\bigcap\EL$, namely 
		\begin{equation}\label{eq:desiredone}
			x[U]\le \lambda\cdot rank_\C(U)
		\end{equation}
		for  every $U\subseteq V$.

		Fix some $U\subseteq V$ and assume $\EL_U=\{\N_1,\dots,\N_k\}$, where $\N_i=\M_i[U]$. We define a vector~$\bar{x}\in\rvp$ as the following:
		$\bar{x}(v)=x(v)$ for $v\in U$ and $\bar{x}(v)=0$ for $v\in V\setminus U$. Then $x\in R(\EL)=\cap_{i=1}^kQ(\M_i)$ implies that for every $W\subseteq V$,
		\[ \bar{x}[W] = x[W\cap U]\le \min_{1\le i\le k} rank_{\M_i}(W\cap U)=\min_{1\le i\le k} rank_{\N_i}(W).  \]
		Thus $\bar{x}\in R(\EL_U)=\cap_{i=1}^kQ(\N_i)$. By the assumption of~$\lambda$, 
		$\cnu^*(\EL_U)\le \lambda \cnu(\EL_U)=\lambda \cdot rank_\C(U)$, and then
		\[ x[U]=\bar{x}[U]\le \cnu^*(\EL_U)\le \lambda \cnu(\EL_U)=\lambda\cdot rank_\C(U), \]
		as desired in~\eqref{eq:desiredone}.
	\end{proof} 
	Let $\gamma=R(\EL) : Q(\bigcap\EL)$. 
	\begin{claim}
		$\max_{U:U\subseteq V}\cnu^*(\EL_U):\cnu(\EL_U)\le \gamma$, i.e.,
		\begin{equation}\label{eq:desiredtwo}
			\cnu^*(\EL_U)\le\gamma\cnu(\EL_U)
		\end{equation}
		for every~$U\subseteq V$.  
	\end{claim}
	\begin{proof}[Proof of the claim]
		Fix $U\subseteq V$ and let $x\in R(\EL_U)$ satisfying $x[V]=\cnu^*(\EL_U)$. By the construction of~$\EL_U$, for every $W\subseteq V$,
		\[ x[W]=x[W\cap U]\le \min_{1\le i\le k}rank_{\M_i[U]}(W\cap U)\le \min_{1\le i\le k}rank_{\M_i}(W), \]
		which implies $x\in R(\EL)=\cap_{i=1}^kQ(\M_i)$. Then by the assumption,~$\frac{x}{\gamma}\in Q(\bigcap\EL)$.
		Therefore for every $W\subseteq V$,
		\[ \frac{x[W]}{\gamma}=\frac{x[W\cap U]}{\gamma}\le rank_{\bigcap\EL}(W\cap U)= rank_{\bigcap\EL_U}(W), \]
		which implies $\frac{x}{\gamma}\in Q(\bigcap\EL_U)$.
		Therefore
		\[ \frac{\cnu^*(\EL_U)}{\gamma}=  \frac{x[V]}{\gamma}\le rank_{\cap\EL_U}(V)=\cnu(\EL_U),  \]
		as desired in~\eqref{eq:desiredtwo}.  
	\end{proof}
	This completes the proof of the theorem.
\end{proof}


By~\refT{thm:ratioRQ=ratio},~\refCon{conj:matroidalryser} implies the following.
\begin{conjecture}\label{conjectureR:Q}
	If $\EL\in \D^k$, then
	\[   R(\EL):Q(\bigcap\EL)\le k-1.   \]
\end{conjecture}
At the risk of tiring the reader (no coercion to read) we mention two further generalizations, and  their fractional cooperative covers counterparts. 

\begin{conjecture}\label{conj:fksmatroid}
	For any $\EL\in \D^k$ 
	\[ \ctau_w^*(\EL)\le (k-1)\cnu_w(\EL).\]
\end{conjecture}
Or even stronger. 
\begin{conjecture}\label{conj:fksmatroid_integral} 
	If  $\vec{w}\in \mathbb{Z}^{V}_{\ge 0}$, then 
	\[ \ctau_w(\EL)\le (k-1)\cnu_w(\EL).\]
\end{conjecture}

By~Theorems~\ref{thm:ratioRP=igap} and~\ref{thm:nu*w=tau*w}, \refCon{conj:fksmatroid} is equivalent to~\refCon{conj:k-1PR}, and thus to~\refCon{conj:chi*kint}. Thus~\refCon{conj:fksmatroid} is stronger than \refCon{conjectureR:Q}, but~\refCon{conj:fksmatroid} is not comparable to~\refCon{conj:matroidalryser}. 
The ratio $\ctau_w^*(\EL)/ \cnu_w(\EL) =\cnu^*_w(\EL)/\cnu_w(\EL)$ is called the ``integrality gap'' of the linear program, so the conjecture is that the gap does not exceed $k-1$. \refCon{conj:fksmatroid} is true when $k\in\{2,3\}$ by~\refT{thm:edmonds2matroidintersection} and~\cite[Theorem 1]{linhares2020approximate}.
The proof for $k=2$ case (see e.g.,~\cite[Corolloary 41.12c]{SchrijverCO})  yields 
the validity of ~\refCon{conj:fksmatroid_integral} for this $k$.


We conclude this section with partial results on these conjectures.
\subsection{Weaker versions of  \refCon{conj:fksmatroid} and~\refCon{conj:fksmatroid_integral}}

\begin{theorem}\label{thm:tauw*knuw}
	For any $\EL\in \D^k$,   
	$ \ctau_w^*(\EL)\le k\cnu_w(\EL)$. When $w\in\mathbb{Z}^V_{\ge 0}$,  $ \ctau_w(\EL)\le k\cnu_w(\EL)$.
\end{theorem}
\begin{proof}
	Assume $\EL=\{\M_1,\cdots,\M_k\} \in \D^k$.  Let $I\in\M_1\cap\cdots\cap\M_k$ satisfy $w[I]=\cnu_w(\EL)$ and $w(u)>0$ for each $u\in I$. For each $1\le i\le k$, we set $f_i(x)=w(x)$ if $x\in I$ and $f_i(x)=0$ if $x\in V\setminus I$. Then $\sum_{i=1}^k f_i[V]=k\cnu_w$. It remains to verify that $\sum_{i=1}^k f_i^{\M_i}(v)\ge w(v)$ for each $v\in V$.
	Suppose not. Then there exists $v\in V$ such that $\sum_{i=1}^k f_i^{\M_i}(v)<w(v)$. Since for $x\in I$, $\sum_{i=1}^k f_i^{\M_i}(x)\ge \sum_{i=1}^k f_i(x)= kw(x)$, this~$v$ is not in $I$. Let $J$ be the set of indices $j$ such that $v\in span_{\M_j}(I)$. Then  $\sum_{i=1}^k f_i^{\M_i}(v) =\sum_{j\in J}f_j^{\M_j}(v)<w(v)$. It means for each $j\in J$ there exists circuit $C_j$ in $\M_j$ and $u_j\in C_j\setminus \{v \}$ such that $v\in C_j$ and $C_j\setminus\{v\}\subseteq I$, which satisfies
	\begin{equation}\label{eq:weightwuj}
		\sum_{j\in J}w(u_j)<w(v).
	\end{equation}
	For $j\in J$, 
	since $|C_j\setminus\{u_j\}|\le |I|$, by the independence augmentation axiom of~$\M_j$,~$C_j\setminus\{u_j\}$ can extend to $I\cup\{v\}\setminus\{u_j\}\in\M_j$. For $t\not\in J$, since $v\not\in span_{\M_t}(I)$, we have $I\cup\{v\}\in\M_t$. Therefore 
	\[I':=I\cup\{v\}\setminus \{u_j\mid j\in J\}\in \cap_{i=1}^k\M_i=(\cap_{j\in J}\M_j)\cap(\cap_{t\in [k]\setminus J}\M_t).\]
	But by~\eqref{eq:weightwuj}, $w(I')\ge w(I)-\sum_{j\in J}w(u_j)+w(v)>w(I)$,
	contradictory to the maximum weight assumption of~$I$, which completes the proof of the first part of the theorem.
	
	The same proof works when $w\in\mathbb{Z}^V_{\ge 0}$, in which case $(f_1,\dots, f_k)$ is an integral $w$-cooperative cover.
\end{proof}
By \refT{thm:ratioRP=igap} and~\refC{cor:Pchiequiv}, the theorem is equivalent to the following.
\begin{corollary}\label{cor:kRPratio}
	For any $\EL\in \D^k$,
	$R(\EL):P(\bigcap\EL) \le k$. 
\end{corollary}

\begin{corollary}\label{cor:kchi*}
	For any $\EL=\{\M_1, \cdots,\M_k\} \in \D^k$ and $\C=\bigcap\EL$, 
	\[\chi^*(\C,\vh)\le k\max_{1\le i\le k}\chi^*(\M_i,\vh).\] 
\end{corollary}

\begin{example}
	The following example shows the necessity of the requirement that $\M_1,\cdots,\M_k$ are matroids. Let $\C_1$ be the collection of rows and their subsets in the $n\times n$ grid, and ~$\C_2$  the collection of columns and their subsets.  Then $(P(\C_1)\cap P(\C_2)):P(\C_1\cap \C_2)= n$.
\end{example}


\subsection{The intersection of partition matroids }\label{subsec:partition}

\begin{definition}\label{def:fractionalmatching}
	Let $\ch$ be a hypergraph and $w: \ch\rightarrow \mathbb{R}_{\ge 0}$ be a function.
	\hfill
	\begin{enumerate} 
		\item    
		A \emph{fractional matching} in the hypergraph~$\ch$ is a function $g:\ch\rightarrow\mathbb{R}_{\ge 0}$ satisfying  \[\sum_{e:v\in e}g(e)\le 1\] for every $v\in V(\ch)$. 
		By $\nu^*_w(\ch)$ we denote the maximum value $\sum_{e\in\ch}w(e) g(e)$ over all fractional matchings~$g$. Furthermore~$\nu_w(\ch), \nu(\ch)$ are defined according to~\refR{rmk:naming}. 
		
		\item A {\em fractional $w$-cover} is a non-negative function~$t$
		on~$V(\ch)$ satisfying \[\sum_{v \in e}t(v)\ge w(e)\] for all $e \in \ch$. By $\tau^*_w(\ch)$ we denote the minimum~$g[V]$ over all fractional~$w$-covers~$g$. Furthermore~$\tau_w(\ch), \tau(\ch)$ are defined according to~\refR{rmk:naming}. 
	\end{enumerate}
\end{definition}

The following is a weighted generalization of Ryser's conjecture.

\begin{conjecture}[Weighted Ryser]\label{conj:weightedryser}
	Let $\ch$ be a $k$-partite hypergraph, and let $w$ be a non-negative integral function on $E(\ch)$. Then $\tau_w(\ch) \le (k-1)\nu_w(\ch)$.
\end{conjecture}
The conjecture was proved in~\cite{ryser3} in the case $k=3$ and $w \equiv 1$, using 
\refT{thm:tophall}. A fractional version of the weighted case was proved in \cite{furedikahnseymour}. 
So far, topological methods haven't been successfully applied to the weighted case.

F\"uredi proved a fractional version for general~$k$.

\begin{theorem}[\cite{furedi1981maximum}]\label{thm:furedi}
	In a $k$-partite hypergraph $\nu^*(\ch)\le (k-1)\nu(\ch)$.
\end{theorem}

As a  treat,  a short topological proof by Ori Kfir is given in~\refApp{app:Ori}.
F\"uredi, Kahn, and Seymour proved a weighted  version of F\"uredi's theorem: 

\begin{theorem}[\cite{furedikahnseymour}]\label{thm:FKS}
	If $\ch$ is $k$-partite then for every $w:\ch\rightarrow\mathbb{R}_{\ge 0}$, $\nu^*_w(\ch)\le (k-1) \nu_w(\ch)$.
\end{theorem}

\begin{observation}\label{ob:cnu=nupartition}
	If  $\EL=\{\M_1,\dots,\M_k\}\in\D^k$ is a $k$-set of partition matroids and $\ch=\K(\EL)$, then
	\[ \ctau^*_w(\EL)=\tau^*_w(\ch) \quad\text{and}\quad \cnu_w(\EL)=\nu_w(\ch).  \]
	And when $w\in\mathbb{Z}^V_{\ge 0}$, $\ctau_w(\EL)=\tau_w(\ch)$.
\end{observation}



Applying~\refT{thm:FKS} and~\refOb{ob:cnu=nupartition}, we prove~\refCon{conj:fksmatroid} and thus also~\refCon{conj:k-1PR} and~\refCon{conj:chi*kint}, for partition matroids.

\begin{corollary}
	If $\EL=\{\M_1,\dots,\M_k\}\in\D^k$  is a $k$-set of partition matroids, then 
	\[ \ctau_w^*(\EL)\le (k-1)\cnu_w(\EL).  \]
\end{corollary}






\begin{example}[Showing sharpness of~\refCon{conj:fksmatroid},~\refCon{conj:k-1PR}, and~\refCon{conj:chi*kint}]\label{ex:truncatedPP}
	The truncated projective plane~$T_k$ of uniformity~$k$ is the induced subhypergraph $P_k[V(P_k)\setminus\{v\}]$ for any vertex~$v$ of a projective plane~$P_k$ (assuming it exists).
	It is well-known that~$T_k$ is $k$-partite and each part is of size~$k-1$, each vertex is contained in~$k-1$ edges, $\nu(T_k)=1$, and $\nu^*(T_k)=\tau(T_k)=k-1$. Applying~\refOb{ob:cnu=nupartition} and~\refT{thm:nu*w=tau*w} proves the sharpness of~\refCon{conj:fksmatroid}, and hence of the other two conjectures.
	
\end{example}

\begin{remark} 
	By~\refProp{prop:every},
	\refR{ob:lambdaPnotQ} (or \refE{ex:PnotQpartition}) gives a complex~$\C$ such that $P(\C)\subsetneqq Q(\C)$ and $\C=\bigcap \EL_{1}$ for $\EL_1=\{\M_1,\dots, \M_{k_1}\}\in MINT_{k_1}$.
	\refE{ex:truncatedPP} gives a complex~$\D$ such that $Q(\D)\subsetneqq R(\EL_2)$ and $\D=\bigcap\EL_2$ for $\EL_2=\{\N_1,\dots,\N_{k_2}\}\in MINT_{k_2}$. Let $\B=\C*\D$. Then $\B= \bigcap\EL$ for a $(k_1+k_2)$-set~$\EL=\{\M_1',\dots, \M_{k_1}', \N_{1}',\dots, \N_{k_2}'  \}\in MINT_{k_1+k_2}$, where~$\M_i'=\M_i* 2^{V(\D)}$ and $\N_j'=2^{V(\C)}*\N_j$ for each $i\in [k_1]$ and $j\in [k_2]$.
	Then it is easy to see that $P(\B)\subsetneqq Q(\B) \subsetneqq R(\EL)$.
\end{remark}

\section{Fractional coloring and weighted topological expansion}\label{sec:weightedfractional}


The following was proved in~\cite{matcomp}.
\begin{theorem}\label{thm:chistardelta}
For a complex~$\C$,   $\chi^*(\C)\le \Delta(\C)$.
When~$\C$ is a matroid, equality holds. 
\end{theorem}

In this section,  we generalize this result  to vertex-weighted complexes.
For a complex~$\C$, let
\[\Delta(\C, \vh):=\max_{\emptyset\neq S\subseteq V(\C)}\frac{w[S]}{\min\Big(\eta(\C[S]),\; rank_\C(S)\Big)}.  \]
Note that $\Delta(\C)=\Delta(\C, \vec{1})$. 

\begin{remark}\label{rmk:MDelta}
For a loopless matroid $\M$, 
\[\Delta(\M, \vh)=\max_{\emptyset\neq S\subseteq V(\M)}\frac{w[S]}{rank_\M(S)}.  \]
\end{remark}

\refT{thm:chistardelta} is generalizable as follows. The proof involves carefully setting disjoint copies of the complexes and applying~\refT{thm:tophall}.

\begin{theorem}\label{thm:chi*com}
$\chi^*(\C,\vh)\le \Delta(\C,\vh)$. 
\end{theorem}
\begin{proof}
We have $\chi^*(\C,t\vh)=t\chi^*(\C,\vh)$ and $\Delta(\C,t\vh)=t\Delta(\C,\vh)$. Approximating $\vh$ by rational vectors we may therefore assume that~$\vh$ is integral, thus
$\Delta(\M,\vh)=\frac{p}{q}$ for integers $p\ge 0$ and $q>0$. Furthermore, we may assume that for every ~$v\in V$, $rank_\C(\{v\})=1$, since otherwise~$\Delta(\C,\vh)=\infty$ and the inequality holds.

Let~$W$ consist of ~$p$ disjoint copies of~$V(\C)$. We consider 
the join complex $\mathcal{D}=*_p\C$ on~$W$, and we consider a generalized partition matroid~$\N$ on~$W$ defined in the following way: for each~$v\in V(\C)$, the set~$W_v$ consisting of the~$p$ copies of~$v$ in~$W$ is a part, and the respective constraint parameter is~$w(v)q$.
Since $w(v)=\frac{w(v)}{rank_\C(\{v\})}\le \Delta(\C,\vh)=\frac{p}{q}$, we have $w(v)q\le p$. Thus~$\N$ is well-defined.

If there is a base~$S$ of~$\N$ that is in~$\D$, then~$S$ corresponds to~$p$ faces $S_1,\dots, S_p\in \C$ such that each $v\in V(\C)$ is in~$w(v)q$ of them. Then putting weight~$\frac{1}{q}$ on each of the faces~$S_1,\dots,S_p$ gives an $\vh$-fractional coloring of total weight $\frac{p}{q}=\Delta(\C,\vh)$, which completes the proof.

It remains to prove that there exists a base of~$\N$ belonging to ~$\D$.
Recall that the dual matroid $\N^*$ of $\N$ has the same ground set as~$\N$,  bases that are complements of bases  of~$\N$. Then~$\N^*$ is also a generalized partition matroid with parts~$(W_v)_{v\in V(\C)}$ and respective constraint parameters $(p-w(v)q)_{v\in V(\C)}$.

Let $U=\bigsqcup_2 W$ consist of two disjoint copies of~$W$, and consider the complex~$\D*\N^*$ on~$U$. For each $z\in W$, let~$U_z$ be the set of two copies of~$z$ in~$U$. If we can show that there exists a choice function $\phi: W\rightarrow\cup_{z\in W}U_z$ such that $\phi(z)\in U_z$ for each $z\in W$ and $Im(\phi)\in \D*\N^*$, then it corresponds to a set $A\subseteq W$ such that~$A\in\D$ and~$W\setminus A \in \N^*$. This implies that~$A$ contains a set~$S$ that is in~$\D$ and is a base in~$\N$. This proves the claim.

To show the existence of the choice function~$\phi$, we apply \refT{thm:tophall}.
The condition to be checked is that for every $X\subseteq W$,
\[  \eta\Big( (\D*\N^*)[\cup_{z\in X}U_z]\Big) \ge |X|. \]
By  \refProp{prop:etajoin}, 
\[\eta\Big( (\D*\N^*)[\cup_{z\in X}U_z]\Big)\ge  \eta(\D[X])+\eta(\N^*[X]) \]
so  it is enough to show that for every $X\subseteq W$,
\begin{equation}\label{eq:targetetabound}
	\eta(\D[X])+\eta(\N^*[X])\ge |X|.
\end{equation}

Fix some $X\subseteq W$.  For each~$v\in V(\C)$, let

\[ x(v)= |X\cap W_v| \]
be the number of copies of~$v$ occurring in~$X$. Clearly 
$0\le x(v)\le p$ and $\sum_{v\in V(\C)}x(v) =|X|$. 
For each $1\le i\le p$, let~$X_i$ be the intersection of~$X$ with the $i$th copy of~$V(\C)$. Therefore~$X$ is the disjoint union of~$X_i$'s.
By the definition of~$\Delta(\C,\vh)$, we have
\[  \frac{p}{q}= \Delta(\C,\vh) \ge \frac{w[X_i]}{\eta(\C[X_i])}    \]
so that $\eta(\C[X_i])\ge \frac{q}{p}w[X_i]$.
Again by \refProp{prop:etajoin},
\begin{equation}\label{eq:etaDX}
	\eta(\D[X])\ge \sum_{i=1}^p \eta(\C[X_i])\ge \sum_{i=1}^p\frac{q}{p}w[X_i]=\sum_{v\in V(\C)}\frac{q}{p}x(v) \cdot w(v). 
\end{equation}
On the other hand, by~\refProp{prop:etamatroid} 
\begin{equation}\label{eq:etaN*X}
	\begin{split}
		\eta(\N^*[X]) \ge rank_{\N^*}(X) &=\sum_{v\in V(\C)}\min\Big(p-w(v)q,\; x(v)\Big)\\
		&\ge \sum_{v\in V(\C)}(p-w(v)q)\frac{x(v)}{p}.
	\end{split}
\end{equation}
Combining~\eqref{eq:etaDX} and~\eqref{eq:etaN*X}, we have
\[ \eta(\D[X]) + \eta(\N^*[X]) \ge\sum_{v\in V(\C)}x(v) =|X|,   \]
which proves~\eqref{eq:targetetabound}.
\end{proof}

\begin{theorem}\label{thm:chi*Ch}
For any complex~$\C$,
\[\chi^*(\C,\vh)\ge\Delta_r(\C,\vh):=\max_{\emptyset\neq S\subseteq V}\frac{w[S]}{rank_\C(S)}.\]
\end{theorem}
\begin{proof}
Let $X\subseteq V$ satisfy $\frac{w[X]}{rank_\C(X)}=\Delta_r(\C,\vh)$. Let $f$ be any $\vh$-fractional coloring of~$\C$. Then
\[ \sum_{v\in X}\sum_{S\in \M:v\in S}f(S)\ge w[X].  \]
On the other hand, 
\[ \sum_{v\in X}\sum_{S\in \C:v\in S}f(S)\le \sum_{S\in \C} \sum_{v\in X\cap S}f(S)\le rank_\C(X)\sum_{S\in \C}f(S),  \]
which implies $\sum_{S\in\C}f(S)\ge \frac{w[X]}{rank_\C(X)}=\Delta_r(\C,\vh)$.
\end{proof}
Combining~\refT{thm:chi*com}, \refR{rmk:MDelta}, and~\refT{thm:chi*Ch}, we have the following result.
\begin{corollary}
For any matroid $\M$,
$\chi^*(\M,\vh)=\Delta(\M,\vh)$.
\end{corollary}


\section{Fractional list colorings of complexes}\label{sec:fractionallist}

We conclude with a combination of the  two themes of the paper, list colorings and fractional colorings. In \cite{ALON199731}  the notion of fractional list coloring of graphs was introduced. 
Here we generalize it  to complexes.
Given a complex $\C$, we say that $\C$ is \emph{$(a,b)$-colorable} for some inegers $a\ge b\ge 1$, if for $L_v=\{1,\dots, a\}$ for each $v\in V(\C)$, there are subsets $C_v\subseteq L_v$ with $|C_v|=b$ for each $v\in V(\C)$ such that $S_i:=\{v\in V(\C):i\in C_v\}$ is in $\C$ for each $i\in \{1,\dots, a\}$. Let $CL(\C):=\{(a,b):\C \text{ is $(a,b)$-colorable}\}$ and let the \emph{colorable ratio} be $clr(\C):=\inf\{\frac{a}{b}:(a,b)\in CL(\C)\}$.

Clearly, $(\chi(\C),1)\in CL(\C)$ so that $clr(\C)\le \chi(\C)$.

Given a complex $\C$, we say that $\C$ is \emph{$(a,b)$-choosable} for some inegers $a\ge b\ge 1$, if for any size~$a$ lists $(L_v:v\in V(\C))$, there are subsets $C_v\subseteq L_v$ with $|C_v|=b$ for each $v\in V(\C)$ such that $S_i:=\{v\in V(\C):i\in C_v\}$ is in $\C$ for each $i\in \cup_{v\in V(\C)}L_v$. Let $CH(\C):=\{(a,b):\C \text{ is $(a,b)$-choosable}\}$ and let the \emph{choice ratio} be $chr(\C):=\inf\{\frac{a}{b}:(a,b)\in CH(\C)\}$.

Clearly, $(\chi_\ell(\C),1)\in CH(\C)$, so  $chr(\C)\le \chi_\ell(\C)$.

\begin{theorem}\label{thm:fracchrclr}
For any complex~$\C$,
$\chi^*(\C)=clr(\C)=chr(\C)$.
\end{theorem}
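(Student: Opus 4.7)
The strategy is to prove the chain
\[\chi^*(\C)\le clr(\C)\le chr(\C)\le \chi^*(\C).\]
The inequality $\chi^*(\C)\le clr(\C)$ holds because, given an $(a,b)$-coloring with color sets $C_v\subseteq[a]$ of size $b$, the $a$ faces $S_i=\{v:i\in C_v\}\in\C$ with the uniform weight $f(S_i)=1/b$ form a fractional coloring of total mass $a/b$: every vertex $v$ is covered exactly $|C_v|/b=1$ time. Taking the infimum over $(a,b)\in CL(\C)$ gives the claim. The inequality $clr(\C)\le chr(\C)$ is immediate by applying $(a,b)$-choosability to the constant lists $L_v=[a]$.

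For the main inequality $chr(\C)\le\chi^*(\C)$, fix any rational $p/q$ with $p/q>\chi^*(\C)$; it suffices to show that $\C$ is $(pn,qn)$-choosable for every positive integer $n$ large enough that an optimal fractional coloring scales to integer weights with denominator $qn$, for then $chr(\C)\le p/q$ and letting $p/q\searrow\chi^*(\C)$ delivers the conclusion. Starting from a fractional coloring $f$ of $\C$ with $\sum_S f(S)\le p/q$ and $\sum_{S\ni v}f(S)\ge 1$, rescale by $qn$ and pad with copies of $\emptyset$ to obtain a multiset of faces $T_1,\dots,T_{pn}$ of $\C$ in which every $v\in V(\C)$ appears in at least $qn$ of them. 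Setting $I_v:=\{i:v\in T_i\}$, so $|I_v|\ge qn$, the $(pn,qn)$-choosability reduces, for arbitrary lists $(L_v)$ of size $pn$, to producing a function $\beta:\bigcup_v L_v\to[pn]$ with
\[|\{c\in L_v:\beta(c)\in I_v\}|\ge qn\qquad\text{for every }v\in V(\C).\]
Given such a $\beta$, choose $C_v$ to be any size-$qn$ subset of $\{c\in L_v:\beta(c)\in I_v\}$; then for each color $c$ one has $S_c=\{v:c\in C_v\}\subseteq T_{\beta(c)}\in\C$, whence $S_c\in\C$ by down-closure.

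Producing such a $\beta$ is a bipartite demand-cover question whose LP relaxation is feasible via the uniform assignment $y(c,i)=1/(pn)$: indeed $\sum_{c\in L_v,\,i\in I_v}y(c,i)=|L_v|\cdot|I_v|/(pn)\ge qn$ for every $v$. The principal obstacle is the LP-to-integer passage; the analogous statement for graphs is the theorem of Alon--Tuza--Voigt~\cite{ALON199731}, proved by a defect form of Hall's theorem on a bipartite graph that encodes the per-vertex demands. The plan is to adapt that argument to the abstract complex setting by constructing $\beta$ iteratively: at each step, identify a sub-family of colors together with a sub-family of the $T_i$'s satisfying a local Hall-type condition (certified by the uniform LP feasibility above), commit $\beta$ on those colors so as to meet a maximal set of currently unsatisfied vertex demands, and continue on the residual instance. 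The main technical delicacy is verifying that the inductive step can always be carried out; this relies on the fact that the multiset $T_1,\dots,T_{pn}$ supplies a coherent family of face-per-index assignments whose structure is preserved, after appropriate updates, when the instance is shrunk.
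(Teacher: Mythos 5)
Your first two inequalities ($\chi^*(\C)\le clr(\C)$ via uniform weights $1/b$, and $clr(\C)\le chr(\C)$ via constant lists) match the paper's argument. The reduction of the hard inequality $chr(\C)\le\chi^*(\C)$ to the multiset $T_1,\dots,T_{pn}$ with every vertex in at least $qn$ of the $T_i$ is also the same as the paper's. But the step that carries all the difficulty --- producing the assignment $\beta$ of colors to indices with $|\{c\in L_v:\beta(c)\in I_v\}|\ge qn$ for every $v$ --- is not proved; you explicitly defer it to an unspecified ``iterative Hall-type'' procedure whose inductive step you acknowledge you have not verified. This is a genuine gap, not a routine detail: your fractional certificate is only \emph{tight}-feasible (when $|I_v|=qn$ the uniform assignment gives expected coverage exactly $qn$), and tight fractional covering constraints do not round integrally in general. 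Concretely, for $p=2,q=1,n=1$ with $T_1=\{v,w,y\}$, $T_2=\{u,x,z\}$ and lists $L_v=L_u=\{a,b\}$, $L_w=L_x=\{a,c\}$, $L_y=L_z=\{b,c\}$, the required $\beta$ would be a proper $2$-coloring of a triangle on $\{a,b,c\}$, which does not exist; so any correct completion must genuinely exploit the largeness of $n$ (or extra slack), and nothing in your sketch does so.

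The paper resolves exactly this point differently: it enlarges the lists to size $a=(1+\epsilon)pt$ rather than $pt$, partitions the color universe randomly into $p$ classes $Z_1,\dots,Z_p$, and uses Chernoff concentration to guarantee $|L_v\cap Z_j|\ge t$ simultaneously for all $v,j$; the $\epsilon$ of slack is what makes the union bound work, and it costs nothing because $chr$ is an infimum. If you want to keep your deterministic formulation, you must either build in analogous slack (e.g., aim for $(pn,(1-\delta)qn)$-type selections, or lists longer than $pn$) or supply an actual rounding argument (a flow/degree-splitting argument on the bipartite demand graph could plausibly work for $n$ divisible by suitable quantities, but it has to be written out). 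As it stands the proposal establishes the two easy inequalities and only a plan for the third.
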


The proof is similar to that in ~\cite{ALON199731}. Details are given in \refApp{app:choose}.

\bigskip{\noindent\bf Acknowledgements.} 
We would like to thank the reviewers for their astute comments.

\small
\bibliographystyle{abbrv}

\normalsize

\appendix

\section{A topological proof of~\refT{thm:furedi}}\label{app:Ori}

As a treat, we give a nice proof of~\refT{thm:furedi},  by  Ori Kfir in his M.Sc thesis ~\cite{orithesis}. 

\begin{remark}
The proof yields the result also for ``bipartite" hypergraphs, namely those having a set that meets every edge at one vertex. This is not an innovation, since F\"uredi's original theorem contains also this case, because the condition it needs is non-containment of a projective plane, which obviously holds for ``bipartite" hypergraphs.  
\end{remark}

A standard  argument of adding    dummy elements yields a  deficiency  version of \refT{thm:tophall}.

\begin{theorem}\label{thm:deficiency}
Let $V_1, \ldots, V_m$ sets of vertices of a complex $\C$. If $\eta(\C[\cup_{i\in I}V_i]) \ge |I|-d$ for every $I \subseteq [m]$ then there exists a partial transversal of size $m-d$, i.e., a choice function $\phi$ from $S\subseteq [m]$ of size $(m-d)$ to $\bigcup_{i\in S}F_i$ , such that $\phi(i)\in F_i$ for all $i\in S$ and $Im( \phi) \in \C$.
\end{theorem}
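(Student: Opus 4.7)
The plan is to reduce to \refT{thm:tophall} via the standard ``dummy element'' argument, but with a refinement: we introduce one \emph{private} dummy per index together with a rank-$d$ matroid constraint that caps the total number of dummies in any face at $d$. Concretely, let $D=\{u_1,\ldots,u_m\}$ be a set of $m$ new vertices disjoint from $V(\C)$, let $U_{d,m}$ denote the uniform matroid of rank $d$ on $D$ (viewed as the complex of subsets of $D$ of size $\le d$), and set
\[
\C' := \C * U_{d,m}, \qquad V_i' := V_i \cup \{u_i\} \ \text{for each } i\in[m].
\]

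The main step is to verify the Hall hypothesis of \refT{thm:tophall} for the system $(V_i')_{i\in[m]}$ in $\C'$. For any $I\subseteq[m]$, since $V(\C) \cap D = \emptyset$, the induced subcomplex factors as
\[
\C'\Big[\bigcup_{i\in I}V_i'\Big] \;=\; \C\Big[\bigcup_{i\in I}V_i\Big] \,*\, U_{d,m}\big[\{u_i:i\in I\}\big].
\]
The restriction $U_{d,m}[\{u_i : i\in I\}]$ is a uniform matroid of rank $\min(d,|I|)$ on $|I|$ elements, so by \refProp{prop:etamatroid} its $\eta$ is at least $d$: equal to $d$ when $|I|>d$ (since no element is a co-loop of $U_{d,|I|}$), and equal to $\infty$ when $|I|\le d$ (since the restriction is then a free matroid, with every element a co-loop). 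Combining this with \refProp{prop:etajoin} gives
\[
\eta\Big(\C'\Big[\bigcup_{i\in I}V_i'\Big]\Big) \;\ge\; \eta\Big(\C\Big[\bigcup_{i\in I}V_i\Big]\Big) + d \;\ge\; (|I|-d) + d \;=\; |I|,
\]
as required.

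Next, \refT{thm:tophall} supplies a choice function $\phi:[m]\to\bigcup_i V_i'$ with $\phi(i)\in V_i'$ and $Im(\phi)\in\C'$. Decompose $Im(\phi) = A\sqcup B$ with $A\in\C$ and $B\in U_{d,m}$; in particular $|B|\le d$. The key feature of the private-dummy construction is that $u_j \in V_i'$ only when $i=j$, so $\phi(i)=u_j$ forces $i=j$. Hence $|\phi^{-1}(D)| = |\{i: \phi(i)=u_i\}| = |B| \le d$, and $S:=\phi^{-1}(V(\C))$ satisfies $|S|\ge m-d$. For $i\in S$ we have $\phi(i)\in V_i$ and $Im(\phi|_S) \subseteq A\in\C$; restricting $\phi$ to any $(m-d)$-subset of $S$ yields the required partial transversal.

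The only non-trivial calculation is the $\eta$-estimate in the first step. The use of one private dummy per index (in place of $d$ shared dummies) is what makes the bookkeeping clean: with a single shared pool of $d$ dummies one would still have $|B|\le d$, but no control over $|\phi^{-1}(D)|$, since the choice function from \refT{thm:tophall} is not guaranteed to be injective.
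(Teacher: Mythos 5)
Your proof is correct and is precisely the ``standard argument of adding dummy elements'' that the paper invokes without writing out: reduce to \refT{thm:tophall} by joining $\C$ with a rank-$d$ uniform matroid on dummies, use \refProp{prop:etajoin} and \refProp{prop:etamatroid} to verify the Hall condition, and discard the indices assigned to dummies. Your choice of one private dummy per index (rather than a shared pool) is exactly the right bookkeeping, since the choice function from \refT{thm:tophall} need not be injective.
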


\begin{theorem}[Theorem 7.8 in~\cite{orithesis}]
Let $\ch$ be a $k$-uniform hypergraph. Suppose the vertex set of~$\ch$ can be divided into~$U$ and~$U'$ such that for every edge $S\in\ch$, $|S\cap U|=1$. Then
\[ \nu^*(\ch)\le (k-1)\nu(\ch). \]
\end{theorem}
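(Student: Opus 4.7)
The plan is to apply the deficiency version of topological Hall (Theorem~\ref{thm:deficiency}) to the matching complex $\mathcal{M}(\ch)$, using the partition $(V_u)_{u \in U}$ of $E(\ch)$ defined by $V_u := \{e \in \ch : u \in e\}$. The 1-bipartite hypothesis guarantees that the $V_u$'s genuinely partition $E(\ch)$, so any choice function $\phi$ with $\phi(u) \in V_u$ and $\operatorname{Im}(\phi) \in \mathcal{M}(\ch)$ is automatically a matching of size equal to the cardinality of its domain. Hence a partial system of distinct representatives of size $|U|-d$ produces a matching of the same size, and the goal becomes to arrange $|U| - d \ge \nu^*(\ch)/(k-1)$.

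The deficiency condition to verify is $\eta(\mathcal{M}(\ch^I)) \ge |I| - d$ for every $I \subseteq U$, where $\ch^I := \ch[\bigcup_{u \in I} V_u]$. A useful elementary inequality is
\[
\nu^*(\ch^I) \;\ge\; \nu^*(\ch) - (|U| - |I|),
\]
obtained by restricting an optimal fractional matching on $\ch$ to $\ch^I$ and observing that at each $u \in U \setminus I$ one loses fractional weight at most $1$ (by the constraint $\sum_{e \ni u} g(e) \le 1$). The Aharoni--Berger--Meshulam inequality (Theorem~\ref{thm:abm}) then provides a starting topological lower bound $\eta(\mathcal{M}(\ch^I)) \ge \nu^*(\ch^I)/k$. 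Combining these two bounds via the deficiency Hall arithmetic, however, yields only a matching of size $\nu^*(\ch)/k$, which falls short of the target by a factor $k/(k-1)$.

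The main obstacle is closing this gap by exploiting the 1-bipartite structure. I emphasize that no pointwise improvement of the form $\eta(\mathcal{M}(\ch')) \ge \nu^*(\ch')/(k-1)$ is available --- the bipartite graph $K_{2,2}$ is a counterexample, with $\eta(\mathcal{M}(K_{2,2})) = 1$ but $\nu^*(K_{2,2})/(k-1) = 2$ --- so the improvement cannot come from a single uniform connectivity bound but must be woven into the deficiency averaging itself. The natural candidate strategy is to run the ABM-style averaging on the projected $(k-1)$-uniform hypergraph on $U'$ (where each edge $\{u\} \cup A \in \ch$ is replaced by the $(k-1)$-set $A$), and then transfer the improved bound back to $\mathcal{M}(\ch^I)$ via the Mayer--Vietoris-type inequality (Lemma~\ref{lemma:MV}), with careful bookkeeping at those $A$ that are images of several edges of $\ch$. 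Once this projection-and-lift step is made rigorous, the deficiency Hall hypothesis holds for $d = |U| - \nu^*(\ch)/(k-1)$, and Theorem~\ref{thm:deficiency} produces the required matching of size $\nu^*(\ch)/(k-1)$.
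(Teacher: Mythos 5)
There is a genuine gap, and your own example exposes it. Your endgame is to verify the hypothesis of \refT{thm:deficiency} for the complex $\M(\ch)$ with the sets $V_u=\{e\in\ch: u\in e\}$ and deficiency $d=|U|-\nu^*(\ch)/(k-1)$. Taking $I=U$, that hypothesis reads $\eta(\M(\ch))\ge \nu^*(\ch)/(k-1)$ --- which is exactly the ``pointwise improvement'' you correctly observe is false for $K_{2,2}$ (there $\eta(\M(K_{2,2}))=1$ while $\nu^*/(k-1)=2$). So the statement you promise to establish after the ``projection-and-lift step is made rigorous'' is not merely unproven; it is false, and no Mayer--Vietoris bookkeeping can rescue it as long as you insist on returning to the $k$-uniform complex $\M(\ch^I)$. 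The sketch of that step has no content that addresses this.

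The repair is to never come back to $\M(\ch)$: run the entire Hall argument inside the matching complex of the projected $(k-1)$-uniform hypergraph $\ch'=\{S\setminus U: S\in\ch\}$, with $V_u=\{S\in\ch': S\cup\{u\}\in\ch\}$. A partial choice function with pairwise disjoint images in $\ch'$ still lifts to a matching of $\ch$, because the chosen $u$'s are distinct and lie in $U$. In that complex your two ingredients do combine correctly: the restriction inequality gives $\nu^*(\G_I)\ge\nu^*(\ch)-|U\setminus I|$ for $\G_I=\bigcup_{u\in I}V_u$, and \refT{thm:abm} applied to the $(k-1)$-uniform $\G_I$ gives $\eta(\M(\G_I))\ge\nu^*(\G_I)/(k-1)\ge |I|-|U|+\nu^*(\ch)/(k-1)$, since $(|I|-|U|)(\tfrac{1}{k-1}-1)\ge 0$. (This direct verification is a legitimate alternative to the paper's route; the paper instead takes the extremal deficiency set $Z$, applies \refT{thm:abm} to $\bigcup_{w\in Z}V_w$, and uses LP duality to build a fractional cover of $\ch$ of weight $(k-1)(|U|-d)$, avoiding any a priori bound on $d$.) Either way, the factor $k-1$ enters only because the averaging is done on the $(k-1)$-uniform projection, and the Hall machinery must be applied there, not to $\M(\ch)$.
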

\begin{proof}
Let
$\ch'=\ch/U=\{S \setminus U \mid S \in\ch \}$, which is a $(k-1)$-uniform hypergraph.
For every $u \in U$ let $V_u=\{S \in \ch' \mid S \cup \{u\} \in \ch\}$. Let 
\[d =  \max_{W \subseteq U}\Big( |W| -\eta (\M[\cup_{w\in W}V_w])\Big),\]
and let $Z$ be the subset of $U$ at which the maximum is attained. Putting $W=\emptyset$ shows $d\ge 0$. By~\refT{thm:deficiency}, $\nu(\ch) \ge |U|-d$. On the other hand, by our choice, $\eta(\M[\cup_{w\in Z}V_w])= |Z|-d$. By \refT{thm:abm} $\eta(\M[\cup_{w\in Z}V_w])\ge \tau^*(\cup_{w\in Z}V_w).$ Hence 
\[(k-1)\eta(\M[\cup_{w\in Z}V_w])=  (k-1)(|Z|-d)\ge\tau^*(\cup_{w\in Z}V_w). \]
Let $f$ be a fractional cover of $\cup_{w\in Z}V_w$ of total weight at most $(k-1)(|Z|-d)$. Then  $f+\mathbf{1}_{U\setminus Z}$ is  a fractional cover of~$\ch$, which has total weight at most 
\[(k-1)(|Z|-d)+|U|-|Z|\le (k-1)(|Z|-d+|U|-|Z|)= (k-1)(|U|-d)=(k-1)\nu(\ch). \]
Therefore
$\nu^*(\ch)=\tau^*(\ch)\le (k-1)\nu(\ch)$.
\end{proof}

\section{Proof of~\refT{thm:fracchrclr}}\label{app:choose}

\begin{lemma}
For any complex $\C$, $clr(\C)\ge \chi^*(\C)$.
\end{lemma}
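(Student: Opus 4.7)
The plan is straightforward: I would show that any $(a,b)$-coloring of $\C$ can be converted into a fractional coloring of total weight $a/b$, which directly gives $\chi^*(\C) \le a/b$ for every $(a,b) \in CL(\C)$, and hence $\chi^*(\C) \le clr(\C)$.

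First I would unpack the definition. Fix $(a,b) \in CL(\C)$, so there exist subsets $C_v \subseteq \{1,\dots,a\}$ with $|C_v| = b$ for every $v \in V(\C)$ such that each set $S_i := \{v \in V(\C) : i \in C_v\}$ lies in $\C$, for $1 \le i \le a$. The critical observation is that for every vertex $v$, the number of indices $i$ with $v \in S_i$ is exactly $|C_v| = b$.

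Next, I would define $f : \C \to \mathbb{R}_{\ge 0}$ by setting $f(S) = \frac{1}{b} \cdot |\{i \in [a] : S_i = S\}|$ for each $S \in \C$ (so $f$ aggregates contributions from repetitions among the $S_i$'s). Then $\sum_{S \in \C} f(S) = a/b$, and for each $v \in V(\C)$,
\[
\sum_{S \in \C : v \in S} f(S) = \frac{1}{b}\,|\{i \in [a] : v \in S_i\}| = \frac{b}{b} = 1,
\]
so $f$ is a fractional coloring of $\C$. Therefore $\chi^*(\C) \le a/b$.

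Taking the infimum over all $(a,b) \in CL(\C)$ yields $\chi^*(\C) \le clr(\C)$. There is no real obstacle here; the lemma is essentially a direct translation between the integer-valued $(a,b)$-coloring formulation and the linear-programming formulation of $\chi^*$, exploiting that a $b$-fold proper cover of $V$ by $a$ faces is the same datum as a rational fractional coloring with denominator $b$ and total weight $a/b$.
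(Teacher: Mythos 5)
Your proposal is correct and follows essentially the same route as the paper's proof: both convert the $(a,b)$-coloring (with the all-$[a]$ lists) into a fractional coloring by putting weight $\tfrac{1}{b}$ times the multiplicity on each distinct $S_i$, using that every vertex lies in exactly $b$ of the $S_i$'s, giving total weight $a/b$. No issues.
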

The proof is essentially same as the proof of lower bound in~\cite[Section 4]{ALON199731}.
\begin{lemma}
For a complex $\C$, $chr(\C)\ge clr(\C)$.
\end{lemma}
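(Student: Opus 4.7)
The plan is straightforward: I would show the containment $CH(\C) \subseteq CL(\C)$ of sets of parameters, from which the inequality $chr(\C) \ge clr(\C)$ follows immediately, since $\inf A \ge \inf B$ whenever $A \subseteq B$.

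To establish $CH(\C) \subseteq CL(\C)$, suppose $(a,b) \in CH(\C)$, i.e., $\C$ is $(a,b)$-choosable. By definition, this means that for \emph{every} assignment of size-$a$ lists $(L_v : v \in V(\C))$ one can select $b$-subsets $C_v \subseteq L_v$ with the property that $S_i := \{v \in V(\C) : i \in C_v\} \in \C$ for each color $i \in \bigcup_{v \in V(\C)} L_v$. Now specialize to the particular assignment $L_v = \{1, \ldots, a\}$ for every $v \in V(\C)$: the conclusion of $(a,b)$-choosability applied to this constant assignment is precisely the condition appearing in the definition of $(a,b)$-colorability. Hence $(a,b) \in CL(\C)$.

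There is no real obstacle here; the lemma is essentially a definitional observation, paralleling (and generalizing to the fractional setting) the trivial inequality $\chi(\C) \le \chi_\ell(\C)$, which is the special case $b = 1$. The only thing to be careful about is that in the definition of $(a,b)$-choosability the index set of the faces $S_i$ is $\bigcup_v L_v$, which in the specialized assignment equals $\{1,\ldots,a\}$, matching the index set used in the definition of $(a,b)$-colorability exactly.
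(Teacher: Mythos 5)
Your proof is correct and is exactly the paper's argument: the paper's one-line proof is ``$(a,b)\in CH(\C)$ implies $(a,b)\in CL(\C)$,'' which you have simply spelled out by specializing the choosability condition to the constant list assignment $L_v=\{1,\dots,a\}$. Nothing further is needed.
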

We omit the straightforward proof.

\begin{lemma}
For a complex $\C$, $\chi^*(\C)\ge chr(\C)$.
\end{lemma}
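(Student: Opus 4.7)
The plan is to establish the remaining inequality $chr(\C) \le \chi^*(\C)$ by probabilistic rounding of an optimal fractional coloring, in the spirit of Alon--Tuza--Voigt~\cite{ALON199731}. Fix $\epsilon>0$; it suffices to produce integers $a,b\ge 1$ with $a/b\le \chi^*(\C)+\epsilon$ and $(a,b)\in CH(\C)$. Write $c:=\chi^*(\C)$, which is rational (it is the optimum of a rational LP), and set $a:=\lceil (c+\epsilon/2)b\rceil$ for a large $b$ to be fixed later, so that $a/b\le c+\epsilon$ once $b$ is large enough.

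Fix an optimal fractional coloring $\lambda:\C\to\R_{\ge 0}$, so that $\sum_S\lambda_S=c$ and $\sum_{S\ni v}\lambda_S\ge 1$ for each $v\in V$. Renormalizing yields a probability distribution $\mu_T:=\lambda_T/c$ on $\C$ with $\Pr_{S\sim\mu}[v\in S]\ge 1/c$ for every $v$. Given an arbitrary list system $(L_v)_{v\in V}$ with $|L_v|=a$, the plan is: independently, for each color $j\in J:=\bigcup_v L_v$, draw a face $S^{(j)}\sim \mu$, and put $C_v:=\{j\in L_v:v\in S^{(j)}\}$. For each fixed $v$ the indicators $(\one[v\in S^{(j)}])_{j\in L_v}$ are i.i.d.\ Bernoulli with success probability at least $1/c$, so $|C_v|$ has expectation at least $a/c \ge b(1+\epsilon/(2c))$.

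A Chernoff/Hoeffding estimate then yields $\Pr[|C_v|<b]\le\exp(-\delta a)$ for some constant $\delta=\delta(c,\epsilon)>0$. Since $V$ is fixed while $a\to\infty$ with $b$, a union bound over $V$ gives $\Pr[\,|C_v|\ge b\text{ for every }v\in V\,]>0$ once $b$ is large enough. Fix any realization $(S^{(j)})_{j\in J}$ in this event and trim each $C_v$ to a subset $C_v'\subseteq C_v$ of size exactly $b$. For every color $j\in J$ the resulting class $S_j':=\{v:j\in C_v'\}$ is a subset of $S^{(j)}\in\C$, hence lies in $\C$ by closure under subsets. This exhibits $(a,b)\in CH(\C)$, so $chr(\C)\le c+\epsilon$; letting $\epsilon\to 0$ gives $chr(\C)\le\chi^*(\C)$, which together with the two preceding lemmas yields $\chi^*(\C)=clr(\C)=chr(\C)$.

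The only real technical step is the Chernoff/union-bound calibration, and it is routine; the key structural point is that $|V|$ is fixed while $b$ is free, so concentration costs nothing in the ratio $a/b$. A purely combinatorial alternative would be to rationalize $\lambda$ with denominator $b$, unfold it as a multiset of faces $T^1,\ldots,T^a$ with each $v$ in at least $b$ of them, and seek an SDR $\phi:[a]\to J$ with $\phi(k)\in\bigcap_{v\in T^k}L_v$; however Hall's condition can fail already when two vertices carry disjoint lists yet a face of $\C$ contains both, so such a route would require splitting faces via a capacitated matching. The probabilistic argument above sidesteps this obstruction entirely, which is why I would prefer it.
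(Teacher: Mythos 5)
Your proof is correct and follows essentially the same route as the paper's: both rationalize the optimal fractional coloring, assign each color an independent random face drawn according to the (normalized) fractional coloring, apply Chernoff plus a union bound over the fixed vertex set, and finish with the closed-down property of $\C$. The only difference is presentational --- the paper partitions the color set into blocks indexed by the multiset of faces and concentrates each $|L_v\cap Z_j|$ separately, whereas you concentrate $|C_v|$ directly, which is slightly cleaner but not a genuinely different argument.
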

\begin{proof}
The first observation is that
the optimum of the linear program of fractional coloring can be obtained by some basic feasible solution~$f$, and since the constraints of fractional coloring are integral, by Cramer's rule, there exists 
a subset~$\C_0$ of~$\C$ and a positive integer~$q$ such that 
for each $T\in\C_0$, $f(T)=p_T/q$ for some positive integer~$p_T$, and for each $T\in\C\setminus\C_0$, $f(T)=0$. And
\[\chi^*(\C)=f[\C]=f[\C_0]=\frac{p}{q},\] where $p:=\sum_{T\in\C_0}p_T$. By taking each $T\in\C_0$ $p_T$ times, we obtain a collection of faces $\{T_1,\dots, T_p\}$ (not necessarily distinct) such that each vertex $v\in V(\C)$ is in at least $q$ many of them: 
\[|\{j:v\in T_j\}|=\sum_{T\in\C_0:v\in T}p_T=q\sum_{T\in\C:v\in T}f(T)\ge q,  \]
where the last inequality is by the constraint of the linear program.

Then to prove $\frac{p}{q}=\chi^*(\C)\ge chr(\C)$, we have to show that for every $\epsilon>0$ there exists $(a,b)\in CH(\C)$ such that $\frac{a}{b}\le (1+\epsilon)\frac{p}{q}$. Consider $a:=(1+\epsilon)pt$ and $b:=qt$, for $t$ sufficiently large, and we can assume without loss of generality that $a,b$ are integers.
For any size~$a$ lists $(L_v:v\in V(\C))$, we take a random partition $\cup_{v\in V(\C)}L_v=Z_1\cup\dots Z_p$ such that each color is included in each~$Z_j$ independently with probability~$1/p$.
Noting that~$p$ and~$|V(\C)|$ are fixed for any given~$\C$, it is followed by the well-known Chernoff's bounds that with positive probability, for each $v\in V(\C)$ and $j\in\{1,\dots,p\}$,
$|L_v\cap Z_j|$ is concentrated around its expectation $|L_v|/p=(1+\epsilon)t$. Thus we can take an $t$-subset $C_{v,j}$ of $L_v\cap Z_j$. Then we get a set
\[ C_v:=\cup_{1\le j\le p:v\in T_j}C_{v,j} \]
of size at least $qt=b$. 

Then for each color $i\in\cup_{v\in V(\C)}L_v$ and~$S_i=\{v\in V(\C)\mid i\in C_v\}$, if $i\not\in\cup_{v\in V(\C)}C_v$, then $S_i=\emptyset$, otherwise~$S_i\subseteq T_j$ for the index~$j$ satisfying~$i\in Z_j$. Since~$T_j\in\C$, in either case~$S_i\in \C$ and we prove~$(a,b)\in CH(\C)$.
\end{proof}

Combining the above lemmas, we have
\[\chi^*(\C)= chr(\C)=clr(\C), \]
which completes the proof of~\refT{thm:fracchrclr}.

\end{document}